\title[Aggregation model with fast diffusion on sphere]{Ground states and phase transitions for an aggregation model with fast diffusion on sphere}
\author[Fetecau]{Razvan C. Fetecau}
\address[Razvan C. Fetecau]{\newline Department of Mathematics, Simon Fraser University, 8888 University Dr., Burnaby, BC V5A 1S6, Canada}
\email{van@math.sfu.ca}
\author[Park]{Hansol Park}
\address[Hansol Park]{\newline Department of Mathematics, National Tsing Hua University, Section 2, Kuang-Fu Road, Hsinchu 30013, Taiwan}
\email{hansolpark@math.nthu.edu.tw}
\newtheorem{theorem}{Theorem}[section]
\newtheorem{lemma}{Lemma}[section]
\newtheorem{proposition}{Proposition}[section]
\newtheorem{remark}{Remark}[section]
\newcommand{\bbr}{\mathbb R}
\newcommand{\bbs}{\mathbb S}
\newcommand{\calA}{\mathcal{A}}
\newcommand{\calF}{\mathcal{F}}
\newcommand{\calP}{\mathcal{P}}
\newcommand{\Psym}{\mathcal{P}_{\mathrm{sym}}}
\def\d{\mathrm{d}}
\newcommand{\dS}{\mathrm{d}S} 
\newcommand{\dx}{\mathrm{d}S (x)}
\newcommand{\dy}{\mathrm{d}S (y)}
\newcommand{\rhou}{\rho_{\text{uni}}}
\newcommand{\rhoeps}{\rho^\epsilon}
\newcommand{\alphaone}{\tilde{\alpha}_{\kappa}}
\newcommand{\alphatwo}{\alpha_{\kappa}}
\newcommand{\brho}{\bar{\rho}}
\newcommand{\srho}{\bar{s}}
\newcommand{\dm}{d} 
\begin{document}

\subjclass[2020]{35A15, 35B38, 58K05, 82D60}
\keywords{free energy, ground states, fast diffusion, phase transitions, dipolar potential, polymer orientation}

\begin{abstract} We consider a free energy on the sphere that contains an entropy associated to nonlinear fast diffusion, and a nonlocal interaction energy. The two components of the free energy compete with each other, as one favours spreading and the other promotes concentration, respectively. The model is a generalization  of the Onsager free energy with dipolar potential, used to study polymer orientation. We study the global energy minimizers of the energy functional, and in particular the various phase transitions that occur with respect to the strength of the nonlocal attractive interactions. In the considered regime, diffusion reduces as the density increases, for which reason the global energy minimizers can contain Dirac mass concentrations. We identify various ranges of the fast diffusion exponent and of the interaction strength, which give qualitatively different equilibria and ground states. The theoretical results are supported by numerical illustrations.
\end{abstract}

\maketitle 


\section{Introduction}
\label{sect:intro}
In this paper we investigate the minimizers of the free energy functional
\begin{equation}
\label{energy-sphere}
E[\mu] = \frac{1}{m-1} \int_{\bbs^\dm} \rho(x)^m \dx + \frac{\kappa}{4} \iint_{\bbs^\dm \times \bbs^\dm} \| x-y \|^2 \d \mu (x) \d \mu (y)
\end{equation}
defined on the space $\mathcal{P}(\bbs^\dm)$ of probability measures on the unit sphere $\bbs^\dm$, where the integration is with respect to the  standard Lebesgue measure $\dS$ of the sphere, and $\rho \, \dS$ represents the absolutely continuous part of the measure $\mu$ (the notion of absolute continuity is with respect to the measure $\dS$). Also, $0<m<1$ is the diffusion exponent, $\kappa>0$ represents the interaction strength, and $\|\cdot\|$ denotes the Euclidean norm in $\bbr^{\dm+1}$. 

The energy functional \eqref{energy-sphere} consists of two terms: the first is the entropy and the second is a nonlocal interaction energy, with an interaction potential $W:\bbs^\dm \times \bbs^\dm \to \bbr$ given by $W(x,y) = \frac{1}{2} \| x-y \|^2$. The two terms in \eqref{energy-sphere} have competing effects, as the first favours spreading, while the second promotes aggregation. In its most general form, the energy \eqref{energy-sphere} can be set up on arbitrary Riemannian manifolds $M$, with general interaction potentials $W:M\times M \to \bbr$. There is an extensive literature on this class of aggregation-diffusion energies and associated equations, due to their wide range of applications in sciences and engineering; it is beyond our scope to give a comprehensive review of this literature, we simply refer here to \cite{BailoCarrilloCastro2024, CarrilloCraigYao2019, GomezCastro2024} for some recent reviews on the topic. For example, nonlinear diffusion was considered in the Keller-Segel model for chemotaxis \cite{CarrilloCalvez2006}, in swarming models \cite{BurgerDiFrancescoFranek, BuFeHu14, TBL}, granular media \cite{CaMcVi2006}, machine learning \cite{PeletierShalova2025}, and opinion formation \cite{FagioliRadici2021}.

Particularly relevant to the current work, the free energy \eqref{energy-sphere} for the special case $m=1$ (in this case the entropic term is given by $\int_{\bbs^\dm} \rho(x) \log \rho(x) \dx$), known as the Onsager free energy  \cite{Onsager1949}, has been used to study isotropic-nematic phase transitions in rod-like polymers \cite{ConstantinKevrekidisTiti2004, fatkullin2005critical}. In this application, $\rho$ represents the probability distribution function for the orientation of a polymer interpreted as a rigid rod of unit vector $x \in \bbs^2$. The quadratic interaction potential in \eqref{energy-sphere} (in fact, its equivalent expression $-\kappa \langle x, y\rangle $) is called the dipolar potential.  As shown in  \cite{fatkullin2005critical,FrouvelleLiu2012, DegondFrouvelleLiu2014}, there exists a critical value of the interaction strength $\kappa$ at which the isotropic state (provided by the uniform distribution) loses stability to a nematic equilibrium density. Similar phase transitions are investigated in the present paper, but with nonlinear diffusion replacing the linear diffusion. The current work complements the recent research on phase transitions with nonlinear diffusion in the porous media regime ($m>1$) done in \cite{FePaVa2025}.

The range $0<m<1$ of the diffusion exponent corresponds to the fast diffusion regime; this is in contrast with the range $m>1$ for the slow diffusion (porous media) regime. The terminology comes from the evolution equation (gradient flow) associated to the entropic term in \eqref{energy-sphere}. Indeed, in a suitable Wasserstein space, the gradient flow of the entropy component (the interaction term in \eqref{energy-sphere} is ignored here) leads to the nonlinear diffusion equation $\displaystyle \rho_t=\Delta \rho^m$ \cite{AGS2005}. The nonlinear diffusion $ \Delta \rho^m $ can be written as $\nabla \cdot (m\rho^{m-1}\nabla\rho)$, and the coefficient $m\rho^{m-1}$ in front of $\nabla\rho$ is the (density-dependent) diffusion coefficient. As $0<m<1$, the diffusion coefficient becomes larger as $\rho$ gets smaller. Conversely, as $\rho$ increases, the diffusion coefficient becomes smaller, which suppresses the diffusion rate. Hence, regions of low density correspond to faster diffusion, while in regions of high density the diffusion is slower. As an extreme example, if $\rho$ contains a singular part at some point (i.e., $\rho = \infty$ at that point), then $\rho^{m-1}=0$, and the diffusion vanishes there. For example, fast diffusion behaviour can be observed in dilute plasmas or rarefied gases, where particles in low-density regions move more freely and spread rapidly \cite{berryman1978nonlinear, king1988extremely}.

This behaviour of the diffusion coefficient in fast diffusion is opposite to that in the slow diffusion (porous media) regime, where $m>1$. The two types of diffusion (fast and slow) are very different, and the fast diffusion is by far the one that is less studied and understood. In the Euclidean setting, aggregation–diffusion dynamics in the fast diffusion regime was studied in \cite{CaDeFrLe2022, CaFeAlGo2024}, where the so-called partial mass concentration phenomenon was introduced. This phenomenon occurs as the aggregation interactions lead to high densities, while diffusion is too weak in order to prevent the blow-up. Mass concentration occurs only in the fast diffusion regime, and does not appear for slow or linear diffusion.

An alternative viewpoint on fast versus slow diffusion comes from the variational perspective. Indeed, write the entropy term in the energy as $\frac{1}{m-1} \int \rho(x)^{m-1} \d \rho(x)$, where the integration is with respect to the measure $\d \rho = \rho \,\d S$. In the slow diffusion regime ($m>1$), as $\rho$ concentrates, $\rho^{m-1}$ diverges to infinity, which makes the entropy itself infinite. Hence, energy minimizers in this regime cannot have mass concentrations. However, for fast diffusion ($0<m<1$), $\rho^{m-1}$ tends to zero as $\rho$ increases, and the entropy remains finite even when $\rho$ develops a singular part. Due to this fundamental difference between the two regimes, in the present research we allow singular measures as admissible energy minimizers, whereas in previous work on the slow diffusion case only absolutely continuous measures were considered \cite{FePaVa2025}. 

Steady states and energy minimization for aggregation models with fast diffusion in Euclidean space have been studied recently in several papers. In \cite{carrillo2019reverse}, energy minimizers are analyzed using a reverse HLS (Hardy--Littlewood--Sobolev) inequality that relates the fast-diffusion entropy and the interaction energy. In \cite{CaHiVoYa2019}, the existence of a global minimizer is proved by a variational method, and radial symmetry is obtained via a rearrangement inequality. The partial mass concentration in the steady states of aggregation equations with fast diffusion is explored in \cite{CaDeFrLe2022} and \cite{CaFeAlGo2024}. The minimization problem for aggregation-diffusion energies has also been studied on Cartan--Hadamard manifolds \cite{CaFePa2025a, CaFePa2025b, FePa2024b, FePa2024a} - for example, the fast diffusion range is considered in \cite{CaFePa2025b}. On Cartan–Hadamard manifolds, the faster volume growth of geodesic balls effectively strengthens the diffusion, so the conditions ensuring the existence of an energy minimizer differ from the flat case. Specifically, such conditions depend on the behaviour of the manifold's curvature at infinity. We also note that the fast diffusion equation (with no aggregation terms) has been extensively studied in the literature, with particular interest on the well-posedness and long time behaviour of its solutions \cite{BlBoDoGrVa2009, bonforte2010sharp, BonforteFigalli2024, HerreroPierre1985, vazquez2006porous, vazquez2006smoothing}, including works on manifolds \cite{bonforte2008fast, grillo2021fast, lu2009local}.

In our study we consider the free energy set up on the unit sphere, with fast diffusion entropy and interactions modelled by a quadratic potential. As noted above, one of the main motivations for the present work is to extend the Onsager free energy  \cite{Onsager1949} to nonlinear diffusion in the fast regime, and investigate the phase transitions (in terms of the interaction strength $\kappa$) of its ground states. In our analysis we have identified three different subranges of $0<m<1$ (specifically, i) $1- \frac{2}{\dm} <m<1$, ii) $1- \frac{2}{\dm-1}<m< 1- \frac{2}{\dm}$, and iii) $0<m<1- \frac{2}{\dm-1}$), where the global minimizers are qualitatively different, and the phase transitions are different too. For example, although the entropy remains finite for all $0<m<1$, even when $\rho$ has a singular part, the global energy minimizer of \eqref{energy-sphere} contains a singular part only for $0<m<1-\frac{2}{\dm}$ (ranges ii) and iii) -- see Theorem~\ref{thm:global-min}), and any such singular part must be a single Dirac delta concentration. This result is consistent with the fast diffusion theory on the Euclidean space. For the fast diffusion equation on the Euclidean space, it is known that the Dirac delta mass is stable for $0<m<1-\frac{2}{\dm}$ and unstable for $1-\frac{2}{\dm}<m<1$; see \cite{BrezisFriedman1983} and Remarks \ref{rmk:rho-integr} and \ref{rmk:scaling-blowup}. Since concentration is a local phenomenon, this consistency of the ranges of $m$ with the Euclidean case is natural.
 
We also mention that there has been an increased interest recently on aggregation-diffusion models on sphere and other manifolds, due to applications of these models to machine learning \cite{huilearning, Maggioni-etal21}. We note in particular the application to large language models, specifically to transformers \cite{Geshkovski_etal2025}. Various recent works have been motivated by such application, where a common theme is phase transitions from the uniform distribution, similar to the focus of this paper \cite{Balasubramanian_etal2025, Gerber_etal2025, PeletierShalova2025, ShalovaSchlichting2025}. 

The rest of the paper is organized as follows. In Section \ref{sect:cp} we establish the variational framework by setting up the admissible class of minimizers, and derive the possible critical points from the Euler-Lagrange equations.  In Section \ref{sect:bifurcations-ml1} we find the critical values of $\kappa$ at which equilibria undergo phase transitions, and we characterize these equilibria for various ranges of $\kappa$ and $m$ (Proposition \ref{prop:bif}). Finally, in Section \ref{sect:minimizers} we compare the energies associated with the found critical points, and determine which ones yield the global minimizer (Theorem \ref{thm:global-min}).

\section{Admissible class and critical points of the energy}
\label{sect:cp}
In this section we present the admissible set of energy minimizers and derive various expressions for the equilibrium points. For any two points on the unit sphere $x,y\in \bbs^\dm$, $\|x-y\|^2$ can be expressed as 
\[
\|x-y\|^2=\|x\|^2+\|y\|^2-2\langle x, y\rangle=2-2\langle x, y\rangle,
\] 
which can be used to rewrite the energy \eqref{energy-sphere} as 
\begin{equation}
\label{eqn:energy-cmform}
E[\mu]= \frac{1}{m-1}\int_{\bbs^\dm}\rho(x)^m \dx-\frac{\kappa}{2}\iint_{\bbs^{\dm}\times \bbs^{\dm}}\langle x, y\rangle \, \d \mu(x)\d \mu(y)+\frac{\kappa}{2}.
\end{equation}
Here, $\langle \cdot, \cdot \rangle$ denotes the inner product in $\bbr^{\dm+1}$.

The centre of mass $c_\mu \in \bbr^{\dm+1}$ of a measure $\mu\in \mathcal{P}(\bbs^\dm)$ is defined as
\begin{equation}
\label{eqn:cm}
c_\mu=\int_{\bbs^{\dm}}x \, \d \mu(x).
\end{equation}
It holds that
\begin{equation}
\begin{aligned}
\|c_\mu\|^2 &=\left\langle
\int_{\bbs^{\dm}}x \, \d \mu(x), \int_{\bbs^{\dm}}y \, \d \mu(y)
\right\rangle \\[2pt]
&=\iint_{\bbs^\dm\times \bbs^\dm}\langle x, y\rangle \, \d \mu(x) \d \mu(y),
\end{aligned}
\end{equation}
and hence, the energy functional can be written as
\begin{equation}
\label{eqn:energy-s}
E[\mu]= \frac{1}{m-1}\int_{\bbs^\dm}\rho(x)^m \dx -\frac{\kappa}{2}\|c_\mu\|^2+\frac{\kappa}{2}.
\end{equation}

\subsection{Admissible class}
\label{subsect:class}
We first need to set up the set of admissible energy minimizers. Note that there are infinitely many critical points of the energy functional. For instance, any symmetric distribution of Dirac delta measures (e.g., a regular $n$-gon on the great circle or a regular $\dm$-simplex on $\bbs^\dm$) are equilibrium states of the energy. The following lemma excludes a large class of such equilibria from being candidates for energy minimizers.

\begin{lemma}
\label{Lemma:Dirac-delta-concentration}
Let $\mu\in\mathcal{P}(\bbs^\dm)$ be a critical point of the energy $E[\mu]$. If the singular part of $\mu$ with respect to $\dS$ is not concentrated at one point, then $\mu$ cannot be a global energy minimizer.
\end{lemma}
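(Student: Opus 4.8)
The plan is to exploit the fact that, in the representation~\eqref{eqn:energy-s}, the energy of a measure $\mu$ depends on $\mu$ only through its absolutely continuous part $\rho\,\dS$ (via the entropy) and through its centre of mass $c_\mu$ (via the term $-\tfrac{\kappa}{2}\|c_\mu\|^2$). Starting from a measure $\mu$ as in the statement, I would keep $\rho$ unchanged and replace the singular part of $\mu$ by a single Dirac mass of the same total mass, located at a carefully chosen point. This leaves the entropy untouched while \emph{strictly} increasing $\|c_\mu\|$; since $\kappa>0$, the energy then strictly decreases, so $\mu$ cannot be a global energy minimizer. (Note that the criticality hypothesis is not actually used: the argument works for any admissible $\mu$ whose singular part is not a point mass.)

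In more detail, write $\mu=\rho\,\dS+\mu_s$ and set $s:=\mu_s(\bbs^\dm)$; by hypothesis $\mu_s$ is a nonzero measure (so $s\in(0,1]$) that is not of the form $s\,\delta_z$ for any $z\in\bbs^\dm$. Let $a:=\int_{\bbs^\dm}x\,\rho(x)\,\dS$ and $b:=\int_{\bbs^\dm}x\,\d\mu_s(x)$, so that $c_\mu=a+b$. The key elementary fact is that $\|b\|\le s$, with equality if and only if $\mu_s$ is a single Dirac mass: indeed, if $\|b\|=s$ and $e:=b/\|b\|$, then $\int_{\bbs^\dm}\bigl(1-\langle x,e\rangle\bigr)\,\d\mu_s(x)=0$ with nonnegative integrand, which forces $\langle x,e\rangle=1$, hence $x=e$, for $\mu_s$-a.e.\ $x$; under our hypothesis this gives the strict inequality $\|b\|<s$. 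Now introduce the competitor $\tilde\mu:=\rho\,\dS+s\,\delta_{x_0}\in\mathcal{P}(\bbs^\dm)$, where $x_0:=a/\|a\|$ if $a\neq0$ and $x_0$ is an arbitrary point of $\bbs^\dm$ otherwise. Then $c_{\tilde\mu}=a+s\,x_0$, so $\|c_{\tilde\mu}\|=\|a\|+s$ when $a\neq0$ and $\|c_{\tilde\mu}\|=s$ when $a=0$; in either case the triangle inequality together with $\|b\|<s$ yields $\|c_{\tilde\mu}\|>\|a\|+\|b\|\ge\|a+b\|=\|c_\mu\|$.

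Since $\rho$ is the absolutely continuous part of both $\mu$ and $\tilde\mu$, their entropy terms coincide, and \eqref{eqn:energy-s} then gives $E[\tilde\mu]=E[\mu]-\tfrac{\kappa}{2}\bigl(\|c_{\tilde\mu}\|^2-\|c_\mu\|^2\bigr)<E[\mu]$, so $\mu$ is not a global energy minimizer. I do not anticipate a genuine obstacle in this argument. The one point requiring some care is the equality analysis for $\|b\|\le s$ — that $\bigl\|\int_{\bbs^\dm}x\,\d\mu_s(x)\bigr\|$ attains its maximal value $\mu_s(\bbs^\dm)$ precisely when $\mu_s$ is concentrated at one point — since this is exactly what produces the strict gain in $\|c_\mu\|$. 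One should also note that $\int_{\bbs^\dm}\rho^m\,\dS$ is automatically finite (because $\rho\in L^1(\dS)$ on a finite-measure space and $0<m<1$), so that the comparison $E[\tilde\mu]<E[\mu]$ is between finite quantities.
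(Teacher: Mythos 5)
Your proof is correct, and it takes a genuinely different and in fact more economical route than the paper's. The paper perturbs the given singular part $\mu_s$ \emph{locally}: it first rotates $\mu_s$ so that $c_{\mu_s}$ aligns with $c_\rho$, then pushes $\mu_s$ forward under $f^t_z(x)=(x+tz)/\|x+tz\|$ and computes $\frac{\d}{\d t}\langle c_{(f^t_z)_\#\mu_s},z\rangle\big|_{t=0}=\int_{\bbs^\dm}(1-\langle x,z\rangle^2)\,\d\mu_s(x)$, which is positive unless $\mu_s$ is supported on the antipodal pair $\{z,-z\}$; that residual case then requires a separate explicit two-atom rotation argument. You instead produce a single \emph{global} competitor by replacing all of $\mu_s$ with one Dirac mass of the same weight aligned with $c_\rho$, and the strict gain in $\|c_\mu\|$ comes from the elementary equality analysis in $\bigl\|\int_{\bbs^\dm}x\,\d\mu_s(x)\bigr\|\le\mu_s(\bbs^\dm)$, which you carry out correctly (the integrand $1-\langle x,e\rangle$ is nonnegative on the sphere, so equality forces a point mass). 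This avoids both the alignment step and the antipodal case distinction, and it makes transparent that criticality of $\mu$ is never used — a point the paper's proof also implicitly shares. The only thing the paper's perturbative construction buys that yours does not is a family of nearby competitors (useful if one wanted local instability rather than mere non-minimality), but for the stated conclusion your argument is complete and cleaner; your remarks on the finiteness of $\int_{\bbs^\dm}\rho^m\,\dS$ and on the Dirac mass remaining in the singular part of $\tilde\mu$ close the remaining small gaps.
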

\begin{proof} A generic probability measure $\mu$ on $\bbs^\dm$ can be written as
\begin{equation}
\label{eqn:mu-decomp}
\mu=\alpha\mu_s+(1-\alpha)\rho \, \dS,
\end{equation}
where $0\leq \alpha\leq 1$, and $\alpha\mu_s$ and $(1-\alpha)\rho\,\dS$ are the singular and the absolutely continuous parts of $\mu$, respectively. Here, we assumed that both $\mu_s$ and $\rho \, \dS$ are in $\mathcal{P}(\bbs^\dm)$. 
Note that the regular part of $\mu$ here is $(1-\alpha) \rho \dS$, and the extra factor of $1-\alpha$ has to be considered when computing $E[\mu]$ (cf., \eqref{energy-sphere} or \eqref{eqn:energy-s}). We will show that provided the singular component of $\mu$ is not concentrated at one point, then a change of $\mu_s$ can decrease the energy $E[\mu]$. See Appendix \ref{appendix:Dirac-delta-concentration} for the rest of the proof.
\end{proof}

By Lemma \ref{Lemma:Dirac-delta-concentration}, we can assume that the singular part of a ground state $\mu$ is a single Dirac delta mass $\alpha\delta_{x_0}$ concentrated at a fixed point $x_0\in\bbs^\dm$. In the next lemma we show that a delta concentration with no regular part ($\alpha =1$) cannot be a global minimizer either.

\begin{lemma}
\label{lem:single-delta}
The Dirac delta concentration $\delta_{x_0}$ is not a global energy minimizer. 
\end{lemma}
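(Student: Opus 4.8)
The plan is to exhibit an explicit competitor whose energy is strictly less than $E[\delta_{x_0}]$. First I would compute $E[\delta_{x_0}]$: the measure is purely singular, so its absolutely continuous part is $\rho \equiv 0$ and the entropy term $\frac{1}{m-1}\int_{\bbs^\dm}\rho^m\,\dS$ vanishes. Since $\delta_{x_0}$ is a point mass, its centre of mass is $c_{\delta_{x_0}} = x_0$, so $\|c_{\delta_{x_0}}\|^2 = 1$, and by \eqref{eqn:energy-s} we get $E[\delta_{x_0}] = -\frac{\kappa}{2}\cdot 1 + \frac{\kappa}{2} = 0$. Hence it suffices to construct some $\mu \in \mathcal{P}(\bbs^\dm)$ with $E[\mu] < 0$.

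The natural candidate is a small perturbation that spreads a tiny amount of the mass from $x_0$ into a cap around $x_0$, keeping the bulk of the mass near $x_0$ so that $\|c_\mu\|^2$ stays close to $1$. Concretely, I would take $\mu_\epsilon = (1-\epsilon)\delta_{x_0} + \epsilon\, \sigma_\epsilon$, where $\sigma_\epsilon = \frac{1}{|B_\epsilon|}\chi_{B_\epsilon}\,\dS$ is the normalized uniform measure on the geodesic cap $B_\epsilon$ of radius $\epsilon$ centred at $x_0$ (here $|B_\epsilon|$ denotes the $\dS$-measure of the cap). Then the absolutely continuous part of $\mu_\epsilon$ is $\rho_\epsilon = \frac{\epsilon}{|B_\epsilon|}\chi_{B_\epsilon}$, and its entropy contribution is
\[
\frac{1}{m-1}\int_{\bbs^\dm}\rho_\epsilon^m\,\dS = \frac{1}{m-1}\,\epsilon^m\,|B_\epsilon|^{1-m}.
\]
Since $0<m<1$, the prefactor $\frac{1}{m-1}$ is negative, so the entropy term is \emph{negative}; moreover $|B_\epsilon| \sim C\epsilon^\dm$ as $\epsilon \to 0$, so this term behaves like $-c\,\epsilon^{m + \dm(1-m)}$ for a positive constant $c$. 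Meanwhile $c_{\mu_\epsilon} = (1-\epsilon)x_0 + \epsilon\, c_{\sigma_\epsilon}$, and since $c_{\sigma_\epsilon} \to x_0$ and $\|c_{\sigma_\epsilon}\| \le 1$, one checks $1 - \|c_{\mu_\epsilon}\|^2 = O(\epsilon)$ (in fact $O(\epsilon)$ at worst, coming from the $(1-\epsilon)^2$ factor and the deficit $1 - \|c_{\sigma_\epsilon}\|^2 = O(\epsilon^2)$). Therefore
\[
E[\mu_\epsilon] = \frac{1}{m-1}\epsilon^m |B_\epsilon|^{1-m} + \frac{\kappa}{2}\bigl(1 - \|c_{\mu_\epsilon}\|^2\bigr) = -c\,\epsilon^{m+\dm(1-m)} + O(\epsilon).
\]
The decisive point is the comparison of exponents: $m + \dm(1-m) < 1$ precisely when $(\dm-1)(1-m) < 0$, which never holds for $\dm \ge 2$ — so in fact the entropy term is the \emph{lower}-order one. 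I would therefore instead refine the competitor so that the centre-of-mass deficit is genuinely second order: replace the single cap by a symmetric choice that keeps $c_{\sigma_\epsilon}$ exactly at $x_0$ is impossible on a cap, but one can take $\sigma_\epsilon$ supported on the sphere of geodesic radius $\epsilon$ so that $\|c_{\sigma_\epsilon}\|^2 = \cos^2\epsilon = 1 - \epsilon^2 + O(\epsilon^4)$, giving $1 - \|c_{\mu_\epsilon}\|^2 = 2\epsilon(1-\cos\epsilon) + (1-\epsilon)^2\epsilon^2 + \ldots = O(\epsilon^2)$ once one also accounts for the $(1-\epsilon)$ weighting, and more carefully optimizes the split between the mass parameter and the cap radius. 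The cleanest route is to introduce two independent small parameters: mass $\beta$ transferred and cap radius $r$, compute $E$ as $\frac{1}{m-1}\beta^m |B_r|^{1-m} + O(\beta r^2) + O(\beta^2)$, and then choose $r$ fixed small and $\beta \to 0$: the entropy term $-c\beta^m r^{\dm(1-m)}$ dominates $O(\beta)$-type terms because $m < 1$, so $E[\mu_{\beta,r}] < 0 = E[\delta_{x_0}]$ for $\beta$ small enough.

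The main obstacle is precisely this exponent bookkeeping: one must verify that, with the right two-parameter competitor, the negative entropy gain of order $\beta^m$ beats the positive interaction cost, which is of order $\beta$ (linear in the transferred mass, once the cap radius is frozen). Since $\beta^m \gg \beta$ as $\beta\to 0^+$ for $0<m<1$, this works, and the argument is robust for all $\dm \ge 2$ and all $\kappa>0$. I would present it by fixing a small cap radius $r$ first, then letting the transferred mass $\beta\to 0$, which isolates the $\beta^m$ entropy term as the dominant contribution and makes the sign of $E[\mu_{\beta,r}]$ transparent. A remark should note that this is exactly the instability of the Dirac mass expected from fast-diffusion theory, and is the reason the admissible ground states must have a nontrivial regular part whenever they carry a singular part at all.
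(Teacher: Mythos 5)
Your final argument is correct and is essentially the paper's proof: perturb $\delta_{x_0}$ by moving a small mass $\beta$ into an absolutely continuous profile and observe that the negative entropy contribution of order $\beta^m$ dominates the $O(\beta)$ interaction cost as $\beta\to 0^+$, since $0<m<1$. The paper simply takes the spreading profile to be the uniform density on all of $\bbs^\dm$ (one parameter $t$), which sidesteps the cap-radius bookkeeping you needed to untangle, but the mechanism is identical.
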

\begin{proof}
As $\delta_{x_0}$ has no regular part and its centre of mass is at $x_0$ with $\|x_0\| =1$, by \eqref{eqn:energy-s} we have $E[\delta_{x_0}] =0.$ Consider the following perturbation of $\delta_{x_0}$:
\[
\mu^t=(1-t)\delta_{x_0}+\frac{t}{|\bbs^\dm|}\dS,
\]
where $0\leq t<1$. One can easily check that $\int_{\bbs^\dm}\d\mu^t=1$, and that $c_{\mu^t} = (1-t)x_0$. Then, by \eqref{eqn:energy-s} the corresponding energy can be expressed as 
\begin{align*}
E[\mu^t]&=\frac{t^m}{(m-1)|\bbs^\dm|^{m-1}}-\frac{\kappa}{2}(1-t)^2+\frac{\kappa}{2}\\
&=-\left(\frac{1}{(1-m)|\bbs^\dm|^{m-1}}\right)t^m+\kappa\left(t-\frac{t^2}{2}\right).
\end{align*}

As $0<m<1$, we have $E[\mu^t]<E[\mu^0]=E[\delta_{x_0}]$ for small $t>0$. In fact, we have
\[
\frac{\d}{\d t}E[\mu^t]\bigg|_{t=0+}=-\infty.
\]
We infer that $\delta_{x_0}$ is not a global energy minimizer.
\end{proof}

Next, we will investigate the absolutely continuous part of $\mu$. We denote by $\calP_{ac}(\bbs^\dm)\subset \calP(\bbs^\dm)$ the space of probability measures that are absolutely continuous with respect to $\d S$. By an abuse of notation we will often refer to an absolutely continuous measure $\rho \d S$ directly by its density $\rho$. 

\begin{lemma}[Radial symmetry of global minimizers]
\label{lem:symmetry-steady-states}
Let $\mu=\alpha\delta_{x_0}+(1-\alpha)\rho\dS$ be a global energy minimizer, where $0\leq \alpha<1$ and $\rho \in \calP_{ac}(\bbs^\dm)$. Then, 

\noindent a) The density $\rho$ is radially symmetric with respect to some $z \in\bbs^\dm$, i.e., for any two points $y_1, y_2 \in \bbs^\dm$ satisfying $\langle z,y_1\rangle=\langle z, y_2\rangle$, we have $\rho(y_1)=\rho(y_2)$. 

\noindent b) Furthermore, if $0<\alpha<1$, then $z=x_0$. That is, $\rho$ is symmetric with respect to $x_0$, and $c_\rho=\|c_\rho\|x_0$.
\end{lemma}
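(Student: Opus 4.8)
The plan is to use cap symmetrization (the symmetric decreasing rearrangement on $\bbs^\dm$, the spherical analogue of Schwarz symmetrization) together with the observation that the entropy term in \eqref{eqn:energy-s} is invariant under equimeasurable rearrangements of the absolutely continuous part. Concretely, write $\mu=\alpha\delta_{x_0}+(1-\alpha)\rho\,\dS$ as in the statement. For any density $\tilde\rho$ equimeasurable with $\rho$ (same distribution function with respect to $\dS$), the measure $\tilde\mu=\alpha\delta_{x_0}+(1-\alpha)\tilde\rho\,\dS$ lies in $\calP(\bbs^\dm)$, and since cap symmetrization preserves nonnegativity, total mass and absolute continuity, it is an admissible competitor. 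Its atomic part equals that of $\mu$, and by the layer-cake formula $\int_{\bbs^\dm}\tilde\rho^m\,\dS=\int_{\bbs^\dm}\rho^m\,\dS$ (a finite quantity, by H\"older's inequality and $|\bbs^\dm|<\infty$), so $\mu$ and $\tilde\mu$ have the same entropy. Hence, by \eqref{eqn:energy-s}, $E[\mu]\le E[\tilde\mu]$ is equivalent to $\|c_\mu\|\ge\|c_{\tilde\mu}\|$: among all rearrangements of $\rho$, with $\alpha$ and $x_0$ held fixed, $\rho$ must maximize $\tilde\rho\mapsto\|\alpha x_0+(1-\alpha)c_{\tilde\rho}\|^2$.

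The key tool is then the one-function Hardy--Littlewood rearrangement inequality on the sphere, applied through the following observation: for fixed $z\in\bbs^\dm$, the function $x\mapsto\langle z,x\rangle$ has superlevel sets equal to spherical caps centred at $z$, with strictly (and continuously) decreasing measure, so it coincides with its own symmetric decreasing rearrangement about $z$. Writing $\rho^*_z$ for the symmetric decreasing rearrangement of $\rho$ about $z$, one gets $\int_{\bbs^\dm}\langle z,x\rangle\rho(x)\,\dS\le\int_{\bbs^\dm}\langle z,x\rangle\rho^*_z(x)\,\dS=\|c_{\rho^*_z}\|$, the last equality because $\rho^*_z$ is radially symmetric about $z$, so $c_{\rho^*_z}$ is a nonnegative multiple of $z$; moreover equality forces $\rho=\rho^*_z$ a.e. Since $\|c_{\tilde\rho}\|=\sup_{|e|=1}\int\langle e,x\rangle\tilde\rho(x)\,\dS$, taking suprema over unit vectors shows that every rearrangement satisfies $\|c_{\tilde\rho}\|\le\|c_{\rho^*_z}\|$, with the bound independent of $z$ by rotational invariance of $\dS$; in other words cap symmetrization maximizes the modulus of the centre of mass.

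To conclude: if $\alpha=0$ then $\|c_\mu\|=\|c_\rho\|$, so $\rho$ maximizes $\|c_\cdot\|$; when $c_\rho\ne0$ set $z=c_\rho/\|c_\rho\|$, so that $\|c_\rho\|=\int\langle z,x\rangle\rho\,\dS\le\|c_{\rho^*_z}\|\le\|c_\rho\|$, which forces equality above and hence $\rho=\rho^*_z$, i.e.\ radial symmetry about $z$; when $c_\rho=0$, every $\rho^*_z$ must have vanishing centre of mass, which (by the strict inequality applied to a non-constant $\rho^*_z$) forces $\rho$ to be uniform, hence radially symmetric about any point. If $0<\alpha<1$, expand $\|c_\mu\|^2=\alpha^2+2\alpha(1-\alpha)\langle x_0,c_\rho\rangle+(1-\alpha)^2\|c_\rho\|^2$ and compare $\rho$ with $\sigma:=\rho^*_{x_0}$: the Hardy--Littlewood inequality for the kernel $\langle x_0,\cdot\rangle$ gives $\langle x_0,c_\rho\rangle\le\langle x_0,c_\sigma\rangle$, and the previous paragraph gives $\|c_\rho\|\le\|c_\sigma\|$; since $2\alpha(1-\alpha)$ and $(1-\alpha)^2$ are positive, $\|c_\mu\|^2\le\|\alpha x_0+(1-\alpha)c_\sigma\|^2$, so minimality forces equality throughout, and the equality case gives $\rho=\sigma=\rho^*_{x_0}$. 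This proves part a) (with symmetry axis $x_0$ when $\alpha>0$), and since $\rho$ is then radially symmetric about $x_0$ and non-increasing away from it, $c_\rho$ is a nonnegative multiple of $x_0$, i.e.\ $c_\rho=\|c_\rho\|x_0$, which is part b).

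The main obstacle is making the equality case of the Hardy--Littlewood inequality on $\bbs^\dm$ rigorous: namely, that $\int f\langle z,\cdot\rangle\,\dS=\int f^*_z\langle z,\cdot\rangle\,\dS$ forces the superlevel set $\{f>t\}$ to coincide, up to a $\dS$-null set, with a cap about $z$ for a.e.\ $t$, whence $f=f^*_z$ a.e. This relies on the non-degeneracy of the distribution of $\langle z,\cdot\rangle$ (its superlevel caps have strictly and continuously decreasing measure), which holds here; I would invoke the sphere analogue of the classical equality statement for rearrangement inequalities. A minor point, easily checked, is that the competitor $\tilde\mu$ remains in the admissible class, which is immediate since cap symmetrization preserves nonnegativity, mass, and absolute continuity.
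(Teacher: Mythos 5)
Your proof is correct, but it takes a genuinely different route from the paper's. The paper does not use rearrangement at all: for part a) it takes an arbitrary rotation $R$ fixing $c_\rho$, observes that $\rho'=R_\#\rho$ yields a competitor with the same energy, and then uses the \emph{strict convexity} of $\rho\mapsto\frac{1}{m-1}\int\rho^m$ (recall $m-1<0$ while $t\mapsto t^m$ is concave) together with $c_{\mu_\lambda}=c_\mu$ along the segment $\rho_\lambda=(1-\lambda)\rho+\lambda\rho'$ to force $\rho=R_\#\rho$ for every such $R$; invariance under all rotations fixing $c_\rho$ is exactly radial symmetry about $z=c_\rho/\|c_\rho\|$ (and uniformity when $c_\rho=0$). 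Part b) is then a one-line maximization of $\|\alpha x_0+(1-\alpha)\|c_\rho\|z\|$ over $z\in\bbs^\dm$. Your cap-symmetrization argument trades this convexity step for the rearrangement invariance of the entropy plus the Hardy--Littlewood inequality, and it buys a strictly stronger conclusion: the minimizing density is not only radially symmetric but non-increasing in the angle from the pole. The price is the equality case of Hardy--Littlewood on $\bbs^\dm$, which you rightly flag as the one nontrivial input; it does hold here because the superlevel sets of $\langle z,\cdot\rangle$ are caps whose measure varies strictly and continuously (and the inequality applies after the harmless shift to the nonnegative kernel $1+\langle z,\cdot\rangle$, since $\int\tilde\rho=\int\rho^*_z$). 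Two points deserve a line each to be fully rigorous: in the subcase $\alpha=0$, $c_\rho=0$, the claim that a non-constant cap-symmetric density has nonzero centre of mass needs the strict Chebyshev/correlation inequality for the similarly ordered pair $(\rho^*_z(\theta),\cos\theta)$ against the measure $\sin^{\dm-1}\theta\,\d\theta$; and in part b) you must note that equality in the sum forces equality in \emph{each} of the terms $\langle x_0,c_\rho\rangle$ and $\|c_\rho\|^2$ separately, which holds because the coefficients $2\alpha(1-\alpha)$ and $(1-\alpha)^2$ are both strictly positive.
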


\begin{proof}
\textit{a}) Take a rotation $R:\bbs^\dm\to\bbs^\dm$ that preserves the centre of mass $c_\rho$ of $\rho$. Note that the centre of $\rho':=R_\#\rho$ is also $c_\rho$ (i.e., $c_{\rho'}=c_{\rho}$). Define
\[
\mu'=\alpha\delta_{x_0}+(1-\alpha)\rho'\dS.
\]
Then, 
\[
c_{\mu'}=\alpha x_0+(1-\alpha)c_{\rho'}=\alpha x_0+(1-\alpha)c_{\rho}=c_\mu,
\]
and by \eqref{eqn:energy-s}, this yields
\begin{align*}
E[\mu']&=\frac{1}{m-1}\int_{\bbs^\dm}\rho'(x)^m\dx-\frac{\kappa}{2}\|c_{\mu'}\|^2+\frac{\kappa}{2}\\
&=\frac{1}{m-1}\int_{\bbs^\dm}\rho(x)^m\dx-\frac{\kappa}{2}\|c_{\mu}\|^2+\frac{\kappa}{2} \\
&=E[\mu],
\end{align*}
where for the second equality we used that $\rho'$ is a rotation of $\rho$, and that $c_{\mu'}=c_\mu$.

Consider a convex combination of $\rho$ and $\rho'$:
\[
\rho_\lambda=(1-\lambda)\rho+\lambda\rho', \qquad 0<\lambda<1.
\]
Since $\frac{1}{m-1}\int_{\bbs^\dm}\rho(x)^m$ is a strictly convex functional, we have
\[
\frac{1}{m-1}\int_{\bbs^\dm}\rho_\lambda(x)^m\dx\leq (1-\lambda)\left(\frac{1}{m-1}\int_{\bbs^\dm}\rho(x)^m\dx\right)+\lambda\left(\frac{1}{m-1}\int_{\bbs^\dm}\rho'(x)^m\dx\right),
\]
where equality holds only for $\rho=\rho'$. Together with the fact that $c_{\mu_\lambda} = (1-\lambda) c_\mu + \lambda c_{\mu'} = c_\mu$, we get from \eqref{eqn:energy-s} that
\[
E[\mu_\lambda]\leq (1-\lambda)E[\mu]+\lambda E[\mu']=E[\mu],
\]
with equality holding only for $\rho=\rho'$. Since $E[\mu]$ is the global minimum, then $E[\mu_\lambda]=E[\mu]$ and hence, $\rho=\rho'$ for any rotation $R$ that preserves $c_\rho$. Let $z\in \bbs^\dm$ satisfy $c_\rho=\|c_\rho\|z$. Then, $\rho=R_\#\rho$ for any rotation $R$ that preserves $z$. It implies that $\rho$ is symmetric with respect to $z$ (or equivalently symmetric with respect to $-z$).
\medskip

\textit{b}) Now assume that $0<\alpha<1$, i.e., $\mu$ has both a Dirac delta and a regular component. From part a), we know that $\rho$ is symmetric with respect to $z$, where $c_\rho=\|c_\rho\|z$. If $\|c_\rho\|=0$ then $z$ can be any vector and we can set $z=x_0$. If  $\|c_\rho\|>0$, then express the energy as
\[
E[\mu]=\frac{1}{m-1}\int_{\bbs^\dm} (1-\alpha)^m \rho(x)^m\dx-\frac{\kappa}{2} \bigl\|(1-\alpha)x_0+ \alpha \|c_\rho\| z \bigr\|^2+\frac{\kappa}{2}.
\]
Finally, note that this energy is minimized if and only if $z=x_0$, since both $1-\alpha$ and $\alpha \|c_\alpha\|$ are positive. This concludes the proof.
\end{proof}

Based on Lemmas \ref{Lemma:Dirac-delta-concentration}-\ref{lem:symmetry-steady-states}, we consider the following admissible set $\Psym(\bbs^\dm) \subset \mathcal{P}(\bbs^\dm)$ for the energy minimizers:
\begin{equation}
\label{eqn:Psym}
\Psym(\bbs^\dm)=\left\{\alpha\delta_{x_0}+(1-\alpha)\rho\dx:~0\leq\alpha < 1,~\rho\in\mathcal{P}_{ac}(\bbs^\dm),~ c_\rho=\|c_\rho\|x_0\right\},
\end{equation}
where $x_0$ is a fixed point on $\bbs^\dm$. Note that when $\alpha=0$, $\mu=\rho\dS$ and we can assume that $\rho$ is symmetric with respect to $x_0$ without loss of generality. Indeed, by Lemmas \ref{Lemma:Dirac-delta-concentration}-\ref{lem:symmetry-steady-states}, it holds that
\[
\min_{\mu\in\Psym(\bbs^\dm)}E[\mu]=\min_{\mu\in\mathcal{P}(\bbs^\dm)}E[\mu].
\]


\subsection{Critical points of the energy}
\label{subsect:cp}

We now investigate the equilibria that are in the set $\Psym(\bbs^\dm)$ of admissible minimizers. We will distinguish between equilibria that are absolutely continuous with respect to $\d S$, and equilibria that contain a singular part. The interaction strength $\kappa>0$ is arbitrary, but fixed. In this section we do not yet indicate, however, the dependence of the equilibria on $\kappa$; hence, the equilibria computed here depend on $\kappa$, unless otherwise noted.
\medskip

{\em a. Absolutely continuous equilibria.} Consider first an equilibrium $\mu = \rho \dS$ which has no singular component. The Euler--Lagrange equation for the functional \eqref{eqn:energy-s} is given by
\begin{equation}
\label{eqn:equil}
\left(\frac{m}{m-1}\right)\rho(x)^{m-1}-\kappa \langle c_\rho, x\rangle = \lambda, \qquad x\in \mathrm{supp}(\rho),
\end{equation}
for some constant $\lambda$. Here, $\lambda$ has the role of a Lagrange multiplier associated with the unit mass constraint.

The equilibria satisfying \eqref{eqn:equil} are fully supported on the entire sphere. Indeed, since the exponent of $\rho$ in \eqref{eqn:equil} is negative, if the support was strictly contained in $\bbs^\dm$, then $\lambda$ would be infinite. From \eqref{eqn:equil} and $0<m<1$, it must necessarily hold that
\begin{equation}
\label{eqn:ml1:cond}
\lambda + \kappa \langle c_\rho, x \rangle \leq 0, \qquad \forall x \in \bbs^\dm.
\end{equation}

Recall that $c_\rho = \|c_\rho\| x_0$, with $x_0 \in \bbs^\dm$ fixed. Define $\theta_x = \mathrm{arccos}\, \langle x_0, x \rangle \in [0,\pi]$, to write the equilibria as
\begin{equation}
\label{eqn:equil-fs}
\rho(x) = \left(\frac{m}{1-m}\right)^{\frac{1}{1-m}} \left(-\lambda- \kappa\|c_\rho\|\cos\theta_x\right)^{\frac{1}{m-1}}, \qquad\forall~ x\in \bbs^\dm.
\end{equation}
Also, since $-\lambda- \kappa\|c_\rho\|\cos\theta_x \geq 0$, for all $\theta_x \in [0,\pi]$, we have  $\lambda \leq -\kappa \|c_\rho\|$. If the inequality is strict, i.e., $\lambda < -\kappa \|c_\rho\|$, then  $-\lambda- \kappa\|c_\rho\|\cos\theta_x >0$ for all $x \in \bbs^\dm$, and hence $\rho \in L^\infty(\bbs^\dm)$. However, $\lambda = -\kappa \|c_\rho\|$ is a limiting case, where $\rho$ becomes infinite at $\theta=0$; see part b, in particular Remark \ref{rmk:rho-integr}.

To find $\lambda$ and $\|c_\rho\|$, and hence determine the equilibrium \eqref{eqn:equil-fs}, we ask that $\rho$ has mass $1$ and centre of mass at $c_\rho$. Using hyperspherical coordinates, we arrive at the following system of equations:
\begin{equation}
\label{eqn:system-fs-ml1}
\begin{aligned}
1&=\dm w_\dm  \int_0^\pi \left(\frac{m}{1-m}\right)^{\frac{1}{1-m}} \left(-\lambda- \kappa\|c_\rho\|\cos\theta \right)^{\frac{1}{m-1}}\sin^{\dm-1}\theta \, \d\theta, \\
\|c_\rho\|&=\dm w_\dm\int_0^\pi \left(\frac{m}{1-m}\right)^{\frac{1}{1-m}} \left(-\lambda- \kappa\|c_\rho\|\cos\theta \right)^{\frac{1}{m-1}} \sin^{\dm-1}\theta \cos\theta \, \d\theta,
\end{aligned}
\end{equation}
where $w_{\dm}$ denotes the volume of the $\dm$-dimensional unit ball. In the integration above we used $|\bbs^{\dm-1}|=\dm w_{\dm}$, where $|\bbs^{\dm-1}|$ denotes the area of the $(\dm-1)$-dimensional unit sphere. System \eqref{eqn:system-fs-ml1} needs to be solved for $\lambda$ and $\|c_\rho\|$, for a given $\kappa$.
\medskip

{\em b. Equilibria that have a singular component.} By \eqref{eqn:Psym}, we consider probability measures of the form
\begin{equation}
\label{eqn:mu-alpha}
\mu_\alpha = \alpha \delta_{x_0} + (1-\alpha) \rho \, \d S,
\end{equation}
where $\alpha \in (0,1)$ and $x_0$ is the normalized centre of mass of the density $\rho \in \calP_{ac}(\bbs^\dm)$:
\begin{equation}
\label{eqn:x0}
x_0 = \frac{c_\rho}{\| c_\rho\|} = \frac{\int_{\bbs^\dm} x \rho(x) \d S}{\left \| \int_{\bbs^\dm} x \rho(x)  \d S\right \|}.
\end{equation}

Similar to how \eqref{eqn:energy-s} was derived, we can write
\begin{equation}
\label{eqn:energy-mu-s}
E[\mu_\alpha] = \frac{1}{m-1} \int_{\bbs^\dm} (1-\alpha)^m \rho(x)^m \dx - \frac{\kappa}{2} \| c_{\mu_\alpha} \|^2 + \frac{\kappa}{2},
\end{equation}
where the centre of mass of $\mu_\alpha$ is given by 
\begin{equation}
\label{eqn:cm-mu}
\begin{aligned}
c_{\mu_\alpha} &= \int_{\bbs^\dm} x \d \mu_\alpha(x)= \alpha x_0  + (1-\alpha) c_\rho.
\end{aligned}
\end{equation}

Also, using \eqref{eqn:x0}, we can further write
\begin{equation}
\label{eqn:energy-mu-s2}
\begin{aligned}
E[\mu_\alpha] =& \frac{1}{m-1} \int_{\bbs^\dm} (1-\alpha)^m \rho(x)^m \dx \\
& \quad - \frac{\kappa}{2} \left( \alpha^2 + 2 \alpha (1-\alpha) \| c_\rho \| + (1-\alpha)^2 \| c_\rho \|^2 \right) + \frac{\kappa}{2}.
\end{aligned}
\end{equation}

In computing the Euler-Lagrange equations for the energy $E[\mu_\alpha]$, we consider variations in both the parameter $\alpha$ and density $\rho$. First, by setting $\displaystyle \frac{\partial}{\partial \alpha} E[\mu_\alpha]=0$,
we find 
\begin{align*}
\frac{m(1-\alpha)^{m-1}}{1-m} \int_{\bbs^\dm}  \rho(x)^m \dx -\kappa \left( \alpha + (1 -2 \alpha) \| c_\rho \| + (\alpha-1) \|c_\rho \|^2 \right) = 0.
\end{align*}
After factorizing the expression in the round brackets in the l.h.s. above, we arrive at the following Euler-Lagrange equation (by perturbing with respect to $\alpha$ only):
\begin{equation}
\label{eqn:EL-alpha}
\frac{m(1-\alpha)^{m-1}}{1-m} \int_{\bbs^\dm}  \rho(x)^m \dx - \kappa \left ( 1- \| c_\rho \|\right) \left( \alpha + (1-\alpha) \|c_\rho \|)\right) =0.
\end{equation}

Now consider perturbations with respect to $\rho$. Note again that perturbations of $\rho$ must have mass $1$, which introduces a Lagrange multiplier. 

Omitting the calculation, we arrive at the following Euler--Lagrange equation (by perturbating with respect to $\rho$):
\begin{equation}
\label{eqn:EL-rho}
\frac{m}{m-1} (1-\alpha)^m \rho(x)^{m-1} - \kappa(1-\alpha) \langle c_\rho, x \rangle \left( \frac{\alpha}{\| c_\rho \|} + 1-\alpha \right) = \lambda, \qquad x \in \text{supp}(\rho),
\end{equation}
for some constant $\lambda$.

The support of $\rho$ is the entire sphere; if we assume otherwise, $\lambda$ would be infinite. From \eqref{eqn:EL-rho}, by writing in hyperspherical coordinates, we have
\begin{equation}
\label{eqn:equil-fsmu}
\rho(x) = \left( \frac{m(1-\alpha)^m}{1-m} \right)^{\frac{1}{1-m}} \bigl (-\lambda - \kappa(1-\alpha) \cos \theta_x \left (\alpha + (1 -\alpha)\| c_\rho \|  \right) \bigr)^{\frac{1}{m-1}}, \qquad \forall x \in \bbs^\dm.
\end{equation}
Note that for $\alpha=0$, \eqref{eqn:equil-fsmu} reduces to \eqref{eqn:equil-fs}.

An equilibrium $\mu_\alpha$ in the form \eqref{eqn:mu-alpha} must satisfy both \eqref{eqn:EL-alpha} and \eqref{eqn:EL-rho}. Multiply \eqref{eqn:EL-rho} by $\frac{\rho(x)}{\alpha-1}$ and integrate over $x \in \bbs^\dm$ to get
\begin{equation}
\label{eqn:EL-int}
\frac{m(1-\alpha)^{m-1}}{1-m} \int_{\bbs^\dm}\rho(x)^m \dx + \kappa  \|c_\rho \|\left( \alpha + (1-\alpha) \|c_\rho \| \right) = \frac{\lambda}{\alpha-1}.
\end{equation}
Now subtract \eqref{eqn:EL-int} from \eqref{eqn:EL-alpha} to find the following relationship between $\lambda, \| c_\rho \|$ and $\alpha$:
\begin{equation}
\label{eqn:lambda-kappa}  
-\frac{\lambda}{1-\alpha} = \kappa( \alpha + (1 -\alpha)\| c_\rho \|).
\end{equation}
We remark here that $\alpha=0$ corresponds to the limiting case $\lambda = - \kappa \| c_\rho \| $ pointed out in part a. 

Finally, by using \eqref{eqn:lambda-kappa}, we write \eqref{eqn:equil-fsmu} as
\begin{equation}
\label{eqn:equil-fsmu-s}  
\rho(x) = \left( \frac{m(1-\alpha)^m}{1-m} \right)^{\frac{1}{1-m}} (-\lambda)^{\frac{1}{m-1}} \bigl( 1 -  \cos \theta_x \bigr)^{\frac{1}{m-1}}, \qquad \forall x \in \bbs^\dm.
\end{equation}
Also, we find the compatibility conditions for the mass and centre of mass:
\begin{equation}
\label{eqn:system-mu}
\begin{aligned}
1&=\dm w_\dm  \int_0^\pi \left( \frac{m(1-\alpha)^m}{1-m} \right)^{\frac{1}{1-m}} (-\lambda)^{\frac{1}{m-1}} \bigl (1 -  \cos \theta \bigr)^{\frac{1}{m-1}} \sin^{\dm-1}\theta \, \d\theta, \\[5pt]
\|c_\rho\|&=\dm w_\dm\int_0^\pi \left( \frac{m(1-\alpha)^m}{1-m} \right)^{\frac{1}{1-m}} (-\lambda)^{\frac{1}{m-1}} \bigl (1 -  \cos \theta \bigr)^{\frac{1}{m-1}}  \sin^{\dm-1}\theta \cos\theta \, \d\theta.
\end{aligned}
\end{equation}
The constants $\lambda, \|c_\rho\|$ and $\alpha$ are then found by solving \eqref{eqn:lambda-kappa} and \eqref{eqn:system-mu}.

First note that by dividing the two equations in \eqref{eqn:system-mu}, we find
\begin{equation}
\label{eqn:crho-kgk2}
\|c_\rho\| =\frac{\int_0^\pi (1-\cos\theta)^{\frac{1}{m-1}}\sin^{\dm-1}\theta \cos\theta\d\theta}{\int_0^\pi (1-\cos\theta)^{\frac{1}{m-1}}\sin^{\dm-1}\theta\d\theta},
\end{equation}
so the centre of mass of $\rho$ does not depend on $\kappa$. Furthermore, by the first equation in \eqref{eqn:system-mu}, we have
\begin{equation}
\label{eqn:mlambda}
(-\lambda)^{\frac{1}{m-1}} = \frac{1}{\dm w_\dm} \left( \frac{m(1-\alpha)^m}{1-m} \right)^{\frac{1}{m-1}} \frac{1}{\int_0^\pi\bigl (1 -  \cos \theta \bigr)^{\frac{1}{m-1}} \sin^{\dm-1}\theta \, \d\theta},
\end{equation}
which upon substitution in \eqref{eqn:equil-fsmu-s} yields
\begin{equation}
\label{eqn:equil-rho-reg}
\rho(x) = \frac{\bigl( 1 -  \cos \theta_x \bigr)^{\frac{1}{m-1}}}{\dm w_\dm \int_0^\pi\bigl (1 -  \cos \theta \bigr)^{\frac{1}{m-1}} \sin^{\dm-1}\theta \d \theta} , \qquad \forall x \in \bbs^\dm.
\end{equation}
Hence, we conclude that the density $\rho$ of the regular component of an equilibrium in the form \eqref{eqn:mu-alpha} is {\em fixed}, in the sense that it does not depend on $\kappa$. The only variable that needs to be determined is $\alpha$, which can be done by solving \eqref{eqn:lambda-kappa} and \eqref{eqn:mlambda} for $\alpha$ and $\lambda$, with $\|c_\rho\|$ given by \eqref{eqn:crho-kgk2}.
\begin{remark}
\label{rmk:rho-integr} 
The density $\rho$ from \eqref{eqn:equil-rho-reg} is not in $L^\infty(\bbs^\dm)$, as it blows up at $\theta_x=0$. We need however that $\rho \in L^1(\bbs^\dm)$ ($\rho$ is unit mass in fact). By hyperspherical coordinates (see first equation in \eqref{eqn:system-mu}), this reduces to checking whether $(1-\cos \theta)^{\frac{1}{m-1}} \sin^{\dm-1} \theta$ has an integrable singularity at $\theta =0$. As $1-\cos \theta$ behaves as $\theta^2/2$ near $\theta=0$, we need to check the integrability of $\theta^{\frac{2}{m-1}+\dm-1}$. We find that $\rho$ is integrable when $0<m<1-\frac{2}{\dm}$, and not integrable for $1-\frac{2}{\dm} < m <1$. Also note that for $\dm =1$ and $\dm=2$, $\rho$ is not integrable for any $0 < m <1$. We infer from these observations that equilibria in the form \eqref{eqn:mu-alpha} (i.e., with a singular component) do not occur for $\dm =1$ and $\dm=2$, and only occur when $0<m<1-\frac{2}{\dm}$ for $\dm \geq 3$.
\end{remark}

\begin{remark}
\label{rmk:scaling-blowup} 
The critical value $m=1-\frac{2}{\dm}$ also appears in distinguishing different behaviours of solutions to the nonlinear diffusion equation $\rho_t = \Delta \rho^m$ in the Euclidean space $\bbr^\dm$. For $0<m<1-\frac{2}{\dm}$, it was shown that the Dirac delta remains stable in a certain sense, for all times \cite{BrezisFriedman1983}. In informal terms, diffusion does not spread out very localized initial densities when $0<m<1-\frac{2}{\dm}$; we also note that this result applies only in dimensions $\dm\geq 3$. On the other hand, also shown in \cite{BrezisFriedman1983}, diffusion does spread out a Dirac delta when $1-\frac{2}{\dm}<m<1$. Even though we work here on the sphere, as opposed to the flat Euclidean space, these consideration are very local in space, and the effects or curvature do not seem important. Hence, as noted in Remark \ref{rmk:rho-integr}, equilibria of \eqref{energy-sphere} that contain a Dirac delta can only occur for $0<m<1-\frac{2}{\dm}$ and $\dm \geq 3$, which is consistent with the case when the Dirac delta is ``stable" for the nonlinear diffusion evolution equation.
\end{remark}


\subsection{Uniform distribution}
\label{subsect:unif-distr}

The uniform distribution on the sphere, given by
\begin{equation}
\label{eqn:rho-uni}
\rhou(x) = \frac{1}{|\bbs^\dm|}, \qquad \forall x \in \bbs^\dm,
\end{equation}
where $|\bbs^\dm|$ denotes the area of $\bbs^\dm$, is a solution of \eqref{eqn:equil} for all $\kappa>0$. Indeed, in this case, $c_{\rhou} =0$, and the constant $\lambda$ is given by
\begin{equation}
\label{eqn:lambda-uni}
\lambda_{\text{uni}} = \left(\frac{m}{m-1}\right) \frac{1}{|\bbs^\dm|^{m-1}}.
\end{equation}
In applications to polymer orientation, this equilibrium corresponds to the isotropic state. 

The full nonlinear stability of the uniform distribution can be established, in terms of the strength $\kappa$ of the attractive interactions. The result is the following. 
\begin{proposition}[Stability of the uniform distribution]
\label{prop:stab-unif}
The uniform distribution $\rhou$ is a stable critical point of the energy functional $E$ if and only if $\kappa \leq \kappa_1$, where
\begin{equation}
\label{eqn:kappa1}
\kappa_1:= \frac{m (\dm+1)}{|\bbs^\dm|^{m-1}}.
\end{equation}
\end{proposition}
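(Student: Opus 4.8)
The plan is to compute the second variation of the energy $E$ at $\rhou$ and determine the sign condition on $\kappa$ that makes it nonnegative. Since we are testing stability among absolutely continuous probability densities (the uniform distribution has no singular part, and by Lemma~\ref{lem:single-delta} and the structure of $\Psym(\bbs^\dm)$ small perturbations need only be considered in the regular component), I would write a perturbation as $\rho_\e = \rhou + \e \psi$ where $\psi \in L^2(\bbs^\dm)$ satisfies the mass constraint $\int_{\bbs^\dm} \psi \, \dS = 0$, and expand $E[\rho_\e]$ to second order in $\e$. The first-order term vanishes because $\rhou$ is a critical point; the second-order term is
\[
\frac{1}{2} E''[\rhou](\psi,\psi) = \frac{m}{2} \, \rhou^{m-2} \int_{\bbs^\dm} \psi(x)^2 \, \dS - \frac{\kappa}{2} \left\| \int_{\bbs^\dm} x\, \psi(x) \, \dS \right\|^2,
\]
using the expression \eqref{eqn:energy-s} for the energy (the entropy contributes $\frac{1}{m-1} \cdot \frac{m(m-1)}{2} \rhou^{m-2} \int \psi^2 = \frac{m}{2}\rhou^{m-2}\int\psi^2$, and $\|c_{\rho_\e}\|^2 = \e^2 \|\int x \psi\,\dS\|^2$ since $c_{\rhou}=0$). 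Here $\rhou^{m-2} = |\bbs^\dm|^{2-m}$.

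The next step is to identify the worst-case direction $\psi$. The interaction term only sees the projection of $\psi$ onto the span of the coordinate functions $x_1,\dots,x_{\dm+1}$, which are (up to normalization) the degree-one spherical harmonics. So I would restrict to $\psi(x) = \langle a, x\rangle$ for a fixed vector $a \in \bbr^{\dm+1}$; this automatically satisfies $\int_{\bbs^\dm}\psi\,\dS = 0$ by symmetry, and any component of $\psi$ orthogonal to the linear functions only increases the entropy term while leaving the interaction term untouched, hence cannot destabilize. For $\psi(x)=\langle a,x\rangle$ one computes $\int_{\bbs^\dm} \langle a,x\rangle^2 \,\dS = \frac{\|a\|^2}{\dm+1}|\bbs^\dm|$ (by isotropy, $\int x_i x_j \,\dS = \frac{|\bbs^\dm|}{\dm+1}\delta_{ij}$) and $\int_{\bbs^\dm} x\,\langle a,x\rangle\,\dS = \frac{|\bbs^\dm|}{\dm+1} a$, so $\|\int x\psi\,\dS\|^2 = \frac{\|a\|^2}{(\dm+1)^2}|\bbs^\dm|^2$. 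Substituting,
\[
\frac{1}{2}E''[\rhou](\psi,\psi) = \frac{\|a\|^2 |\bbs^\dm|}{2(\dm+1)}\left( m\, |\bbs^\dm|^{2-m} \cdot \frac{1}{|\bbs^\dm|} \cdot (\dm+1) \cdot \frac{1}{\dm+1} - \frac{\kappa}{\dm+1}|\bbs^\dm| \right),
\]
which I'd simplify to $\frac{\|a\|^2|\bbs^\dm|^2}{2(\dm+1)^2}\big( m(\dm+1)|\bbs^\dm|^{1-m} - \kappa \big)$; this is nonnegative for all $a$ exactly when $\kappa \le m(\dm+1)|\bbs^\dm|^{1-m} = m(\dm+1)/|\bbs^\dm|^{m-1} = \kappa_1$.

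For the converse (instability when $\kappa > \kappa_1$) the direction $\psi(x) = \langle a,x\rangle$ with the inequality above strict gives $E''<0$, so moving in that direction strictly decreases the energy to second order, contradicting stability; one should check that $\rho_\e$ stays a valid (nonnegative) density for small $\e$, which is immediate since $\rhou$ is a positive constant. For the stability direction ($\kappa\le\kappa_1$), the only subtlety is upgrading nonnegativity of the second variation against linear-harmonic perturbations to genuine stability against all admissible perturbations: one decomposes an arbitrary $\psi$ into its degree-one harmonic part $\psi_1$ and the remainder $\psi^\perp$, notes $\int x\psi\,\dS = \int x\psi_1\,\dS$ and $\int\psi^2 = \int\psi_1^2 + \int(\psi^\perp)^2 \ge \int\psi_1^2$, so $E''[\rhou](\psi,\psi)\ge E''[\rhou](\psi_1,\psi_1)\ge 0$. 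I expect the main obstacle (such as it is) to be purely bookkeeping: keeping the powers of $|\bbs^\dm|$ and the factors of $(\dm+1)$ straight through the substitution, and being careful that "stability" in the paper's sense is the statement about the sign of the second variation so that no higher-order argument or entropy-convexity subtlety is actually needed.
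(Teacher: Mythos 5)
Your treatment of the absolutely continuous perturbations is correct and matches the paper's: the second variation at $\rhou$ is $\frac{m}{|\bbs^\dm|^{m-2}}\int_{\bbs^\dm}\psi^2\,\dS-\kappa\bigl\|\int_{\bbs^\dm}x\,\psi\,\dS\bigr\|^2$, and your reduction to degree-one spherical harmonics via the orthogonal decomposition $\psi=\psi_1+\psi^\perp$ is exactly the content of Lemma~\ref{lem:minF} (which the paper imports from \cite{FePaVa2025}); your arithmetic recovering $\kappa_1=m(\dm+1)/|\bbs^\dm|^{m-1}$ checks out, and proving that lemma directly rather than citing it is a perfectly good, slightly more self-contained route. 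The instability direction for $\kappa>\kappa_1$ is also fine.

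The genuine gap is your opening dismissal of singular perturbations. You assert that ``by Lemma~\ref{lem:single-delta} and the structure of $\Psym(\bbs^\dm)$ small perturbations need only be considered in the regular component,'' but neither justification works: Lemma~\ref{lem:single-delta} only says the pure Dirac mass $\delta_{x_0}$ is not a global minimizer, which says nothing about whether nucleating a \emph{small} singular part on top of $\rhou$ lowers the energy; and the admissible class $\Psym(\bbs^\dm)$ in \eqref{eqn:Psym} explicitly \emph{contains} measures $\alpha\delta_{x_0}+(1-\alpha)\rho\,\dS$ with $\alpha>0$, so it does not confine perturbations to the regular component. In the fast-diffusion regime this check is not cosmetic: the whole point of the paper is that the entropy stays finite on singular measures, and for suitable $m$ and large $\kappa$ the ground state genuinely contains a Dirac delta (Theorem~\ref{thm:global-min}), so the singular directions are exactly where instability could in principle hide. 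The paper closes this by a separate first-order computation: for one-sided perturbations $\mu^\epsilon=\alpha^\epsilon\nu+(1-\alpha^\epsilon)\rhou\,\dS$ with $\delta\alpha>0$ and $\nu$ carrying a nontrivial singular part, one finds
\[
\frac{\d}{\d\epsilon}E[\mu^\epsilon]\Big|_{\epsilon=0+}=\frac{m\,(\delta\alpha)\,\rhou^{m-1}}{1-m}\int_{\bbs^\dm}\d\nu_s>0,
\]
independently of $\kappa$, so creating a singular part always increases the energy at first order and stability is indeed decided by the absolutely continuous perturbations. Your proof needs this (short) argument to be complete; as written, the ``if'' direction of the proposition is not established.
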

\begin{proof}
We consider two types of perturbations of the uniform distribution: i) absolutely continuous perturbations, and ii) perturbations that contain a singular part.

{\em Case i) Absolutely continuous perturbations.} First consider perturbations $\{ \rho^\epsilon \} \subset \calP_{ac}(\bbs^\dm) $ of the uniform distribution, that are absolutely continuous:
\begin{equation}
\label{rho-eps-cond}
\rho^0 = \rhou, \qquad \int_{\bbs^\dm} \rhoeps(x) \dx  = 1, \quad \text{ for all } \epsilon.
\end{equation}
The energy of $\rho^\epsilon$ is given by (use \eqref{eqn:energy-cmform}):
\[
E[\rhoeps]= \frac{1}{m-1}\int_{\bbs^\dm}\rhoeps(x)^m \dx-\frac{\kappa}{2}\iint_{\bbs^{\dm}\times \bbs^{\dm}}\langle x, y\rangle \rhoeps(x)\rhoeps(y)\dS(x)\dS(y)+\frac{\kappa}{2}.
\]
The uniform distribution is a local minimizer with respect to absolutely continuous perturbations provided 
\begin{equation}
\label{eqn:localmin}
\frac{\d^2}{\d \epsilon^2} E[\rhoeps]_{\big|_{\epsilon = 0}}\geq 0,
\end{equation}
for all perturbations $\rho^\epsilon$ that satisfy \eqref{rho-eps-cond}.

The stability calculations in this case are identical to those carried out in \cite{FePaVa2025} for slow diffusion, and we will only summarize them here. Denote 
\[
\delta \rhoeps = \frac{\d }{\d \epsilon} \rhoeps, \qquad \text{ and } \qquad \delta^2 \rhoeps = \frac{\d^2 }{\d \epsilon^2} \rhoeps.
\]
Using the unit mass condition in \eqref{rho-eps-cond}, we note that 
\begin{equation}
\label{eqn:zero-int}
\int_{\bbs^\dm} \delta \rhoeps(x) \dx  = 0, \qquad \text{ and } \qquad \int_{\bbs^\dm} \delta^2 \rhoeps(x) \dx  = 0, \qquad \textrm{ for all } \epsilon.
\end{equation}

By direct calculations, one finds
\begin{equation}
\label{eqn:d1Eeps}
\frac{\d}{\d \epsilon} E[\rhoeps] 
= \frac{1}{m-1}\int_{\bbs^\dm} m \rhoeps(x)^{m-1} \delta \rhoeps(x) \dx -\kappa \iint_{\bbs^\dm\times \bbs^\dm} \langle x,y \rangle \, \rhoeps(x) \delta \rhoeps(y) \dx \dy,
\end{equation}
and
\begin{equation}
\label{eqn:d2Eeps}
\begin{aligned}
\frac{\d^2}{\d \epsilon^2} E[\rhoeps] &= m \int_{\bbs^\dm} \rhoeps(x)^{m-2} \left( \delta \rhoeps(x)\right)^2 \dx + \frac{m}{m-1}\int_{\bbs^\dm} \rhoeps(x)^{m-1} \delta^2 \rhoeps(x) \dx \\[5pt]
& \quad -\kappa \iint_{\bbs^\dm\times \bbs^\dm} \langle x,y \rangle (\delta \rhoeps(x) \delta \rhoeps(y) + \rhoeps(x) \delta^2 \rhoeps(y) ) \dx \dy.
\end{aligned}
\end{equation}

Evaluate \eqref{eqn:d1Eeps} and \eqref{eqn:d2Eeps} at $\epsilon = 0$.  Using that $\rho^0=\rhou$ is constant, that $\delta \rhoeps(x)$ integrates to $0$, and also that the centre of mass of $\rho^0$ is at the origin, we find $\displaystyle \frac{\d}{\d \epsilon} E[\rhoeps]_{\big| {\epsilon = 0}} = 0$ -- as expected, since $\rhou$ is a critical point of the energy. Similarly, using that $\rho^0=\rhou$ is constant, that $\delta^2 \rhoeps(x)$ integrates to $0$, and that the centre of mass of $\rho^0$ is at the origin, we find that two of the terms in the r.h.s. of \eqref{eqn:d2Eeps} vanish:
\[
\int_{\bbs^\dm} \rho^0(x)^{m-1} \delta^2 \rho^0(x) \dx = \frac{1}{|\bbs^\dm |^{m-1}}  \int_{\bbs^\dm} \delta^2 \rho^0(x) \dx = 0.
\]
and
\begin{align*}
\iint_{\bbs^\dm\times \bbs^\dm} \langle x,y \rangle \rho^0(x) \delta^2 \rho^0(y) \dx \dy &=\bigg \langle \int_{\bbs^\dm} x \rho^0(x) \dx, \int_{\bbs^\dm} y \, \delta^2 \rho^0(x) \dy \bigg \rangle \\
&=0.
\end{align*}

Therefore, by dropping the superscript $0$ and by denoting $\delta \rho = \delta \rhoeps_{\, | \epsilon = 0}$, we find from \eqref{eqn:d2Eeps}:
\begin{equation}
\label{eqn:d2Eeps0}
\begin{aligned}
\frac{\d^2}{\d \epsilon^2} E[\rhoeps]_{\big| {\epsilon = 0}}&= \frac{m}{|\bbs^\dm|^{m-2}} \int_{\bbs^\dm}  \left( \delta \rho(x) \right)^2 \dx -\kappa \left \| \int_{\bbs^\dm} x \delta \rho(x) \dx \right \|^2.
\end{aligned}
\end{equation}
For \eqref{eqn:localmin} to hold, the r.h.s. above has to be non-negative, i.e., 
\[
\kappa \leq \frac{m}{|\bbs^\dm|^{m-2}} \cdot \frac{\int_{\bbs^\dm} \left( \delta \rho(x) \right)^2 \dx}{\left | \int_{\bbs^\dm} x \delta \rho(x) \dx \right|^2},
\]
for all perturbations $\delta \rho$ that have zero mass.

Define $\calA = \{ \psi \in L^2(\bbs^\dm): \int_{\bbs^\dm} \psi(x) \dx = 0\}$ and the following functional on $\calA$:
\begin{equation}
\label{eqn:calF}
\calF [\psi] = \frac{\int_{\bbs^\dm} \left( \psi(x) \right)^2 \dx}{\left | \int_{\bbs^\dm} x \psi(x) \dx \right|^2}.
\end{equation}
The uniform distribution is a local minimum (and hence, stable) with respect to absolutely continuous perturbations, provided
\begin{equation}
\label{eqn:cond-kappa}
\kappa \leq \frac{m}{|\bbs^\dm|^{m-2}} \cdot \inf_{\psi \in \calA} \calF[\psi],
\end{equation}
and unstable otherwise. 

The following lemma is proved in \cite{FePaVa2025}, and we simply list it here.
\begin{lemma}[Lemma 3.1, \cite{FePaVa2025}]
\label{lem:minF}
Let $\mathcal{F}: \calA \to \bbr$ be given by \eqref{eqn:calF}. Then,
\[
\inf_{\psi \in \calA} \calF[\psi] =\frac{\dm+1}{|\bbs^\dm|}.
\]  
\end{lemma}

By Lemma \ref{lem:minF}, \eqref{eqn:cond-kappa} holds only for $\kappa \leq \kappa_1$, and this shows the claimed stability with respect to absolutely continuous perturbations.
\medskip

{\em Case ii) Singular perturbations.} Consider a singular perturbation $\{\mu^\epsilon\}_{\epsilon>0} \subset \calP(\bbs^\dm)$ of the form
\[
\mu^\epsilon = \alpha^\epsilon \nu + (1-\alpha^\epsilon) \rhou \d S,
\]
where $\nu=\nu_s+\tilde{\rho}\d S\in \mathcal{P}(\bbs^\dm)$, with $\nu_s$ and $\tilde{\rho}\d S$ being the singular and the absolutely continuous parts of $\nu$, respectively. Also, $\{\alpha^\epsilon\}_{\epsilon>0}$ satisfy
\[
\alpha^0 = 0, \qquad \text{ and } \qquad \delta \alpha := \frac{\d }{\d \epsilon} \alpha^\epsilon_{\; \big|_{\epsilon = 0+}}> 0.
\]
Note that these are one-sided perturbations with respect to $\alpha$, as $\alpha=0$ is a boundary value of the interval $(0,1)$.

Then, by \eqref{eqn:energy-s} we have
\[
E[\mu^\epsilon]=\frac{1}{m-1}\int_{\bbs^{\dm}}\left(\alpha^\epsilon \tilde{\rho}(x)+(1-\alpha^\epsilon)\rho_{\text{uni}}(x)\right)^m \d S-\frac{\kappa}{2}(\alpha^\epsilon\|c_{\nu}\|)^2+\frac{\kappa}{2},
\]
and its derivative at $\epsilon=0+$ is
\[
\frac{\d}{\d \epsilon} E[\mu^\epsilon]_{\big|_{\epsilon = 0+}} = \frac{m}{1-m} \delta \alpha \int_{\bbs^\dm} \rho^{m-1}_{\text{uni}}(x)\left(\rho_{\text{uni}}(x)-\tilde{\rho}(x)\right) \dS.
\]
Since $\rho_{\text{uni}}$ is constant, we can simplify the above as
\[
\frac{\d}{\d \epsilon} E[\mu^\epsilon]_{\big|_{\epsilon = 0+}} = \frac{m(\delta \alpha)\rho_{\text{uni}}^{m-1}}{1-m} \int_{\bbs^\dm} \left(\rho_{\text{uni}}(x)-\tilde{\rho}(x)\right) \dS= \frac{m(\delta \alpha)\rho_{\text{uni}}^{m-1} \int_{\bbs^\dm}\d \nu_s }{1-m},
\]
where we used $\rhou, \nu\in\mathcal{P}(\bbs^\dm)$. 

As we considered singular perturbations $\nu$, we have $\int_{\bbs^\dm}\d \nu_s  >0$. Therefore, 
\begin{equation}
\label{eqn:dEmueps}
\frac{\d}{\d \epsilon} E[\mu^\epsilon]_{\big|_{\epsilon = 0+}}>0,
\end{equation}
and we conclude that the energy of the uniform distribution increases with the formation of a singular measure.

More generally, if we consider perturbations of the form
\begin{equation}
\label{eqn:mueps-gen}
\mu^\epsilon = \alpha^\epsilon \nu + (1-\alpha^\epsilon) \rho^\epsilon \d S,
\end{equation}
where
\[
\alpha^0 = 0, \quad \rho^0 = \rhou, \quad \delta \alpha := \frac{\d }{\d \epsilon} \alpha^\epsilon_{\; \big|_{\epsilon = 0+}}> 0, \quad \text{ and } \quad \int_{\bbs^\dm} \rhoeps(x) \dx  = 1,
\]
then the considerations follow the previous calculations. Indeed, by taking a derivative of $E[\mu^\epsilon]$ with respect to $\epsilon$, and evaluate at $\epsilon=0+$, we find again \eqref{eqn:dEmueps}, as the contributions from the variations $\rhoeps$ lead to $0$.

We conclude that the energy increases by perturbing $\rhou$ with a singular measure. Note that this fact applies to all values of $\kappa$. Hence, the stability of $\rhou$ for different ranges of $\kappa$ is as found using absolutely continuous perturbations.
\end{proof} 

We will find in the next section that for $\kappa<\kappa_1$, the uniform distribution is the only critical point of the energy (and hence, a global minimizer), and at $\kappa=\kappa_1$ a bifurcation occurs.


\section{Critical values of $\kappa$ and phase transitions}
\label{sect:bifurcations-ml1}
In this section, we will identify various critical values of $\kappa$ at which equilibria experience phase transitions, for several ranges of $m$ within $0<m<1$.

\subsection{Bifurcation from the uniform distribution} 
\label{subsect:fs}
 We first study the bifurcation that occurs at $\kappa = \kappa_1$ (cf., Proposition \ref{prop:stab-unif}). We consider the fully supported equilibrium \eqref{eqn:equil-fs}, with $\lambda$ and $\|c_\rho\|$ that solve \eqref{eqn:system-fs-ml1}, where in this case, $-\lambda \geq \kappa \|c_\rho\|$.

For a simpler notation, we denote
\begin{equation}
\label{eqn:slambda}
  s=\|c_\rho\|, \qquad \lambda=-\kappa s\eta.
\end{equation}
To guarantee the condition on $\lambda$, we assume $\eta\geq 1$. The compatibility conditions \eqref{eqn:system-fs-ml1} for mass and centre of mass become
\begin{align*}
1&=\dm w_\dm \left(\frac{m}{(1-m)\kappa s}\right)^{\frac{1}{1-m}}  \int_0^\pi \left(\eta - \cos\theta \right)^{\frac{1}{m-1}}\sin^{\dm-1}\theta \, \d\theta, \\[5pt]
s&=\dm w_\dm \left(\frac{m}{(1-m) \kappa s}\right)^{\frac{1}{1-m}}  \int_0^\pi \left( \eta- \cos\theta \right)^{\frac{1}{m-1}} \sin^{\dm-1}\theta \cos\theta \, \d\theta.
\end{align*}
From the above, we find $s$ and $\kappa$ in terms of $\eta$ as
\begin{subequations}
\label{eqn:skappa-eta}
\begin{align}
 s&=\frac{\int_0^\pi (\eta-\cos\theta)^{\frac{1}{m-1}}\sin^{\dm-1}\theta \cos\theta\d\theta}{\int_0^\pi (\eta-\cos\theta)^{\frac{1}{m-1}}\sin^{\dm-1}\theta\d\theta},\label{eqn:s-eta} \\[2pt]
 \kappa^{-1}&=\displaystyle \frac{1-m}{m} (\dm w_\dm)^{m-1} \left(\int_0^\pi (\eta-\cos\theta)^{\frac{1}{m-1}}\sin^{\dm-1}\theta \cos\theta\d\theta\right)
\left(\int_0^\pi (\eta-\cos\theta)^{\frac{1}{m-1}}\sin^{\dm-1}\theta\d\theta\right)^{m-2}. \label{eqn:kappa-eta}
\end{align}
\end{subequations}

We define the following function in the range $\eta \geq 1$: 
\begin{equation}
\label{eqn:H}
H(\eta)=  \frac{1-m}{m} (\dm w_\dm)^{m-1} \left(\int_0^\pi (\eta-\cos\theta)^{\frac{1}{m-1}}\sin^{\dm-1}\theta \cos\theta\d\theta\right)
\left(\int_0^\pi (\eta-\cos\theta)^{\frac{1}{m-1}}\sin^{\dm-1}\theta \d\theta\right)^{m-2}.
\end{equation}
Equation \eqref{eqn:kappa-eta} can then be written as
\[
H(\eta) = \kappa^{-1}.
\]

\begin{lemma}
\label{lem:H-monotone} 
The function $H(\eta)$ defined in \eqref{eqn:H} for $\eta \geq 1$, is strictly increasing for $1 - \frac{2}{d - 1} < m < 1$, strictly decreasing for $0 < m < 1 - \frac{2}{d - 1}$, and constant when $m = 1 - \frac{2}{d - 1}$.
\end{lemma}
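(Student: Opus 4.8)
Here is the plan.

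\textbf{Reduction to a single sign.} Write $I_k(\eta)=\int_0^\pi(\eta-\cos\theta)^{\frac1{m-1}}\sin^{\dm-1}\theta\cos^k\theta\,\d\theta$ and let $J_k(\eta)$ denote the same integral with exponent $\frac1{m-1}$ replaced by $\frac1{m-1}-1$, so that $H=\frac{1-m}{m}(\dm w_\dm)^{m-1}I_1I_0^{m-2}$. Since $(\eta-\cos\theta)^{\frac1{m-1}}$ is a strictly decreasing function of $\theta$ on $[0,\pi]$, one has $I_0>0$ and (folding $[\tfrac\pi2,\pi]$ onto $[0,\tfrac\pi2]$) $I_1>0$; hence $H>0$ and $\operatorname{sign}H'(\eta)=\operatorname{sign}\!\big(\tfrac{I_1'}{I_1}+(m-2)\tfrac{I_0'}{I_0}\big)$. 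Differentiating under the integral gives $I_k'=\tfrac1{m-1}J_k$, so $\operatorname{sign}H'=\operatorname{sign}\!\big(\tfrac1{m-1}\Phi\big)=-\operatorname{sign}\Phi$, where $\Phi:=J_1I_0+(m-2)I_1J_0$. Thus it suffices to prove $\Phi<0$ on $1-\tfrac2{\dm-1}<m<1$, $\Phi>0$ on $0<m<1-\tfrac2{\dm-1}$, and $\Phi=0$ at $m=1-\tfrac2{\dm-1}$.

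\textbf{A transparent form for $\Phi$.} I would use two identities: the algebraic relation $J_1=\eta J_0-I_0$ (and $J_2=\eta J_1-I_1$), from $\cos\theta=\eta-(\eta-\cos\theta)$; and the integration-by-parts identity $J_0-J_2=\int_0^\pi(\eta-\cos\theta)^{\frac1{m-1}-1}\sin^{\dm+1}\theta\,\d\theta=\dm(1-m)I_1$, where the boundary terms vanish because $\sin^{\dm}\theta$ does at $\theta=0,\pi$. Symmetrising the double integral representing $\Phi$ in its two variables and inserting these identities, I expect to arrive at
\[
\Phi=J_0^2\big[(2-m)\operatorname{Var}_{\tilde f}(\cos\theta)+(m-1)\,\bar c\,(\eta-\bar c)\big]=J_0^2\big[(1-\bar c^2)-\beta(1-\eta\bar c)\big],
\]
where $\tilde f$ is the probability density on $[0,\pi]$ proportional to $(\eta-\cos\theta)^{\frac1{m-1}-1}\sin^{\dm-1}\theta$, $\bar c:=J_1/J_0=\mathbb E_{\tilde f}[\cos\theta]\in(0,1)$, $\eta-\bar c=I_0/J_0>0$, and $\beta:=\frac{(\dm+1)(m-1)}{\dm(m-1)+1}$. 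So $\Phi/J_0^2$ is a downward parabola in the single "transcendental'' quantity $\bar c$.

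\textbf{The critical exponent.} Setting $\eta-\cos\theta=\|X-q\|^2/(2\|q\|)$ with $X\in\bbs^\dm$, $\langle X,x_0\rangle=\cos\theta$, and $q=(\eta-\sqrt{\eta^2-1})\,x_0$ in the open unit ball, the weights $(\eta-\cos\theta)^{\frac1{m-1}}\sin^{\dm-1}\theta\,\d\theta$ and $(\eta-\cos\theta)^{\frac1{m-1}-1}\sin^{\dm-1}\theta\,\d\theta$ become (constant multiples, in $X$, of) $\|X-q\|^{2/(m-1)}\,\d S$ and $\|X-q\|^{2/(m-1)-2}\,\d S$. At $m=m_c:=1-\tfrac2{\dm-1}$ the exponents are exactly $-(\dm-1)$ and $-(\dm+1)$: by Newton's theorem $\int_{\bbs^\dm}\|X-q\|^{-(\dm-1)}\,\d S$ is independent of $q$ in the ball, and by the Poisson formula $\int_{\bbs^\dm}\|X-q\|^{-(\dm+1)}\,\d S=|\bbs^\dm|/(1-\|q\|^2)$; differentiating these identities in $q$ (equivalently, expanding $\|X-q\|^{-(\dm\mp1)}$ in zonal harmonics) gives $\bar c=\|q\|$ at $m=m_c$, whence $1-\bar c^2=2(1-\eta\bar c)$, while $\beta=2$ there. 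Therefore $\Phi|_{m=m_c}=0$, i.e.\ $H$ is constant when $m=m_c$.

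\textbf{The strict cases.} For $m\ne m_c$ one must show that $\Phi$ keeps the sign of $m_c-m$ for every $\eta\ge1$. I would do this by differentiating in $m$: passing to the natural parameter $p=\tfrac1{m-1}$ of the exponential family $\tilde f_p\propto(\eta-\cos\theta)^{p-1}\sin^{\dm-1}\theta$, one has $\tfrac{\d}{\d p}\mathbb E_{\tilde f_p}[\varphi]=\operatorname{Cov}_{\tilde f_p}\!\big(\varphi,\log(\eta-\cos\theta)\big)$; in particular $\bar c_p$ is \emph{strictly} decreasing in $p$, because $\log(\eta-\cos\theta)$ is a strictly decreasing function of $\cos\theta$. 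Combining these derivative formulas with $m=1+1/p$ reduces the claim to a definite-sign inequality for a combination of the first, second and third $\tilde f_p$-moments of $\cos\theta$, which one then verifies. This global sign control is the step I expect to be the main obstacle (the local behaviour near $m_c$ is immediate, since $\Phi$ has a simple zero there); minor care is also needed at $\eta=1$, where several of the integrals diverge for $m$ near $m_c$ and one argues by continuity from $\eta>1$, and at $m=1-1/\dm$, a removable singularity of $\beta$ handled by keeping the equivalent form $\Phi=(J_0^2-J_1^2)-(\dm+1)(1-m)I_1J_0$.
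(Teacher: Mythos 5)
Your reduction is sound and I can verify the algebra: with $I_k,J_k$ as you define them, $\operatorname{sign}H'=-\operatorname{sign}\Phi$ where $\Phi=J_1I_0+(m-2)I_1J_0$, and using $I_0=\eta J_0-J_1$ together with $J_0-J_2=\dm(1-m)I_1$ one indeed gets $\Phi/J_0^2=(1-\bar c^2)-\beta(1-\eta\bar c)$ with $\beta=\frac{(\dm+1)(m-1)}{\dm(m-1)+1}$, equivalently $\Phi=(J_0^2-J_1^2)-(\dm+1)(1-m)I_1J_0$. Your treatment of the borderline case is also correct and genuinely different from the paper's: at $m=1-\frac{2}{\dm-1}$ the weight $(\eta-\cos\theta)^{\frac1{m-1}-1}$ becomes the Poisson kernel with pole $q=(\eta-\sqrt{\eta^2-1})x_0$, harmonicity of linear functions gives $\bar c=\|q\|$, and then $1-\bar c^2=2(1-\eta\bar c)$ while $\beta=2$, so $\Phi\equiv0$. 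That is an elegant, self-contained proof that $H$ is constant at the critical exponent.

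The gap is exactly where you flag it: the two strict cases, which are the substantive content of the lemma, are not proved. Monotonicity of $p\mapsto\bar c_p$ (which follows from the covariance identity, since $\log(\eta-\cos\theta)$ is decreasing in $\cos\theta$) does not by itself control the sign of $(1-\bar c^2)-\beta(1-\eta\bar c)$: both $\bar c$ and $\beta$ move with $m$, $\beta$ passes through a pole at $m=1-\frac1\dm$ inside the range where you must show $\Phi<0$, and the ``definite-sign inequality for a combination of the first, second and third moments'' is never written down, let alone verified for every $\eta\ge1$. It is not evident that this closes: one needs a uniform-in-$\eta$ comparison, and the quantity $\bar c$ is transcendental in $(\eta,m,\dm)$. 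The paper's proof is built precisely to avoid this difficulty. It reduces to the sign of a $2\times2$ determinant of $\theta$-integrals (its $\mathcal{H}_1$, equivalent to your $\Phi$ after the integration by parts $I_1=\frac{1}{\dm(1-m)}\int_0^\pi(\eta-\cos\theta)^{\frac1{m-1}-1}\sin^{\dm+1}\theta\,\d\theta$), then uses the integral representation of the associated Legendre functions $P_\mu^\nu(\cosh\beta)$ to rewrite all four integrals as integrals over $t\in(0,\beta)$ against a \emph{common} positive measure $\d\xi$; in those variables the determinant becomes $\int f\,\d\xi\int g\,\d\xi-\int fg\,\d\xi\int\d\xi$ with $f(t)=\cosh\beta-\cosh t$ decreasing and $g(t)=\cosh|A-\tfrac12|t/\cosh|A+\tfrac12|t$ monotone with sign of monotonicity governed by $A=\frac1{m-1}+\frac{\dm-1}{2}$, so Chebyshev's integral inequality gives the strict sign pointwise in $(\eta,m)$ with no need to compare across $m$. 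In the original $\theta$-variables no such co-monotone pair is visible, which is why your Step 4 stalls. To complete your route you would need to either find the analogous correlation structure for $\Phi$ directly, or carry out and verify the moment inequality you defer; as written, the lemma's main claim remains unproved.
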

\begin{proof}
The proof is quite lengthy and nontrivial. It involves investigating the sign of $H'(\eta)$ using associated Legendre functions of the first kind and the Chebyshev inequality; see Appendix \ref{appendix:H-monotone}.
\end{proof}

\begin{remark}
\label{rmk:d12}
Note that $\dm=1$ and $\dm=2$ in Lemma \ref{lem:H-monotone} are special, in the sense that for these cases, $H(\eta)$ is strictly decreasing for all $0<m<1$.
\end{remark}

\begin{lemma}
\label{lem:Hlim}
The following holds:
\[
\Bigl( \lim_{\eta \to \infty } H(\eta)\Bigr)^{-1} = \kappa_1,
\]
where $\kappa_1$ is the critical $\kappa$ identified in Proposition \ref{prop:stab-unif}. 
\end{lemma}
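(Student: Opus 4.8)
The plan is to read off the limit directly from the definition \eqref{eqn:H}, by extracting the leading asymptotics of each of the two $\theta$-integrals as $\eta\to\infty$, and then to identify the resulting constant with $\kappa_1^{-1}$ via elementary Beta-function identities. First I would normalize: writing $(\eta-\cos\theta)^{\frac{1}{m-1}}=\eta^{\frac{1}{m-1}}\bigl(1-\tfrac{\cos\theta}{\eta}\bigr)^{\frac{1}{m-1}}$ and observing that for $\eta\geq 2$ the base $1-\cos\theta/\eta$ stays in a fixed compact subinterval of $(0,\infty)$ uniformly in $\theta\in[0,\pi]$, one has the uniform expansion $\bigl(1-\tfrac{\cos\theta}{\eta}\bigr)^{\frac{1}{m-1}}=1+\tfrac{\cos\theta}{(1-m)\eta}+O(\eta^{-2})$, with the remainder bounded uniformly in $\theta$. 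For the ``denominator'' integral this already gives
\[
\eta^{-\frac{1}{m-1}}\int_0^\pi(\eta-\cos\theta)^{\frac{1}{m-1}}\sin^{\dm-1}\theta\,\d\theta\;\longrightarrow\;\int_0^\pi\sin^{\dm-1}\theta\,\d\theta=:A .
\]

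For the ``numerator'' integral the $O(1)$ term vanishes by parity, since $\int_0^\pi\sin^{\dm-1}\theta\cos\theta\,\d\theta=0$, so the value of the limit comes entirely from the $O(1/\eta)$ correction:
\[
\eta^{\,1-\frac{1}{m-1}}\int_0^\pi(\eta-\cos\theta)^{\frac{1}{m-1}}\sin^{\dm-1}\theta\cos\theta\,\d\theta\;\longrightarrow\;\frac{1}{1-m}\int_0^\pi\sin^{\dm-1}\theta\cos^2\theta\,\d\theta .
\]
Assembling the product in \eqref{eqn:H}, the powers of $\eta$ cancel exactly, since $\tfrac{1}{m-1}-1+\tfrac{m-2}{m-1}=0$, so the limit is finite and equals
\[
\lim_{\eta\to\infty}H(\eta)=\frac{1-m}{m}\,(\dm w_\dm)^{m-1}\cdot\frac{1}{1-m}\Bigl(\int_0^\pi\sin^{\dm-1}\theta\cos^2\theta\,\d\theta\Bigr)\cdot A^{m-2}.
\]

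To finish I would evaluate the two remaining elementary integrals. Using the reduction formula $\int_0^\pi\sin^{n}\theta\,\d\theta=\tfrac{n-1}{n}\int_0^\pi\sin^{n-2}\theta\,\d\theta$ together with $\cos^2\theta=1-\sin^2\theta$ gives $\int_0^\pi\sin^{\dm-1}\theta\cos^2\theta\,\d\theta=\tfrac{1}{\dm+1}A$, while hyperspherical coordinates (as already used for \eqref{eqn:system-fs-ml1}, with $|\bbs^{\dm-1}|=\dm w_\dm$) give $|\bbs^\dm|=\dm w_\dm\,A$. Substituting these in yields
\[
\lim_{\eta\to\infty}H(\eta)=\frac{1}{m(\dm+1)}(\dm w_\dm\,A)^{m-1}=\frac{|\bbs^\dm|^{m-1}}{m(\dm+1)}=\kappa_1^{-1},
\]
which is the assertion.

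The only delicate point is the second limit above: because the leading term of the numerator integral integrates to zero, the answer is governed by the next-order term, so one genuinely needs the expansion of $(1-\cos\theta/\eta)^{\frac{1}{m-1}}$ to be uniform in $\theta$ with a controlled remainder — equivalently, one shows $\eta\bigl[(1-\cos\theta/\eta)^{\frac{1}{m-1}}-1\bigr]\to\cos\theta/(1-m)$ uniformly on $[0,\pi]$ and passes to the limit under the integral (dominated convergence). This is routine once $\eta$ is large enough that the base is bounded away from $0$, so it is bookkeeping rather than a real obstacle.
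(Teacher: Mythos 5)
Your proof is correct and follows essentially the same route as the paper's (Appendix~C): factor $\eta^{\frac{1}{m-1}}$ out of each integral so that the prefactor becomes $\eta$, observe that the leading term of the numerator integral vanishes because $\int_0^\pi\sin^{\dm-1}\theta\cos\theta\,\d\theta=0$ so the limit is governed by the next-order coefficient $\tfrac{1}{1-m}\int_0^\pi\sin^{\dm-1}\theta\cos^2\theta\,\d\theta$, and then reduce to $\kappa_1^{-1}$ via $\int_0^\pi\sin^{\dm-1}\theta\cos^2\theta\,\d\theta=\tfrac{1}{\dm+1}\int_0^\pi\sin^{\dm-1}\theta\,\d\theta$ and $|\bbs^\dm|=\dm w_\dm\int_0^\pi\sin^{\dm-1}\theta\,\d\theta$. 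The only (cosmetic) difference is that the paper expands via the full generalized binomial series, uniformly convergent for $\eta>1$, whereas you use a two-term Taylor expansion with a uniform $O(\eta^{-2})$ remainder; both justifications are valid.
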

\begin{proof}
The proof follows a direct calculation -- see Appendix \ref{appendix:Hlim}.  
\end{proof}

\begin{remark}
\label{rmk:kappac-ml1}
In the limit $\eta\to \infty$ (along with $s \to 0$ and $\lambda \to \lambda_{\text{uni}}$), the equilibria approach the uniform distribution $\rho_{\text{uni}}$. As discussed below (Remark \ref{rmk:kappact}, Proposition \ref{prop:bif}), there is a bifurcation at $\kappa = \kappa_{1}$, where a fully supported equilibrium $\rho_\kappa$ gets born from the uniform distribution $\rhou$.
\end{remark}

The function $H(\eta)$ is plotted in Figure \ref{fig:H}; for plot (a) we used  $m=0.5$, $\dm=2$, for (b) we have $m=0.25$, $\dm=3$, and for plot (c) we used $m=0.3$, $\dm=5$. For both cases (a) and (b), $H(\eta)$ is increasing, but the cases are different in the behaviour $\eta \searrow 1$ -- see discussion below. Case (c) corresponds to $H(\eta)$ being decreasing; see Lemma \ref{lem:H-monotone}. 

{\em Limit $\eta \searrow 1$.} We investigate the convergence of the integrals that appear in the function $H$ -- note the singularity of $(1-\cos \theta)^{\frac{1}{m-1}}$ at $\theta =0.$ Near $\theta =0$, $(1-\cos\theta)^{\frac{1}{m-1}}\sin^{\dm-1} \theta$, as well as $(1-\cos\theta)^{\frac{1}{m-1}}\sin^{\dm-1} \theta \cos \theta$, behave as $\theta^{\frac{2}{m-1}+\dm-1}$. The integrals are convergent provided $\frac{2}{m-1}+\dm-1 >-1$, which is equivalent to $m< 1- \frac{2}{\dm}$; see also Remark \ref{rmk:rho-integr}.

Based on Lemma \ref{lem:H-monotone} and the limit $\eta \searrow 1$, we distinguish three cases: i) $1- \frac{2}{\dm} < m <1$, ii) $1- \frac{2}{\dm-1}<m< 1- \frac{2}{\dm}$, and iii) $0<m<1- \frac{2}{\dm-1}$. Note that in dimensions $\dm=1$ and $\dm=2$, only case i) applies (see also Remark \ref{rmk:d12}), while in dimension $\dm=3$, only cases i) and ii) apply.

\begin{figure}[!htbp]
 \begin{center}
\includegraphics[width=0.48\textwidth]{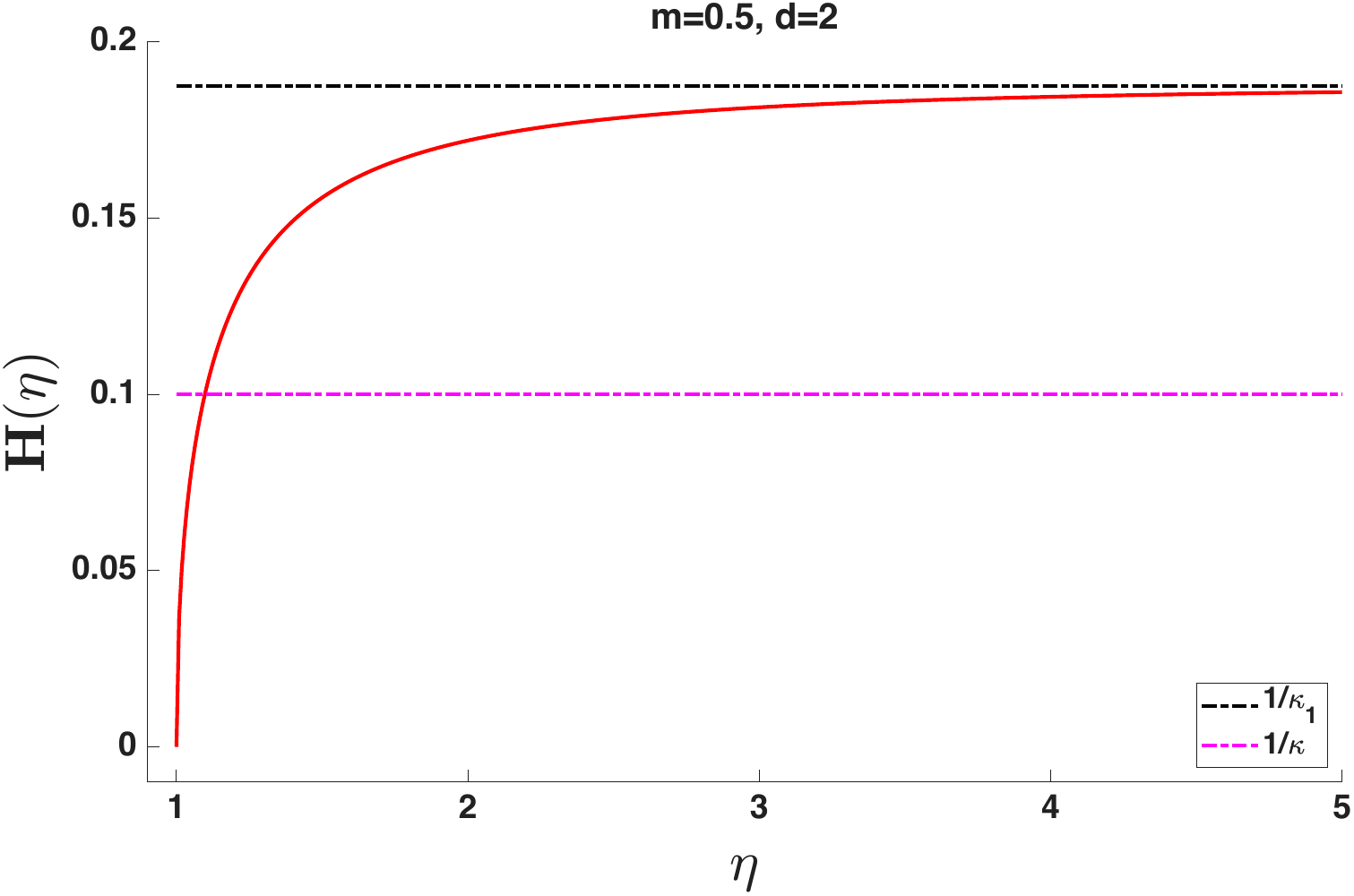} \\
  (a)
 \begin{tabular}{cc} 
 \includegraphics[width=0.48\textwidth]{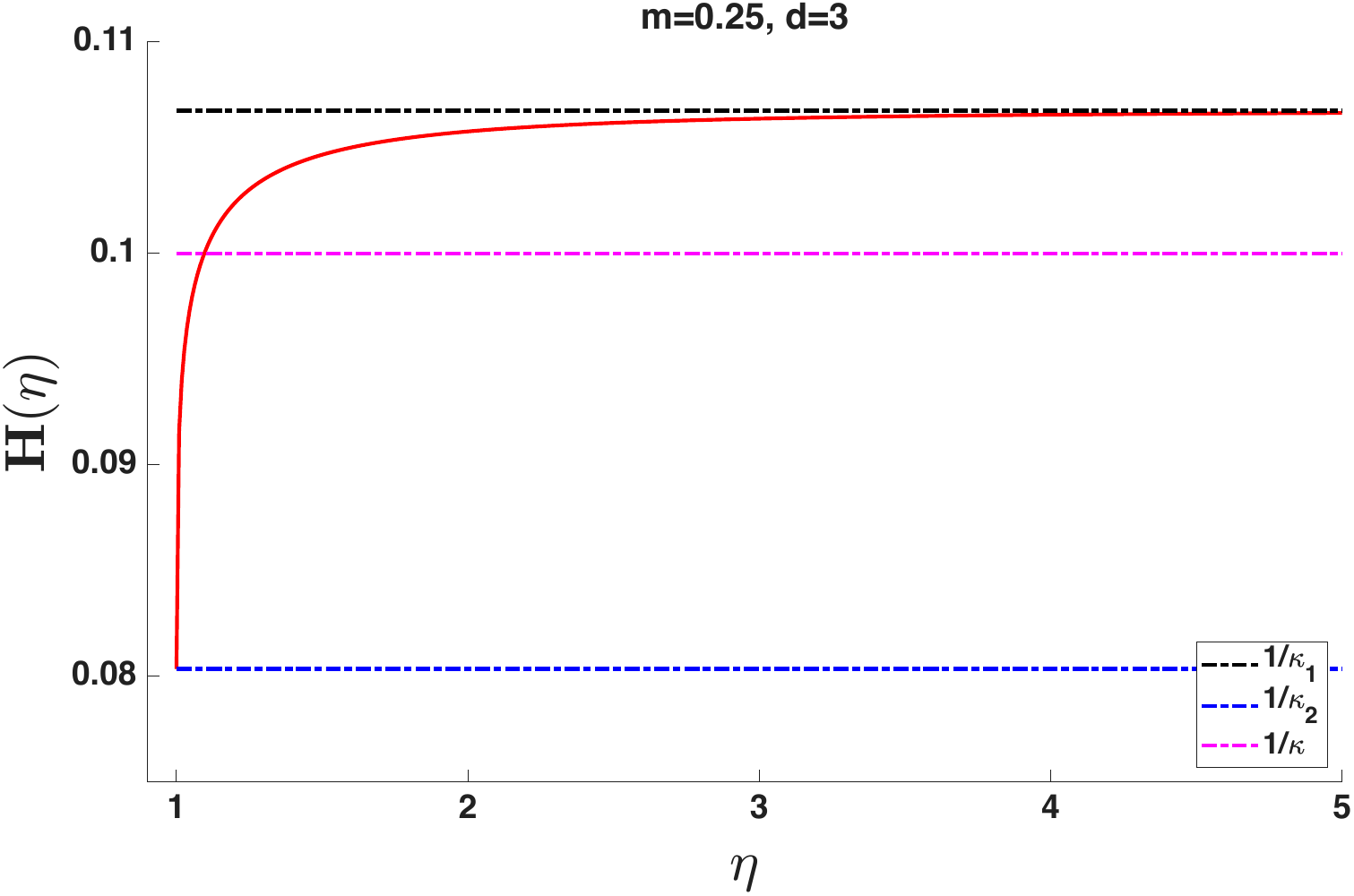} &
\includegraphics[width=0.48\textwidth]{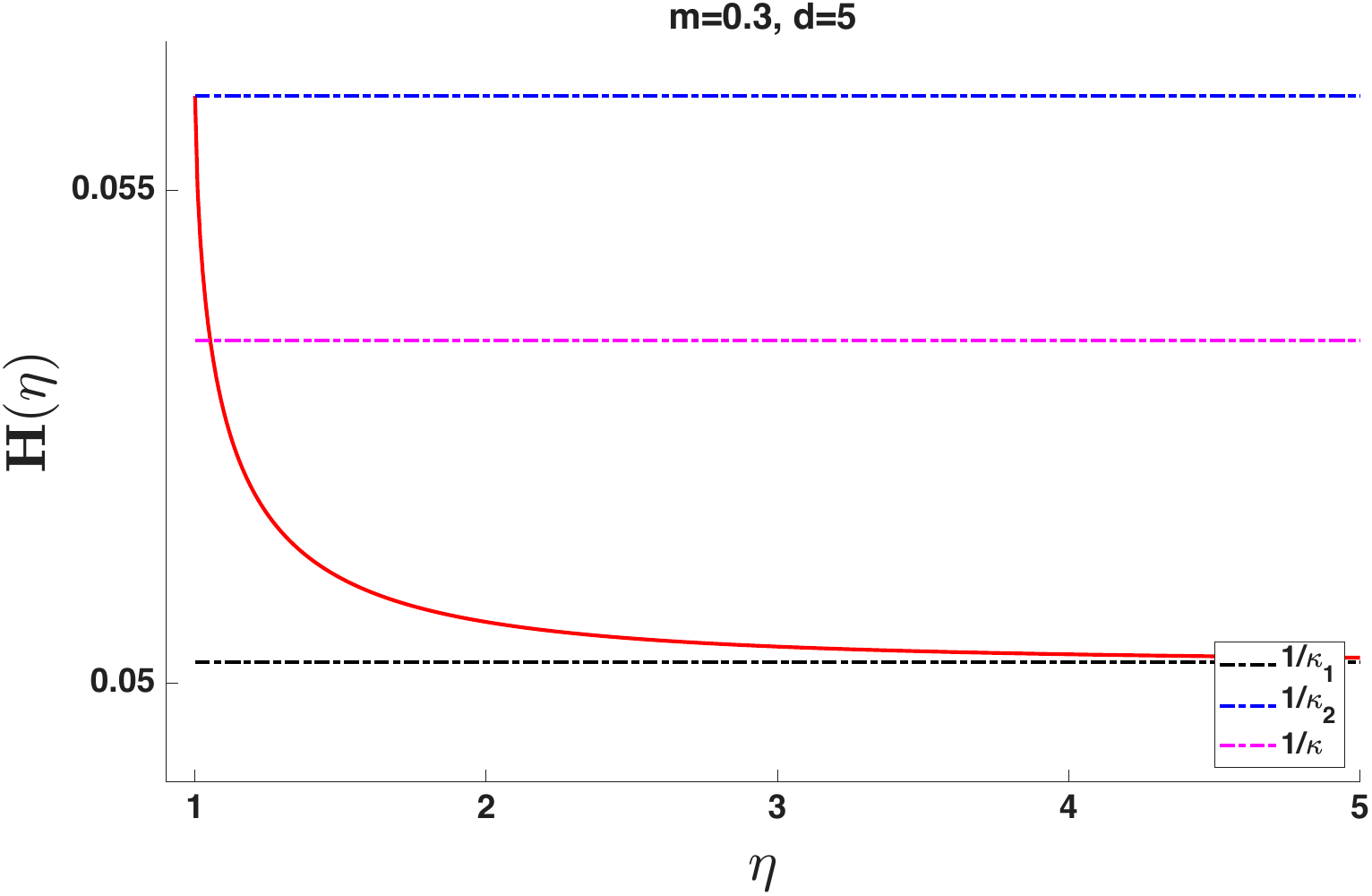}
 \\
 (b) & (c)
\end{tabular}
\caption{Plot of function $H$ defined in \eqref{eqn:H}. (a) $1-2/\dm<m<1$. For this range of $m$, $H(1) = 0$. For any $\kappa>\kappa_1$, there exists a unique $\eta_\kappa >1$ such that $\kappa^{-1} = H(\eta_\kappa)$ -- see equation \eqref{eqn:kappa-eta}. (b) $1-2/(\dm-1)<m<1-2/\dm$. In this case, $H(1) = 1/\kappa_2$, with $\kappa_2>\kappa_1$. For any $\kappa_1<\kappa<\kappa_2$, there exists a unique $\eta_\kappa>1$ that solves $\kappa^{-1} = H(\eta_\kappa)$. (c) $0<m<1-2/(\dm-1)$. In this case, $H(1) = 1/\kappa_2$, with $\kappa_2<\kappa_1$. For any $\kappa_2<\kappa<\kappa_1$, there exists a unique $\eta_\kappa>1$ that solves $\kappa^{-1} = H(\eta_\kappa)$. For plot (a) we used $m=0.5$ and $\dm =2$, for plot (b), $m=0.25$ and $\dm =3$, and for plot (c), $m=0.3$ and $\dm =3$.}
\label{fig:H}
\end{center}
\end{figure}

\medskip
{\em Case i) $1- \frac{2}{\dm} < m <1$. } Write 
\[
H(\eta)=  \frac{1-m}{m} (\dm w_\dm)^{m-1} \frac{\int_0^\pi (\eta-\cos\theta)^{\frac{1}{m-1}}\sin^{\dm-1}\theta \cos\theta\d\theta}
{\left(\int_0^\pi (\eta-\cos\theta)^{\frac{1}{m-1}}\sin^{\dm-1}\theta \d\theta\right)^{2-m}}.
\]
Due to the non-integrability of the integrands, the limit $\eta \searrow 1$ is of the type $\infty/\infty$. Write the above as 
\begin{align*}
H(\eta) =  \frac{1-m}{m} (\dm w_\dm)^{m-1} \frac{\int_0^\pi (\eta-\cos\theta)^{\frac{1}{m-1}}\sin^{\dm-1}\theta \cos\theta\d\theta}
{\int_0^\pi (\eta-\cos\theta)^{\frac{1}{m-1}}\sin^{\dm-1}\theta \d\theta } \times \frac{1}{\left( \int_0^\pi (\eta-\cos\theta)^{\frac{1}{m-1}}\sin^{\dm-1}\theta \d\theta\right)^{1-m}}.
\end{align*}
The quotient of the two integrals in the r.h.s. is bounded in absolute value by $1$, as
\[
\left | \int_0^\pi (\eta-\cos\theta)^{\frac{1}{m-1}}\sin^{\dm-1}\theta \cos\theta\d\theta \right | \leq \int_0^\pi (\eta-\cos\theta)^{\frac{1}{m-1}}\sin^{\dm-1}\theta \d\theta,
\]
and also,
\[
\lim_{\eta \searrow 1} \frac{1}{\left( \int_0^\pi (\eta-\cos\theta)^{\frac{1}{m-1}}\sin^{\dm-1}\theta \d\theta\right)^{1-m}} = 0,
\]
due to the non-integrability of the integrand in the limit.  

Hence, in case i) we have
\[
\lim_{\eta \searrow 1} H(\eta)=0.
\]
Figure \ref{fig:H}(a) corresponds to this case (for dimension $\dm=2$, all $0<m<1$ fall in this regime in fact). Together with Lemmas \ref{lem:H-monotone}
and \ref{lem:Hlim}, we infer that for any $\kappa>\kappa_{1}$, there is a unique $\eta_\kappa >1$ that solves $\kappa^{-1} = H(\eta_\kappa)$ -- see equation \eqref{eqn:kappa-eta}. This value of $\eta_\kappa$ determines uniquely the corresponding $s_\kappa$ from \eqref{eqn:s-eta}, and then set $\lambda_\kappa = -\kappa s_\kappa \eta_\kappa$. Together they determine a unique (fully supported) equilibrium in the form \eqref{eqn:equil-fs}. For the range of $m$ in case i), there is no second critical value of $\kappa$.
\medskip

{\em Case ii) $1- \frac{2}{\dm-1}<m< 1- \frac{2}{\dm}$.} In this case, the integrals that appear in $H(\eta)$ are convergent in the limit $\eta \searrow 1$, and hence, 
\[
\lim_{\eta \searrow 1} H(\eta)= H(1) >0.
\]
Denote by 
\begin{equation}
\label{eqn:kappa2}
\kappa_2: = \frac{1}{H(1)}.
\end{equation}
Since $H(\eta)$ is increasing on $\eta \in (1,\infty)$, we have $\kappa_2 >\kappa_1$ (see Lemma \ref{lem:Hlim}). We also infer that for $ 1- \frac{2}{\dm-1}<m< 1- \frac{2}{\dm}$ fixed, where $\dm \geq 3$, for any $\kappa$ in the range $\kappa \in (\kappa_1,\kappa_2)$, there exists a unique $\eta_\kappa>1$ that solves \eqref{eqn:kappa-eta} -- see Figure \ref{fig:H}(b) for an illustration that corresponds to this case. This value of $\eta_\kappa$ determines $s_\kappa$ by \eqref{eqn:s-eta} and then $\lambda_\kappa$, uniquely. This determines uniquely a fully supported equilibrium $\rho_\kappa$ in the form \eqref{eqn:equil-fs}. 
\medskip

{\em Case iii) $0<m< 1- \frac{2}{\dm-1}$.} Similar to case ii), the integrals in \eqref{eqn:H} are convergent. Define again $\kappa_2$ by \eqref{eqn:kappa2}; however, for this range of $m$, $H(\eta)$ is decreasing, so $\kappa_2 <\kappa_1$. In this case, for 
$0<m< 1- \frac{2}{\dm-1}$ fixed, with $\dm \geq 3$, for any $\kappa \in (\kappa_2,\kappa_1)$, there exists a unique $\eta_\kappa>1$ that solves \eqref{eqn:kappa-eta} -- see Figure \ref{fig:H}(c). Then, by \eqref{eqn:s-eta} one determines $s_\kappa$, and finally set $\lambda_\kappa = - \kappa s_\kappa \eta_\kappa$. Hence, a unique equilibrium $\rho_\kappa$ in the form \eqref{eqn:equil-fs} is determined.

\begin{remark}
\label{rmk:kappact} For any $0<m<1$, a fully supported equilibrium $\rho_\kappa$ gets born at $\kappa = \kappa_1$, from the uniform distribution $\rhou$. For $1- 2/\dm < m<1$ (case i), this equilibrium exists for all $\kappa > \kappa_1$, and no further bifurcation occurs. However, when $0<m< 1- 2/\dm$ (cases ii) and iii)), a transition occurs at $\kappa = \kappa_2$, where an equilibrium of the form \eqref{eqn:mu-alpha} (a measure with a singular component) gets born from $\rho_\kappa$.
\end{remark}

\begin{remark}\label{kappa2-exp}
The explicit expression of $\kappa_2$ can be obtained as
\[
\kappa_2=\frac{m \left(\frac{1}{m-1}+\dm \right)}{2^{1+(\dm-1)(m-1)}|\bbs^\dm|^{m-1}}\left(\frac{\Gamma\left(\frac{1}{2}\right)\Gamma\left(\frac{1}{m-1}+\dm\right)}{\Gamma\left(\frac{1}{m-1}+\frac{\dm}{2}\right)\Gamma\left(\frac{\dm+1}{2}\right)}\right)^{m-1}.
\]
The derivation is provided in Appendix \ref{appendix:kappa2}. Note that $0<m<1-\frac{2}{\dm}$ here, so $\frac{1}{m-1}+\dm > \frac{\dm}{2} >0$. As $m\nearrow1-\frac{2}{\dm}$, $\kappa_2\to+\infty$ since $\frac{1}{m-1}+\frac{\dm}{2}\searrow0+$, and this yields $\Gamma\left(\frac{1}{m-1}+\frac{\dm}{2}\right)\to\infty$ while $m-1<0$. This limiting behaviour is consistent with the fact that there is no second critical value $\kappa_2$ when $1-\frac{2}{\dm}\leq m<1$. 
\end{remark}

\subsection{Second critical value $\kappa = \kappa_2$} 
\label{subsect:mu}

We only consider here cases ii) and iii) from Section \ref{subsect:fs} and investigate the ranges $\kappa >\kappa_2$ and $\kappa<\kappa_2$, respectively (see \eqref{eqn:kappa2}).   

At $\kappa=\kappa_2$  we have $\eta_{\kappa_2}=1$ and $-\lambda_{\kappa_2} = \kappa_2 s_{\kappa_2} $, with $s_{\kappa_2}$ given by (see \eqref{eqn:s-eta}) 
\begin{equation}
\label{eqn:s-kgk2}
s_{\kappa_2} =\frac{\int_0^\pi (1-\cos\theta)^{\frac{1}{m-1}}\sin^{\dm-1}\theta \cos\theta\d\theta}{\int_0^\pi (1-\cos\theta)^{\frac{1}{m-1}}\sin^{\dm-1}\theta\d\theta}.
\end{equation}

Correspondingly, the equilibrium in the form \eqref{eqn:equil-fs} is given by
\begin{equation}
\label{eqn:rho-k2}
\rho_{\kappa_2}(x) = \left(\frac{m}{1-m}\right)^{\frac{1}{1-m}} (-\lambda_{\kappa_2})^{\frac{1}{m-1}}\left(1- \cos\theta_x\right)^{\frac{1}{m-1}}, \qquad\forall x\in \bbs^\dm.
\end{equation}
Using the first equation in \eqref{eqn:system-fs-ml1}, we find
\begin{equation}
\label{eqn:lambda-k2}
(-\lambda_{\kappa_2})^{\frac{1}{m-1}} = \frac{1}{\dm w_\dm} \left( \frac{m}{1-m} \right)^{\frac{1}{m-1}} \frac{1}{\int_0^\pi\bigl (1 -  \cos \theta \bigr)^{\frac{1}{m-1}} \sin^{\dm-1}\theta \, \d\theta},
\end{equation}
which used in \eqref{eqn:rho-k2} yields
\[
\rho_{\kappa_2}(x) = \frac{\bigl( 1 -  \cos \theta_x \bigr)^{\frac{1}{m-1}}}{\dm w_\dm \int_0^\pi\bigl (1 -  \cos \theta \bigr)^{\frac{1}{m-1}} \sin^{\dm-1}\theta \d \theta} , \qquad \forall x \in \bbs^\dm.
\]

Remarkably, this is the density (see \eqref{eqn:equil-rho-reg}) of the regular component of the measure-valued equilibrium in the form \eqref{eqn:mu-alpha}; also compare \eqref{eqn:s-kgk2} with \eqref{eqn:crho-kgk2} and note that \eqref{eqn:lambda-k2} corresponds to \eqref{eqn:mlambda} for $\alpha =0$. We infer that at $\kappa=\kappa_2$, an equilibrium $\mu_\alpha$ of the form \eqref{eqn:mu-alpha} gets born from the fully supported equilibrium $\rho_{\kappa_2}$. 

We investigate now the measure-valued equilibria in the form \eqref{eqn:mu-alpha}, for a fixed $\kappa>\kappa_2$ (case ii) or $\kappa<\kappa_2$ (case iii). Recall from Section \ref{sect:cp} that the density $\rho$ given by \eqref{eqn:equil-rho-reg} does not depend on $\kappa$. To avoid confusing notation, we redenote the density from \eqref{eqn:equil-rho-reg} by $\brho$:
\begin{equation}
\label{eqn:equil-brho}
\brho(x) = \frac{\bigl( 1 -  \cos \theta_x \bigr)^{\frac{1}{m-1}}}{\dm w_\dm \int_0^\pi\bigl (1 -  \cos \theta \bigr)^{\frac{1}{m-1}} \sin^{\dm-1}\theta \d \theta} , \qquad \forall x \in \bbs^\dm.
\end{equation}
Here, $\theta_x$ represents the angle measured from a fixed (but arbitrary) direction $x_0 \in \bbs^\dm$, which represents the normalized centre of mass of $\brho$:
\begin{equation}
\label{eqn:x_0}
x_0= \frac{c_{\brho}}{\|c_{\brho}\|}.
\end{equation}
We also denote
\begin{equation}
\label{eqn:srho}  
\srho = \|c_{\brho}\|,
\end{equation}
and corresponding to \eqref{eqn:crho-kgk2} we have
\begin{equation}
\label{eqn:srho-kgk2}
\srho =\frac{\int_0^\pi (1-\cos\theta)^{\frac{1}{m-1}}\sin^{\dm-1}\theta \cos\theta\d\theta}{\int_0^\pi (1-\cos\theta)^{\frac{1}{m-1}}\sin^{\dm-1}\theta\d\theta}.
\end{equation}

With notation \eqref{eqn:equil-brho}, the measure-valued equilibria \eqref{eqn:mu-alpha} has the form 
\begin{equation}
\label{eqn:mu-alpha-mod}
\mu_\alpha = \alpha \delta_{x_0} + (1-\alpha) \brho \, \d S.   
\end{equation}
The only parameter that needs to be determined (in terms of $\kappa$) is $\alpha$, the strength of the singular part. Using \eqref{eqn:mlambda} in \eqref{eqn:lambda-kappa} -- with $\brho$ instead of $\rho$, along with notation \eqref{eqn:srho}, we arrive at the following equation for $\alpha$:
\begin{equation}
\kappa  (\alpha + (1-\alpha)\srho) = (\dm w_\dm) ^{1-m} \, \frac{m(1-\alpha)^{m-1}}{1-m}\left( \int_0^\pi \bigl (1 -  \cos \theta \bigr)^{\frac{1}{m-1}} \sin^{\dm-1}\theta \, \d\theta \right)^{1-m}.
\label{eqn:find-alpha}
\end{equation}

By the considerations above, at $\kappa=\kappa_2$, a solution of \eqref{eqn:find-alpha} is $\alpha=0$. Hence, \eqref{eqn:find-alpha} can be simplified into
\begin{equation}
\label{eqn:alpha-fg}
\kappa(\srho+\alpha(1-\srho))=(1-\alpha)^{m-1}\kappa_2 \srho.
\end{equation}

Denote the l.h.s. and the r.h.s. of \eqref{eqn:alpha-fg} by
\begin{equation}
\label{eqn:fg}
f_\kappa(\alpha)=\kappa(\srho+\alpha(1-\srho)), \quad \text{ and } \quad g(\alpha)=(1-\alpha)^{m-1}\kappa_2 \srho.
\end{equation}
We are now concerned with finding the number of solutions $\alpha \in (0,1)$ to $f_\kappa(\alpha) = g(\alpha)$, for a given value of $\kappa$. Note that $f_\kappa$ is a linear function of $\alpha$, while $g$ is an increasing function of $\alpha$ which becomes infinite as $\alpha \to 1$. We also note that the expression for $\srho$ can be simplified using the Gamma function (see Appendix \ref{appendix:s}) to
\begin{equation}
\label{eqn:s-simple}
\srho = \frac{1}{(1-m)\dm-1}.
\end{equation}
Since $0<m<1-\frac{2}{\dm}$, it holds indeed that $0<\srho<1$.

\begin{lemma}
\label{lem:alpha}
Assume $\dm \geq 3$, and consider the ranges of $m$ from cases ii) and iii) in Section \ref{subsect:fs}; note that for $\dm =3$ only case ii) applies. Then, 
\begin{itemize}
\item Case $1-\frac{2}{\dm-1}<m<1-\frac{2}{\dm}$. For any $\kappa > \kappa_2$, equation \eqref{eqn:find-alpha} has a unique solution $\alpha_\kappa \in (0,1)$.

\item Case $0<m<1-\frac{2}{\dm-1}$. There exists a critical value $\kappa_3<\kappa_2$ such that: 
\begin{itemize}
    \item for $\kappa_3 <\kappa <\kappa_2$, there exist two solutions $0<\alphaone<\alphatwo<1$ of \eqref{eqn:find-alpha}. At $\kappa=\kappa_3$, the two solutions $\alphaone$ and $\alphatwo$ coincide. Also, as $\kappa \nearrow \kappa_2$ we have $\alphaone \searrow 0$.
    \item for $\kappa>\kappa_2$, \eqref{eqn:find-alpha} has a unique solution $\alphatwo \in (0,1)$.
\end{itemize}
\end{itemize}
\end{lemma}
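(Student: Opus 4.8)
The plan is to solve \eqref{eqn:alpha-fg} for $\kappa$ as a function of $\alpha$: for $\alpha \in (0,1)$, the identity $f_\kappa(\alpha) = g(\alpha)$ (with $f_\kappa, g$ from \eqref{eqn:fg}) holds if and only if $\kappa = \Phi(\alpha)$, where
\[
\Phi(\alpha) := \frac{(1-\alpha)^{m-1}\, \kappa_2 \, \srho}{\srho + \alpha(1-\srho)}, \qquad \alpha \in (0,1).
\]
Thus, for a fixed $\kappa$, the number of solutions $\alpha \in (0,1)$ of \eqref{eqn:find-alpha} equals the number of preimages of the level $\kappa$ under $\Phi$, and the whole lemma reduces to describing the graph of $\Phi$ on $(0,1)$.

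First I would record the endpoint behaviour: $\Phi(0) = \kappa_2$ (consistent with the observation, made just before the lemma, that $\alpha=0$ solves \eqref{eqn:find-alpha} at $\kappa=\kappa_2$), and $\Phi(\alpha)\to+\infty$ as $\alpha\to1^-$, since $m-1<0$ while the denominator tends to $1$. Next I would show $\Phi$ has at most one interior critical point, necessarily a minimum. Indeed,
\[
\frac{\Phi'(\alpha)}{\Phi(\alpha)} = \frac{1-m}{1-\alpha} - \frac{1-\srho}{\srho + \alpha(1-\srho)},
\]
where $\frac{1-m}{1-\alpha}$ is increasing in $\alpha$ and $\frac{1-\srho}{\srho+\alpha(1-\srho)}$ is decreasing (note $0<\srho<1$ by \eqref{eqn:s-simple}), so $\Phi'/\Phi$ is strictly increasing on $(0,1)$; since $\Phi>0$, the sign of $\Phi'$ changes at most once, and only from $-$ to $+$. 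Hence $\Phi$ is either strictly increasing on $(0,1)$, or it strictly decreases on $(0,\alpha^\ast)$ and strictly increases on $(\alpha^\ast,1)$ for a unique $\alpha^\ast\in(0,1)$; the latter is forced exactly when $\Phi'(0)<0$, because $\Phi(\alpha)\to\infty$ as $\alpha\to1$.

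The crux is therefore the sign of $\Phi'(0)$. Using $\srho = 1/((1-m)\dm-1)$ from \eqref{eqn:s-simple},
\[
\frac{\Phi'(0)}{\Phi(0)} = (1-m) - \frac{1-\srho}{\srho} = \frac{(2-m)\srho - 1}{\srho},
\]
and a short computation gives $(2-m)\srho - 1 = \dfrac{3 - \dm + m(\dm-1)}{(1-m)\dm - 1}$, whose numerator has the sign of $m - \bigl(1 - \tfrac{2}{\dm-1}\bigr)$ (the denominator being positive since $m<1-\tfrac{2}{\dm}$). Hence $\Phi'(0)>0$ in case ii) and $\Phi'(0)<0$ in case iii). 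In case ii) this makes $\Phi$ strictly increasing on $(0,1)$ from $\kappa_2$ to $+\infty$, so for every $\kappa>\kappa_2$ there is a unique preimage $\alpha_\kappa\in(0,1)$, proving the first bullet. In case iii), set $\kappa_3 := \Phi(\alpha^\ast) = \min_{(0,1)}\Phi$; then $0<\kappa_3<\Phi(0)=\kappa_2$, $\Phi$ drops from $\kappa_2$ to $\kappa_3$ on $(0,\alpha^\ast)$ and rises from $\kappa_3$ to $+\infty$ on $(\alpha^\ast,1)$. Counting preimages against this profile yields: no solution for $\kappa<\kappa_3$; the single (double) solution $\alpha^\ast$ at $\kappa=\kappa_3$; exactly two solutions $0<\alphaone<\alpha^\ast<\alphatwo<1$ for $\kappa_3<\kappa<\kappa_2$, with $\alphaone\searrow0$ as $\kappa\nearrow\kappa_2$ by monotonicity of $\Phi$ on $(0,\alpha^\ast)$; and, for $\kappa>\kappa_2$, a unique solution $\alphatwo\in(\alpha^\ast,1)$, since the branch $(0,\alpha^\ast)$ only attains values in $(\kappa_3,\kappa_2)$. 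This is exactly the second bullet.

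I expect the main obstacle to be the sign analysis of $\Phi'(0)$: it is elementary, but it is precisely the step where the threshold $m = 1 - \frac{2}{\dm-1}$ distinguishing cases ii) and iii) emerges, and it hinges on the closed form \eqref{eqn:s-simple} for $\srho$ (itself derived in Appendix~\ref{appendix:s}). The remaining ingredients — the endpoint values of $\Phi$, the monotonicity of $\Phi'/\Phi$, and the preimage count — are routine once $\Phi$ is introduced.
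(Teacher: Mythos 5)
Your proof is correct, and it takes a route that is organized differently from the paper's even though it turns on the same key inequality. The paper fixes $\kappa$ and counts intersections of the line $f_\kappa$ with the convex function $g$ from \eqref{eqn:fg}, deciding the picture by comparing the slopes $f_{\kappa_2}'(0)$ and $g'(0)$ at the known root $\alpha=0$, and then arguing somewhat informally that lowering $\kappa$ in case iii) produces a tangency at some $\kappa_3$ (computed explicitly in \eqref{eqn:alphac}--\eqref{eqn:kappa3}). You instead invert the relation to $\kappa=\Phi(\alpha)$ and count preimages of a level; the decisive computation, the sign of $(2-m)\srho-1$ via \eqref{eqn:s-simple}, is identical to the paper's slope comparison, and your interior critical point $\alpha^\ast$ (where $\tfrac{1-m}{1-\alpha}=\tfrac{1-\srho}{\srho+\alpha(1-\srho)}$) is exactly the paper's $\overline{\alpha}$ from \eqref{eqn:alphac}, with $\kappa_3=\Phi(\alpha^\ast)$ matching \eqref{eqn:kappa3}. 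What your framing buys is rigour in the counting step: the strict monotonicity of $\Phi'/\Phi$, together with $\Phi'/\Phi\to+\infty$ as $\alpha\to1^-$, gives an airtight ``at most one critical point, necessarily a minimum'' statement, which settles both the uniqueness in case ii) and the existence of the fold at $\kappa_3$ in case iii) without appealing to the convexity of $g$ (which the paper uses implicitly when it asserts that $f_{\kappa_2}(0)=g(0)$ and $f_{\kappa_2}'(0)<g'(0)$ force $\alpha=0$ to be the only root) or to a continuity argument for the tangency. What the paper's version buys is the explicit closed forms for $\overline{\alpha}$ and $\kappa_3$, which are used later in the numerics; you could recover them from your $\Phi'=0$ equation with one more line.
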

\begin{proof} We consider the two cases separately. 

{\em $\bullet$ Case $1-\frac{2}{\dm-1} < m<1-\frac{2}{\dm}$.} This is case ii) from Section \ref{subsect:fs}.  We note that both $f_\kappa(0)$ and $f_\kappa'(0)$ are increasing functions of $\kappa$. Therefore, if we show that for $\kappa=\kappa_2$, $\alpha =0$ is the only solution of \eqref{eqn:alpha-fg}, then we can infer the uniqueness of the solution for $\kappa>\kappa_2$ -- see Figure \ref{fig:fgplot}(a) for an illustration of this case. 

We know that $f_{\kappa_2}(0) = g(0)$, and we will show that $f_{\kappa_2}'(0)<g'(0)$. Given that $f_{\kappa_2}$ is linear and $g$ is increasing, this shows that $\alpha =0$ is the only solution of \eqref{eqn:alpha-fg} for $\kappa = \kappa_2$. Indeed, the slope of the tangent line of $g(\alpha)$ at $\alpha=0$ can be calculated as
\[
g'(0) = (1-m)\kappa_2 \srho.
\]
Also, $f_{\kappa_2}'(0) = \kappa_2(1-\srho)$. By a simple calculation we find that
\[
\kappa_2(1-\srho) < (1-m)\kappa_2 \srho,
\]
as this is equivalent to
\[
1 < (2-m)\srho,
\]
which after using \eqref{eqn:s-simple}, is equivalent to
\[
m> 1- \frac{2}{\dm-1}.
\]
The latter inequality is satisfied by the assumption on $m$.

Therefore, for any $\kappa > \kappa_2$ there exists a unique solution $0<\alpha_\kappa<1$ of $f_\kappa(\alpha)=g(\alpha)$ for this case.
\medskip

{\em $\bullet$ Case $0<m<1-\frac{2}{\dm-1}$.} From a similar argument (reverse the inequality sign in the calculation above), in this case we have 
\[
f_{\kappa_2}'(0)>g'(0).
\]
Therefore, in addition to $\alpha=0$, the equation $f_{\kappa_2}(\alpha) = g(\alpha)$ has an additional solution in the interval $(0,1)$ -- see Figure \ref{fig:fgplot}(b) for an illustration. 

Recall that both $f_\kappa(0)$ and $f_\kappa'(0)$ are increasing functions of $\kappa$. Hence, for $\kappa>\kappa_2$, there exists a unique solution in $(0,1)$ of \eqref{eqn:find-alpha}. Also, by decreasing the values of $\kappa$ down from $\kappa_2$, we infer that there exists $\kappa_3<\kappa_2$ such that $f_{\kappa_3}(\alpha)$ and $g(\alpha)$ are tangent to each other at some $\overline{\alpha} \in (0,1)$. To find $\kappa_3$ and $\overline{\alpha}$ one needs to solve
\[
\begin{cases}
f_{\kappa_3}(\overline{\alpha})=g(\overline{\alpha}),\\[2pt]
f_{\kappa_3}'(\overline{\alpha})=g'(\overline{\alpha}).
\end{cases}
\]
Using the expressions of $f_\kappa$ and $g$, the system above becomes 
\[
\begin{cases}
\kappa_3(\srho+\overline{\alpha}(1-\srho))=(1-\overline{\alpha})^{m-1}\kappa_2 \srho,\\[2pt]
\kappa_3(1-\srho)=(1-m)(1-\overline{\alpha})^{m-2}\kappa_2 \srho,
\end{cases}
\]
and solving the system we find
\begin{equation}
\label{eqn:alphac}
\overline{\alpha} = \frac{1-2\srho+m \srho}{(1-\srho)(2-m)}, 
\end{equation}
and
\begin{equation}
\label{eqn:kappa3}
\kappa_3 = \kappa_2 \frac{(1-m)\srho}{1-\srho} (1-\overline{\alpha})^{m-2},
\end{equation}
with $\srho$ given by \eqref{eqn:s-simple}.

For $\kappa_3<\kappa<\kappa_2$, there exist two solutions $0<\alphaone<\alphatwo<1$ of \eqref{eqn:find-alpha}. At $\kappa=\kappa_3$, the two solutions $\alphaone$ and $\alphatwo$ are equal to $\overline{\alpha}$. For $\kappa<\kappa_3$ there exist no solutions.
\end{proof}

\begin{figure}[!htbp]
\begin{center}
 \begin{tabular}{cc}
 \includegraphics[width=0.48\textwidth]{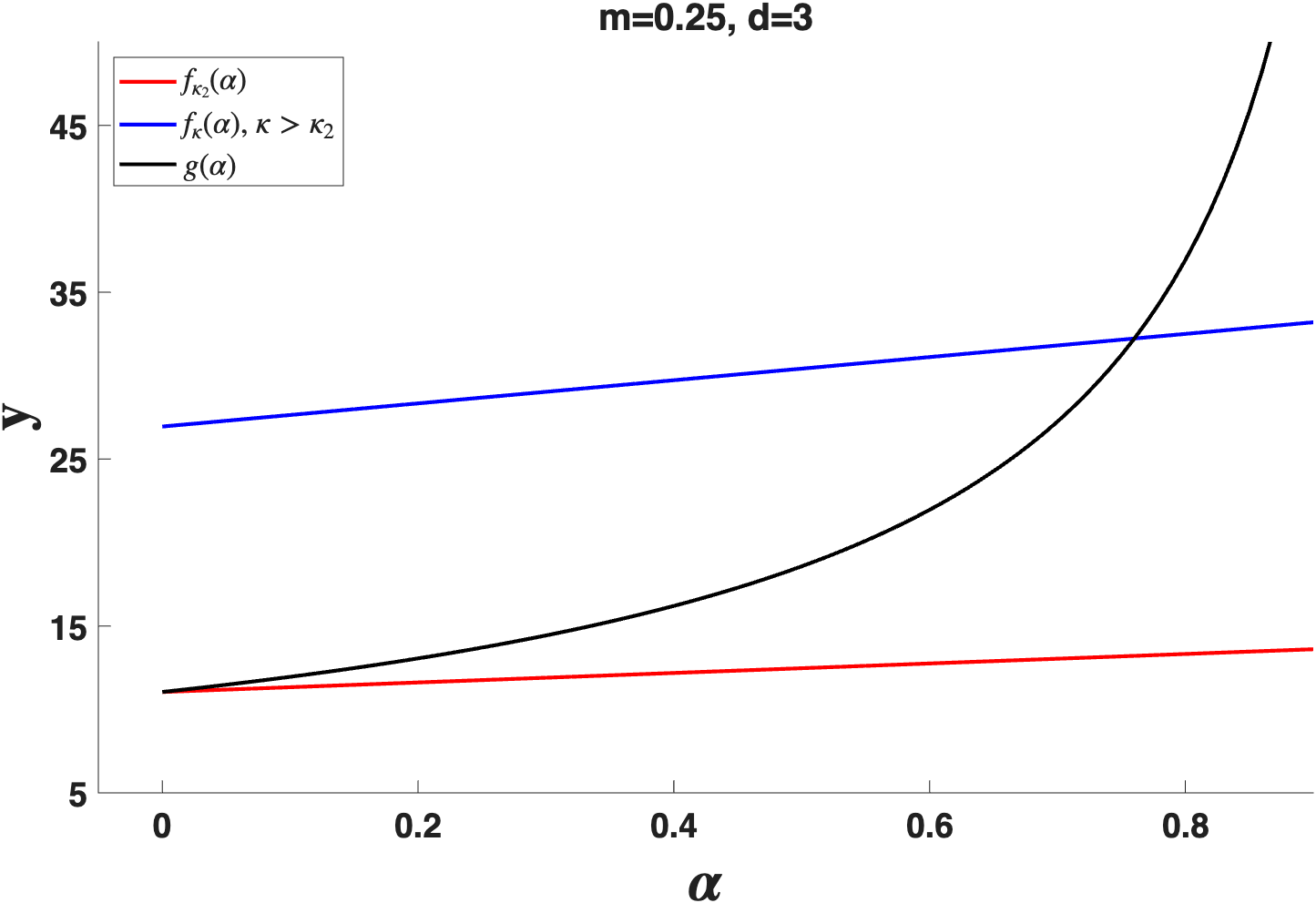} & 
 \includegraphics[width=0.48\textwidth]{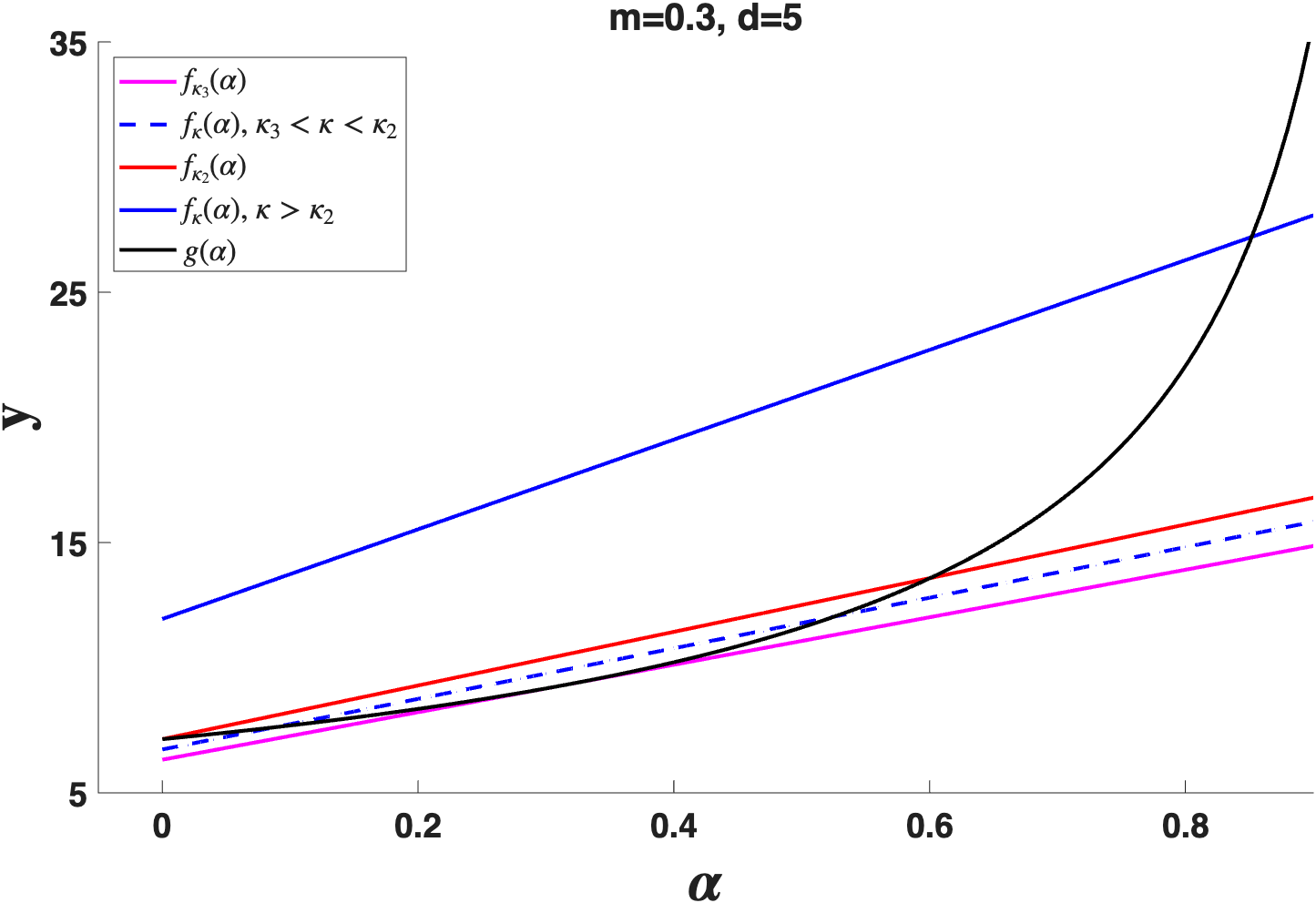} \\
 (a) & (b)
\end{tabular}
\caption{Plots of $f_\kappa$ and $g$ from \eqref{eqn:fg}. (a) Case $1-\frac{2}{\dm-1}<m<1-\frac{2}{\dm}$: for any $\kappa > \kappa_2$, equation \eqref{eqn:find-alpha} has a unique solution $\alpha_\kappa \in (0,1)$. (b) Case $0<m<1-\frac{2}{\dm-1}$: for $\kappa_3 <\kappa <\kappa_2$, there exist two solutions $0<\alphaone<\alphatwo<1$ of \eqref{eqn:find-alpha}. At $\kappa=\kappa_3$, the two solutions $\alphaone$ and $\alphatwo$ coincide. For $\kappa>\kappa_2$, \eqref{eqn:find-alpha} has a unique solution in the interval $(0,1)$. The numerical simulations correspond to (a) $m=0.25$ and $\dm=3$, and (b) $m=0.3$ and $\dm=5$.}
\label{fig:fgplot}
\end{center}
\end{figure}

In each of the cases of Lemma \ref{lem:alpha}, for every solution $\alpha_\kappa$ of \eqref{eqn:find-alpha} it corresponds an equilibrium $\mu_{\alpha_\kappa}$ in the form \eqref{eqn:mu-alpha-mod}, where the density $\brho$ is given by \eqref{eqn:equil-brho}.

The findings in this section can be put together in the following Proposition. 

\begin{proposition}[Phase transitions and equilibria]
\label{prop:bif}
Given a dimension $\dm \geq 1$, we distinguish three ranges of $0<m<1$: i) $1- \frac{2}{\dm} < m <1$, ii) $1- \frac{2}{\dm-1}<m< 1- \frac{2}{\dm}$, and iii) $0<m<1- \frac{2}{\dm-1}$. Note that for $\dm=1$ and $\dm=2$, cases ii) and iii) do not apply, while for $\dm=3$ case iii) is not applicable. Then,

\noindent a) In all three cases, at $\kappa=\kappa_1$ (see \eqref{eqn:kappa1}), an equilibrium fully supported on $\bbs^\dm$ bifurcates from the uniform distribution $\rhou$. This equilibrium, in the form \eqref{eqn:equil-fs}, is given by
\begin{equation}
\label{eqn:rhok}
\rho_\kappa(x) = \left(\frac{m}{1-m}\right)^{\frac{1}{1-m}} \left(-\lambda_\kappa- \kappa s_\kappa\cos\theta_x\right)^{\frac{1}{m-1}}, \qquad\forall x\in \bbs^\dm,
\end{equation}
where $s_\kappa$ and $\lambda_\kappa$ are uniquely determined by $\kappa$ (see \eqref{eqn:slambda} and \eqref{eqn:skappa-eta}), as given by:
\begin{equation}
\label{eqn:skappa}
s_\kappa = \frac{\int_0^\pi (\eta_\kappa-\cos\theta)^{\frac{1}{m-1}}\sin^{\dm-1}\theta \cos\theta\d\theta}{\int_0^\pi (\eta_\kappa-\cos\theta)^{\frac{1}{m-1}}\sin^{\dm-1}\theta\d\theta},
\end{equation}
$\lambda_\kappa = - \kappa s_\kappa \eta_\kappa$, with $\eta_\kappa$ being the unique solution of $H(\eta_\kappa)=\kappa^{-1}$.

In case i), the equilibria $\rho_\kappa$ exist for all $\kappa \in (\kappa_1,\infty)$. In case ii), they exist only for $\kappa \in (\kappa_1,\kappa_2)$, where $\kappa_2>\kappa_1$, and in case iii) they exist for $\kappa \in (\kappa_2,\kappa_1)$. For both cases ii) and iii), $\kappa_2$ is given by \eqref{eqn:kappa2}. \\

\noindent b) In cases ii) and iii), at $\kappa=\kappa_2$ the equilibria \eqref{eqn:rhok} transition from density-valued to measure-valued. 
\begin{itemize}
\item case ii) $1-\frac{2}{\dm-1}<m<1-\frac{2}{\dm}$. By Lemma \ref{lem:alpha}, for any $\kappa > \kappa_2$, equation \eqref{eqn:find-alpha} has a unique solution $0< \alpha_\kappa <1 $. Hence, for any $\kappa > \kappa_2$, there exists a unique equilibrium in the form \eqref{eqn:mu-alpha-mod}, given by 
\begin{equation}
\label{eqn:mu-alphak}
\mu_{\alpha_\kappa} = \alpha_\kappa \delta_{x_0} + (1-\alpha_\kappa) \brho \, \d S,
\end{equation}
where the density $\brho$ (which does not depend on $\kappa$) is given by \eqref{eqn:equil-brho}.
Here, $x_0 \in \bbs^\dm$ is a fixed (arbitrary) unit vector; see also \eqref{eqn:x_0}.
\item case iii) $0<m<1-\frac{2}{\dm-1}$. In this case, there exists a third critical value $\kappa_3<\kappa_2$ (see \eqref{eqn:kappa3} and Lemma \ref{lem:alpha}) such that: 
\begin{itemize}
    \item at $\kappa=\kappa_3$ a pair of equilibria in the form \eqref{eqn:mu-alpha} gets born. For any $\kappa_3 <\kappa <\kappa_2$, there exist two solutions $0<\alphaone<\alphatwo<1$ of \eqref{eqn:find-alpha}. To each of the two values $\alphaone$ and $\alphatwo$, it corresponds a measure-valued equilibrium in the form \eqref{eqn:mu-alphak}, with $\brho$ given by \eqref{eqn:equil-brho}. At $\kappa=\kappa_3$, the two solutions $\alphaone$ and $\alphatwo$ (and hence the corresponding equilibria) coincide. 
    
    \item as $\kappa \nearrow \kappa_2$ we have $\alphaone \searrow 0$, and its corresponding equilibrium $\mu_{\alphaone}$ changes at $\kappa=\kappa_2$ from measure-valued to an equilibrium density in the form \eqref{eqn:rhok}. On the other hand, $\mu_{\alphatwo}$ undergoes no transition at $\kappa=\kappa_2$ and retains the form \eqref{eqn:mu-alphak}. As $\kappa$ increases to infinity, $\mu_{\alphatwo}$ approaches $\delta_{x_0}$.
\end{itemize}
\end{itemize}
\end{proposition}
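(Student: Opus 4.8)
The plan is to deduce Proposition~\ref{prop:bif} from the analysis already carried out in Sections~\ref{subsect:fs} and~\ref{subsect:mu}, assembling Lemmas~\ref{lem:H-monotone}, \ref{lem:Hlim} and~\ref{lem:alpha} together with a few elementary limits. For part a), the central object is the function $H$ of \eqref{eqn:H}: on $\eta\in(1,\infty)$ it is continuous and, by Lemma~\ref{lem:H-monotone}, strictly monotone, hence a bijection onto the open interval bounded by its two endpoint limits. By Lemma~\ref{lem:Hlim}, $\lim_{\eta\to\infty}H(\eta)=\kappa_1^{-1}$, while the case-by-case computation preceding the proposition gives $\lim_{\eta\searrow1}H(\eta)=0$ in case i) and $\lim_{\eta\searrow1}H(\eta)=H(1)=\kappa_2^{-1}$ in cases ii) and iii), with $\kappa_2>\kappa_1$ when $H$ is increasing (case ii), where $1-\frac{2}{\dm-1}<m<1-\frac{2}{\dm}\subset(1-\frac{2}{\dm-1},1)$) and $\kappa_2<\kappa_1$ when $H$ is decreasing (case iii)). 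Thus $H$ maps $(1,\infty)$ bijectively onto $(0,\kappa_1^{-1})$ in case i), onto $(\kappa_2^{-1},\kappa_1^{-1})$ in case ii), and onto $(\kappa_1^{-1},\kappa_2^{-1})$ in case iii); inverting gives, for each $\kappa$ in the asserted range, a unique $\eta_\kappa>1$ with $H(\eta_\kappa)=\kappa^{-1}$, and hence $s_\kappa$ via \eqref{eqn:skappa}, $\lambda_\kappa=-\kappa s_\kappa\eta_\kappa$, and the fully supported equilibrium density $\rho_\kappa$ of the form \eqref{eqn:rhok}. The restriction to $\dm\geq3$ in cases ii) and iii) is exactly the integrability condition of Remark~\ref{rmk:rho-integr}.

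For the bifurcation claim in part a), monotonicity of $H$ together with $\lim_{\eta\to\infty}H=\kappa_1^{-1}$ forces $\eta_\kappa\to\infty$ as $\kappa\to\kappa_1$. As already recorded in Remark~\ref{rmk:kappac-ml1}, this entails $s_\kappa\to0$: dividing numerator and denominator in \eqref{eqn:skappa} by $\eta_\kappa^{1/(m-1)}$, the weight $(\eta_\kappa-\cos\theta)^{1/(m-1)}\eta_\kappa^{-1/(m-1)}\to1$ uniformly in $\theta$, so $s_\kappa\to\int_0^\pi\sin^{\dm-1}\theta\cos\theta\,\d\theta\big/\int_0^\pi\sin^{\dm-1}\theta\,\d\theta=0$. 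It also entails $\lambda_\kappa\to\lambda_{\text{uni}}$ of \eqref{eqn:lambda-uni}; substituting into \eqref{eqn:rhok} and using $\bigl(\tfrac{m}{1-m}\bigr)^{\frac{1}{1-m}}(-\lambda_{\text{uni}})^{\frac{1}{m-1}}=|\bbs^\dm|^{-1}$ yields $\rho_\kappa\to\rhou$ pointwise, which is the claimed bifurcation from the uniform distribution.

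For part b) I would first invoke the identity established at the start of Section~\ref{subsect:mu}: at $\kappa=\kappa_2$ one has $\eta_{\kappa_2}=1$, and inserting \eqref{eqn:lambda-k2} into \eqref{eqn:rho-k2} shows $\rho_{\kappa_2}$ coincides with $\brho$ of \eqref{eqn:equil-brho}, which is precisely the regular component of the measure-valued form \eqref{eqn:mu-alpha} at $\alpha=0$; this identifies $\kappa_2$ as the value at which the density branch \eqref{eqn:rhok} meets the measure-valued branch \eqref{eqn:mu-alpha-mod}, so equilibria transition there from density-valued to measure-valued. The number and position of the measure-valued branches then follow directly from Lemma~\ref{lem:alpha}: in case ii), a unique $\alpha_\kappa\in(0,1)$ solving \eqref{eqn:find-alpha} for every $\kappa>\kappa_2$, giving the equilibrium \eqref{eqn:mu-alphak}; in case iii), the saddle-node value $\kappa_3<\kappa_2$ of \eqref{eqn:kappa3} with tangency point $\overline{\alpha}$ of \eqref{eqn:alphac}, two solutions $0<\alphaone<\alphatwo<1$ for $\kappa_3<\kappa<\kappa_2$ coalescing at $\overline{\alpha}$ as $\kappa\searrow\kappa_3$, and $\alphaone\searrow0$ as $\kappa\nearrow\kappa_2$ (so $\mu_{\alphaone}$ limits onto $\rho_{\kappa_2}=\brho$ and continues for $\kappa>\kappa_2$ as the density branch \eqref{eqn:rhok}), while $\mu_{\alphatwo}$ persists for all $\kappa>\kappa_2$. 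The one genuinely new computation is the concentration $\mu_{\alphatwo}\rightharpoonup\delta_{x_0}$ as $\kappa\to\infty$: by \eqref{eqn:alpha-fg}, $(1-\alphatwo)^{m-1}=\kappa\bigl(\srho+\alphatwo(1-\srho)\bigr)\big/(\kappa_2\srho)\geq\kappa/\kappa_2\to\infty$, so $(1-\alphatwo)^{m-1}\to\infty$ and, since $m-1<0$, $\alphatwo\to1$, whence $\mu_{\alphatwo}\rightharpoonup\delta_{x_0}$.

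The genuinely hard steps are already behind us: Lemma~\ref{lem:H-monotone} (the sign of $H'$, via associated Legendre functions of the first kind and a Chebyshev-type inequality) and Lemma~\ref{lem:alpha} (the solution count for \eqref{eqn:find-alpha}, via comparing the slopes of $f_\kappa$ and $g$ at $\alpha=0$). Granting these, the remaining work for Proposition~\ref{prop:bif} is essentially bookkeeping — matching the endpoint values of $H$ to $\kappa_1$ and $\kappa_2$, tracking the $\kappa$-interval on which each equilibrium family lives, and checking the three short limits ($\rho_\kappa\to\rhou$ at $\kappa=\kappa_1$, the coincidence of the density and measure families at $\kappa=\kappa_2$, and $\mu_{\alphatwo}\rightharpoonup\delta_{x_0}$ as $\kappa\to\infty$) — so that the continuity of the full bifurcation diagram across $\kappa_1$ and $\kappa_2$ is made explicit. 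The only point requiring mild care is ensuring the two families indeed glue (rather than merely touch) at $\kappa_2$, which is what the identification $\rho_{\kappa_2}=\brho$ above provides.
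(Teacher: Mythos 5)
Your proposal is correct and follows essentially the same route as the paper, which derives Proposition~\ref{prop:bif} by assembling the endpoint limits of $H$ (Lemmas~\ref{lem:H-monotone} and~\ref{lem:Hlim} plus the $\eta\searrow 1$ case analysis), the identification $\rho_{\kappa_2}=\brho$ at $\eta_{\kappa_2}=1$, and the solution count of Lemma~\ref{lem:alpha}. Your closing estimate $(1-\alphatwo)^{m-1}\geq\kappa/\kappa_2\to\infty$ is a small but welcome addition, since the paper only supports $\alpha_\kappa\nearrow 1$ by reference to the numerics.
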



\subsection{Numerical illustrations} 
\label{subsect:numerics-ml1}

We illustrate Proposition \ref{prop:bif} with numerical results. For case i) we used $m=0.5$ and $\dm=2$. Figure \ref{fig:m05-se}(a) shows the bifurcation at $\kappa_1$ -- for this choice of $m$ and $\dm$ we have $\kappa_1 \approx 5.3174$. Blue colour corresponds to the uniform distribution (with centre of mass at the origin), which is unstable for $\kappa>\kappa_1$ (Proposition \ref{prop:stab-unif}). In red we plot $s_\kappa = \|c_{\rho_\kappa}\|$ for the fully supported equilibria \eqref{eqn:rhok}, which for case i) exist for all $\kappa>\kappa_1$. In Figure \ref{fig:m05-rhok}(a) we plot the equilibria $\rho_\kappa$ at several values of $\kappa$ (at $\kappa=\kappa_1$, $\rho_\kappa$ is the uniform distribution). As $\kappa$ increases, $\rho_k$ concentrates around $\theta=0$, behaviour which can also be inferred from Figure \ref{fig:m05-se}(a) since $\|c_{\rho_\kappa}\|$ approaches $1$ as $\kappa \to \infty$. 

\begin{figure}[htbp]
 \begin{center}
 \begin{tabular}{cc}
 \includegraphics[width=0.48\textwidth]{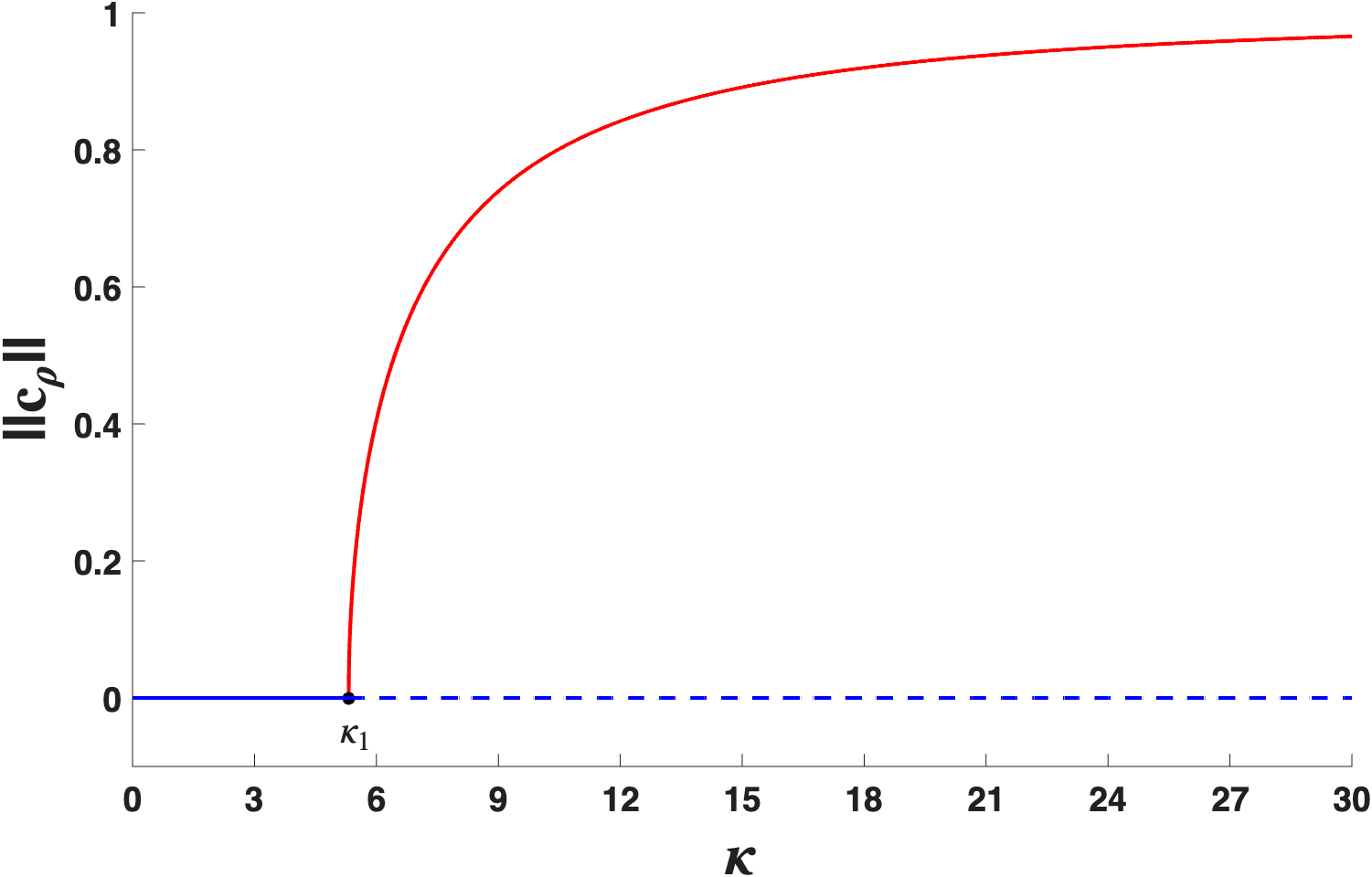} & 
 \includegraphics[width=0.48\textwidth]{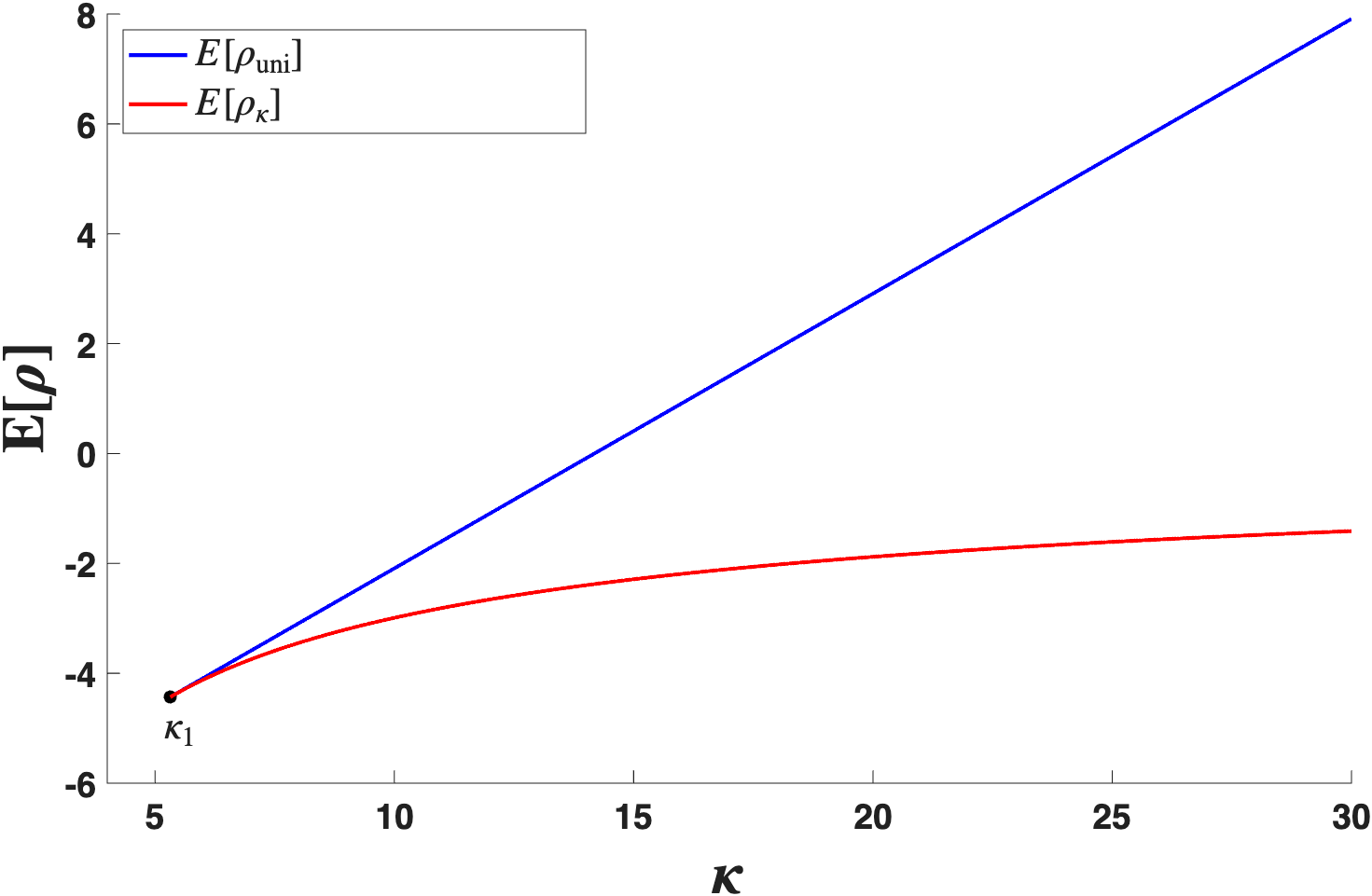} \\
 (a) & (b)
\end{tabular}
\caption{Case i) $1-2/\dm<m<1$. (a) Plot of the norm of the centre of mass of equilibria. b) Plot of the energies of equilibria for $\kappa>\kappa_1$. In both plots, blue corresponds to $\rhou$ and red to $\rho_\kappa$ -- see Proposition \ref{prop:bif} part a). At $\kappa=\kappa_1$, a fully supported equilibrium $\rho_\kappa$ in the form \eqref{eqn:rhok} emerges from the uniform distribution, and then it exists for all $\kappa>\kappa_1$. The equilibrium $\rho_\kappa$ is the global energy minimizer when $\kappa>\kappa_1$ -- see Theorem \ref{thm:global-min} part a).  The numerical simulations correspond to $m=0.5$ and $\dm=2$.}
\label{fig:m05-se}
\end{center}
\end{figure}

\begin{figure}[htbp]
 \begin{center}
 \begin{tabular}{cc}
 \includegraphics[width=0.48\textwidth]{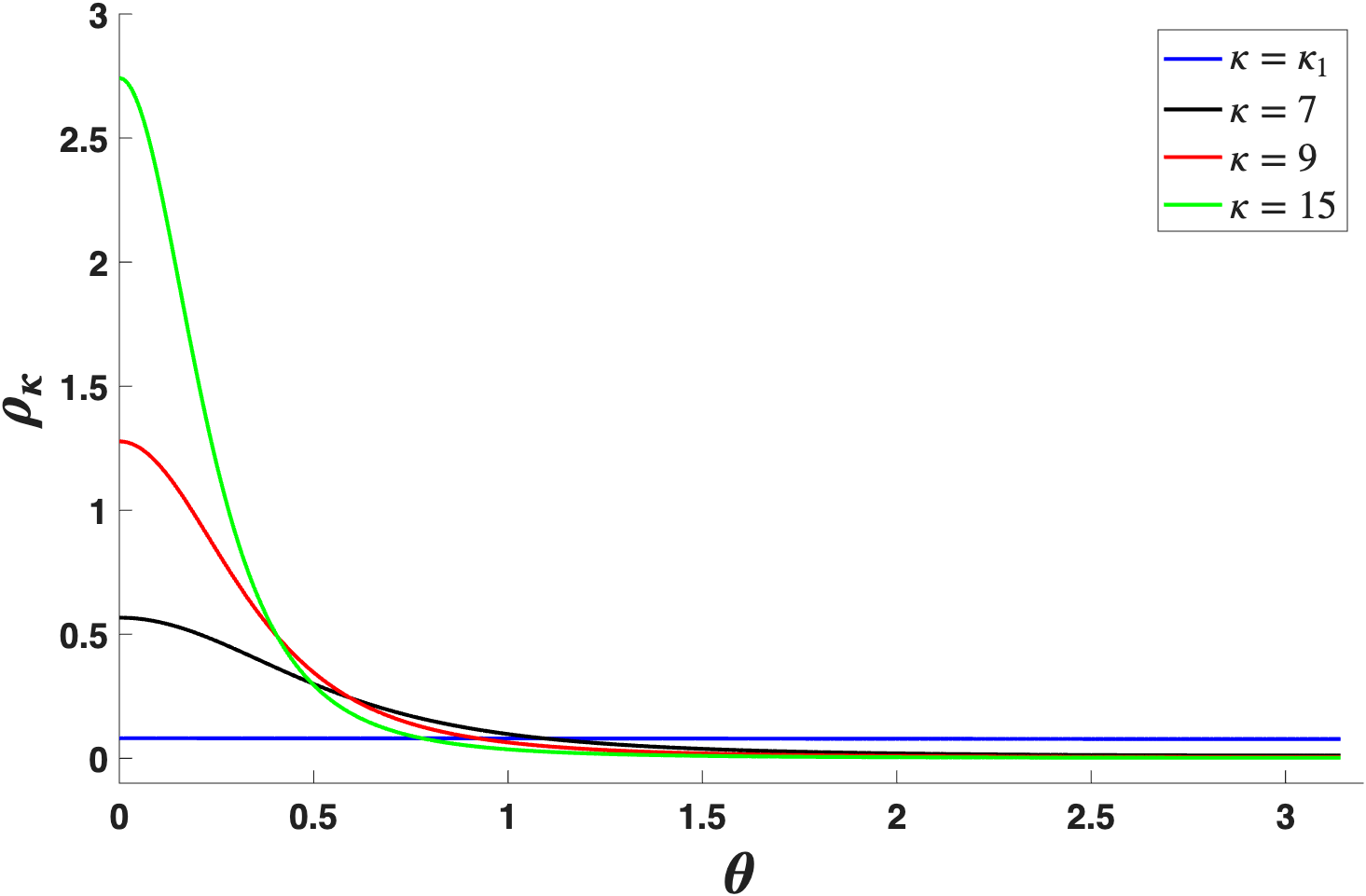} &
 \includegraphics[width=0.48\textwidth]{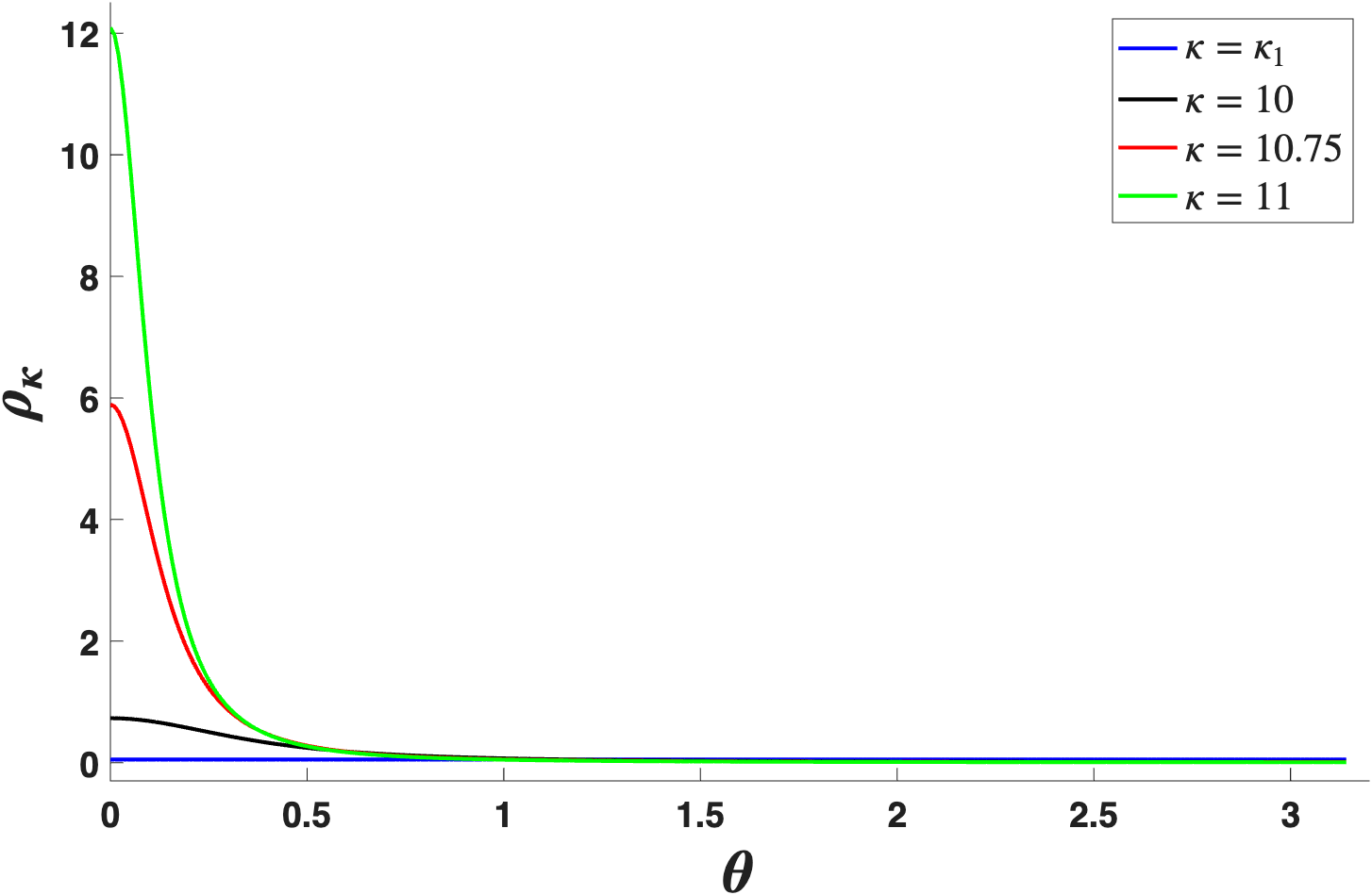} \\
  (a) & (b)
 \end{tabular}
\caption{Plot of equilibrium densities $\rho_\kappa$ from \eqref{eqn:rhok}. (a) $m=0.5$, $\dm=2$. These values correspond to case i) from Proposition \ref{prop:bif}. The equilibrium densities $\rho_\kappa$ exist for all $\kappa>\kappa_1$. (b) $m=0.25$, $\dm=3$. These values correspond to case ii) from Proposition \ref{prop:bif}. The equilibrium densities $\rho_\kappa$ exist for $\kappa_1<\kappa<\kappa_2$ in this case (here $\kappa_2 \approx 12.4453$).  For both plots, $\kappa=\kappa_1$ corresponds to the uniform distribution, and $\rho_\kappa$ concentrates around $\theta =0$ as $\kappa$ increases.}
\label{fig:m05-rhok}
\end{center}
\end{figure}

For case ii) we used $m=0.25$ and $\dm=3$. Figure \ref{fig:m025-se}(a) illustrates the bifurcation at $\kappa_1 \approx 9.3648$ and the transition to measure-valued equilibria at $\kappa_2 \approx 12.4453$. Blue corresponds to the uniform distribution, while the red line represents $\|c_{\rho_\kappa}\|$ ($\kappa_1<\kappa<\kappa_2$) -- see \eqref{eqn:rhok} or $\|c_{\mu_{\alpha_\kappa}}\|$ ($\kappa>\kappa_2$) -- see \eqref{eqn:mu-alphak} and \eqref{eqn:equil-brho}. Figure \ref{fig:m05-rhok}(b) shows the equilibria $\rho_\kappa$ at several values of $ \kappa <\kappa_2$ (so before the equilibria become measure-valued), where $\kappa=\kappa_1$ corresponds to the uniform distribution. For $\kappa \geq \kappa_2$, the regular part $\brho$ of the equilibrium $\mu_{\alpha_\kappa}$ (see \eqref{eqn:equil-brho}) is infinite at $\theta =0$, so we could not provide good illustrations for this range. As $\kappa$ increases, $\alpha_\kappa \nearrow 1$ (see Figure \ref{fig:fgplot}(a)) and the equilibria $\mu_{\alpha_\kappa}$ approach a delta Dirac.

\begin{figure}[htbp]
 \begin{center}
 \begin{tabular}{cc}
 \includegraphics[width=0.48\textwidth]{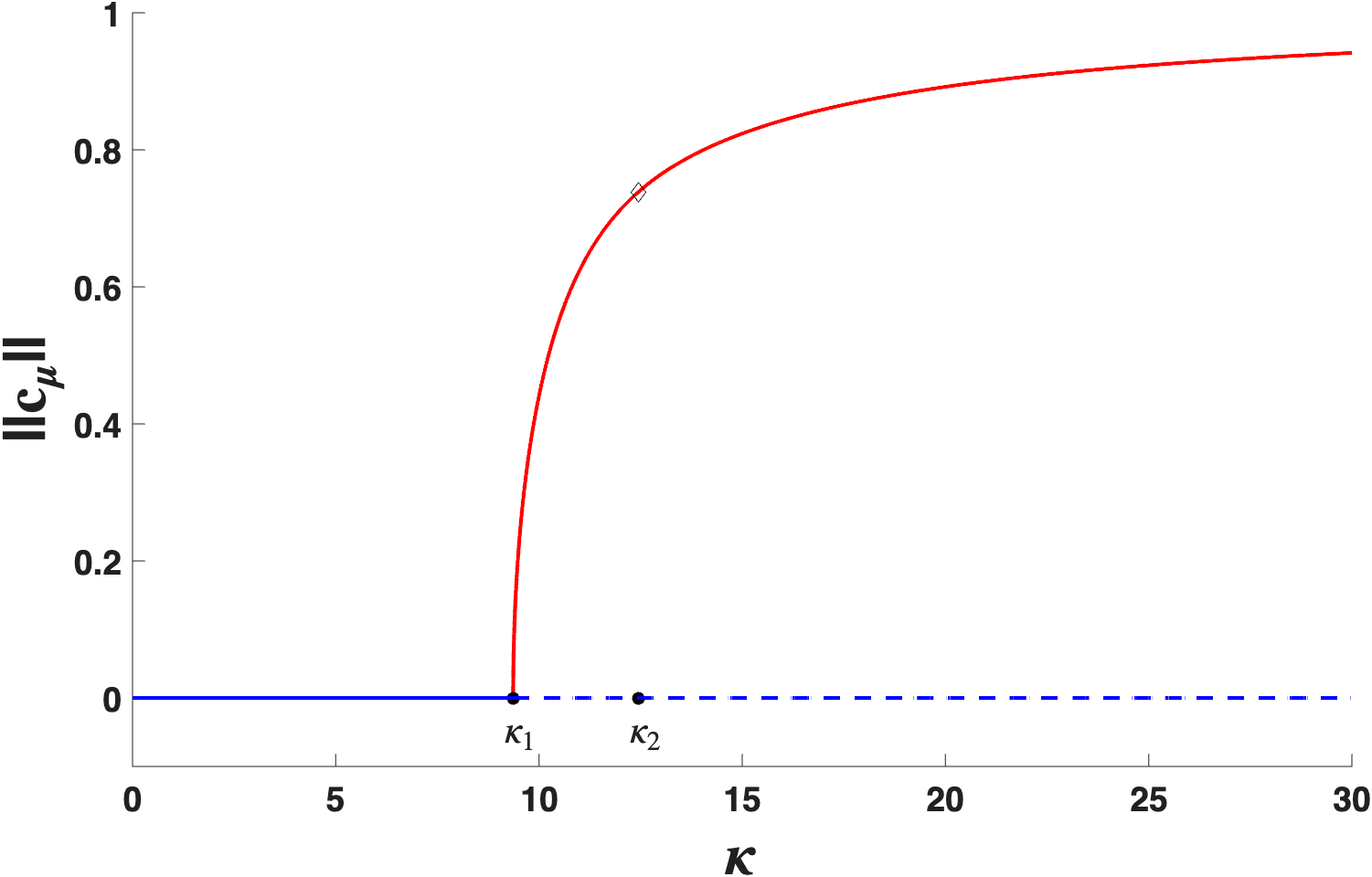} & 
 \includegraphics[width=0.48\textwidth]{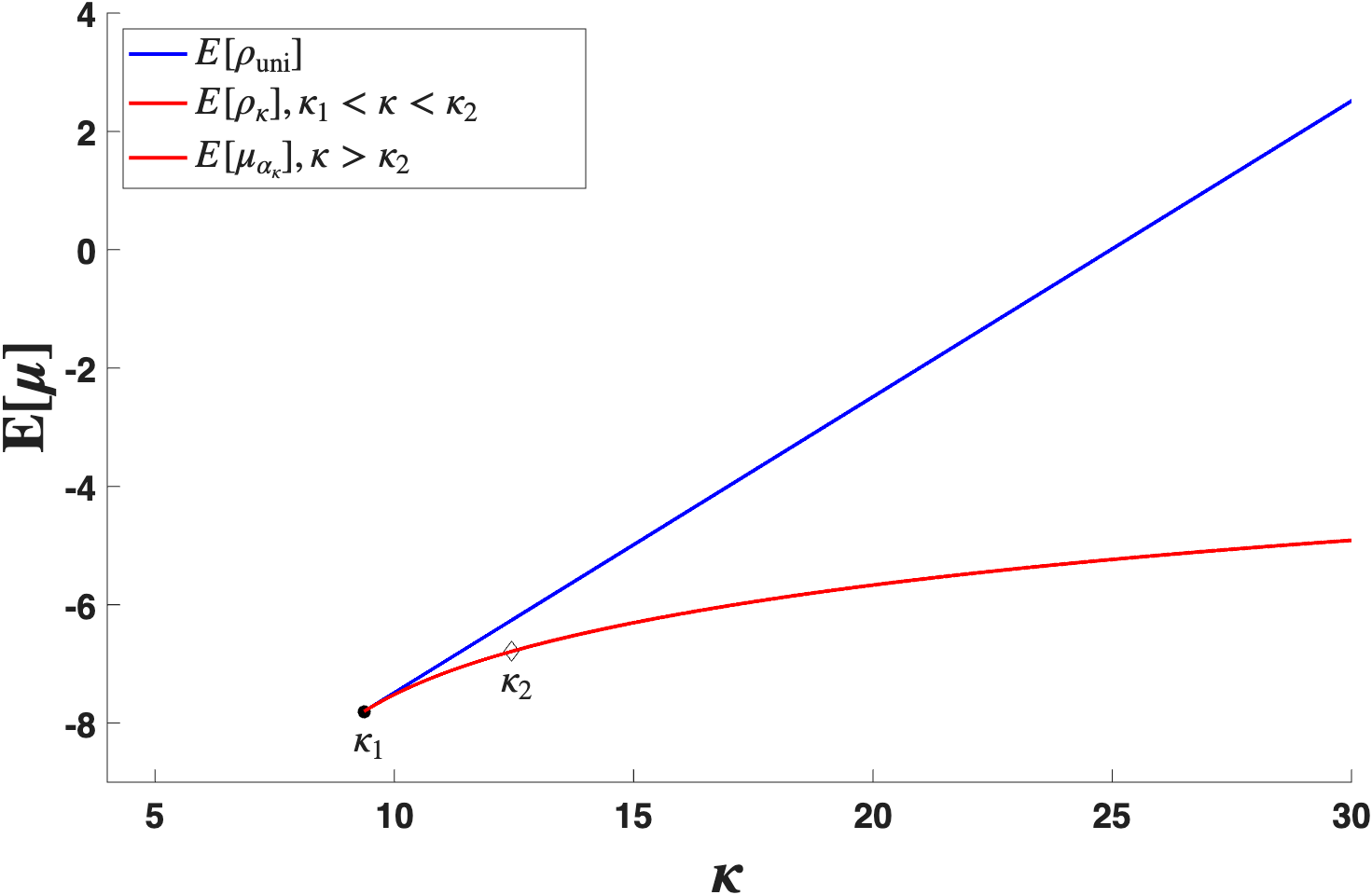} \\
 (a) & (b)
\end{tabular}
\caption{Case ii) $1-2/(\dm-1)<m<1-2/\dm$. (a) Plot of the norm of the centre of mass of equilibria. (b) Plot of the energies of equilibria for $\kappa>\kappa_1$. In both plots, blue corresponds to $\rhou$ and red to either $\rho_\kappa$ (for $\kappa_1<\kappa<\kappa_2$) or $\mu_{\alpha_\kappa}$ (for $\kappa>\kappa_2$) -- see Proposition \ref{prop:bif}. At $\kappa=\kappa_1$, a fully supported equilibrium $\rho_\kappa$ in the form \eqref{eqn:rhok} emerges from the uniform distribution. At $\kappa=\kappa_2$, $\rho_\kappa$ changes to a measure-valued equilibrium $\mu_{\alpha_\kappa}$ -- see \eqref{eqn:mu-alphak} and \eqref{eqn:equil-brho}; this transition is indicated by a black diamond. The global minimizer when $\kappa_1<\kappa<\kappa_2$ is $\rho_\kappa$, and when $\kappa>\kappa_2$, the ground state is $\mu_{\alpha_\kappa}$ -- see Theorem \ref{thm:global-min} part a). The numerical simulations correspond to $m=0.25$, $\dm=3$.}
\label{fig:m025-se}
\end{center}
\end{figure}

Finally, for case iii) we used $m=0.3$ and $\dm=5$. Figure \ref{fig:m03-se}(a) shows the bifurcation at $\kappa_1 \approx 19.9199$, the transition to measure-valued equilibria at $\kappa_2 \approx 17.8623$, and the bifurcation at $\kappa_3 \approx 15.8088$, where two measure valued-equilibria emerge. Blue corresponds to the uniform distribution, and the solid red line represents $\|c_{\mu_{\alpha_\kappa}}\|$ (which exists for all $\kappa>\kappa_3)$. Also, the dashed red line represents either $\|c_{\mu_{\tilde{\alpha}_\kappa}}\|$ for $\kappa_3<\kappa<\kappa_2$ (see \eqref{eqn:mu-alphak} and \eqref{eqn:equil-brho}), or $\|c_{\rho_{_\kappa}}\|$ for $\kappa_2<\kappa<\kappa_1$ (see \eqref{eqn:rhok}). At $\kappa = \kappa_2$ the equilibrium $\mu_{\alphaone}$ changes from measure-valued to an equilibrium density $\rho_\kappa$ in the form \eqref{eqn:rhok}; this transition is indicated by a black diamond. At $\kappa=\kappa_1$, $\rho_\kappa$ merges with the uniform distribution and vanishes. We do not provide separate illustrations of $\rho_\kappa$ here, as the qualitative behaviour is similar to Figure \ref{fig:m05-rhok}(b), except that $\rho_\kappa$ now concentrates as $\kappa$ decreases from $\kappa_1$ to $\kappa_2$. Finally, as $\kappa$ increases to infinity, the equilibrium $\mu_{\alpha_\kappa}$ on the upper branch approaches a delta Dirac (as $\alpha_\kappa \nearrow 1$). 

\begin{figure}[htbp]
 \begin{center}
 \begin{tabular}{cc}
 \includegraphics[width=0.48\textwidth]{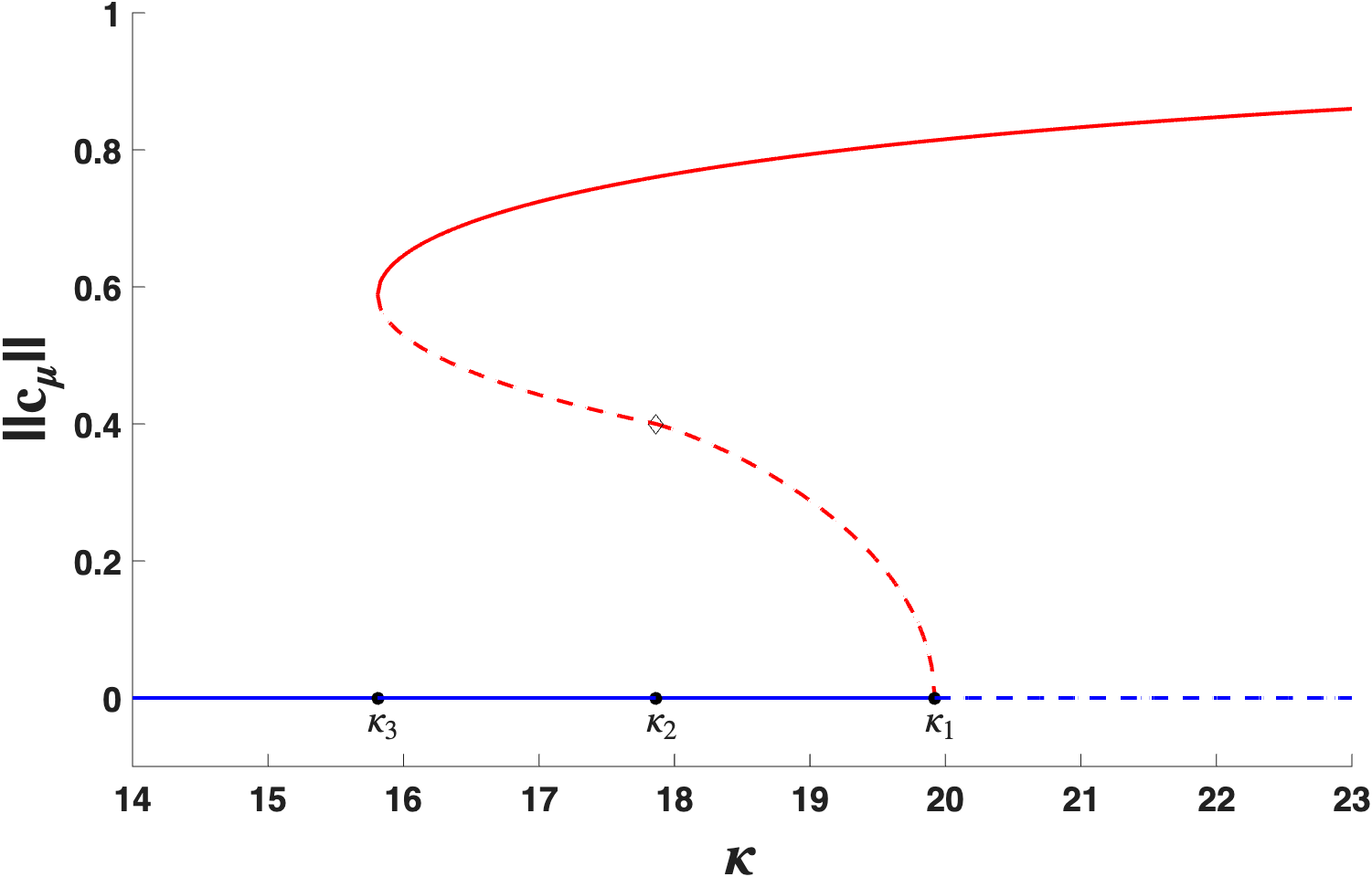} & 
 \includegraphics[width=0.48\textwidth]{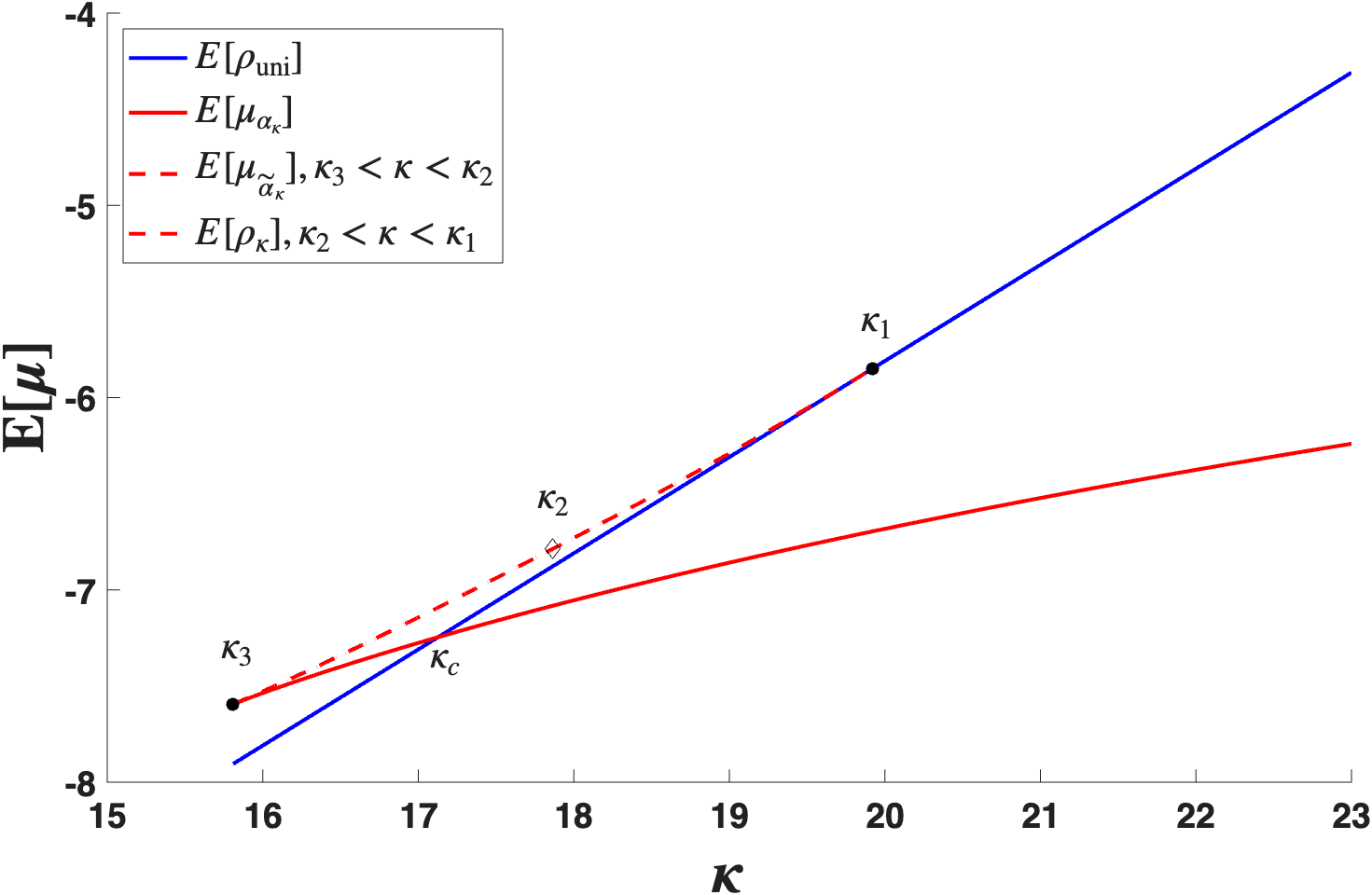} \\
 (a) & (b)
\end{tabular}
\caption{Case iii) $0<m<1-2/(\dm-1)$. (a) Plot of the norm of the centre of mass of equilibria. (b) Plot of the energies of equilibria. In both plots, blue corresponds to $\rhou$ and the solid red line relates to the measure-valued equilibrium  $\mu_{\alpha_\kappa}$ (which exists for all $\kappa>\kappa_3$). Also, the dashed red line corresponds to either $\mu_{\tilde{\alpha}_\kappa}$ (for $\kappa_3<\kappa<\kappa_2$) or $\rho_\kappa$ (for $\kappa_2<\kappa<\kappa_1$) -- see Proposition \ref{prop:bif}. At $\kappa=\kappa_3$, a pair of measure-valued equilibria in the form \eqref{eqn:mu-alphak} emerges. At $\kappa=\kappa_2$, the measure-valued equilibrium $\mu_{\tilde{\alpha}_\kappa}$ on the lower branch changes to a fully supported density-valued equilibrium $\rho_\kappa$ in the form \eqref{eqn:rhok}; this transition is indicated by a black diamond. At $\kappa=\kappa_1$, the equilibrium $\rho_\kappa$ merges with the uniform distribution and vanishes. As $\kappa \to \infty$, the equilibria $\mu_{\alpha_\kappa}$ on the upper branch approaches a delta Dirac. A fourth critical value $\kappa_c \in (\kappa_1,\kappa_3)$ exists in this case. The global energy minimizer is $\rhou$ for $\kappa<\kappa_c$ and $\mu_{\alphatwo}$ for $\kappa>\kappa_c$ -- see Theorem \ref{thm:global-min} part b). The numerical simulations correspond to $m=0.3$, $\dm=5$.}
\label{fig:m03-se}
\end{center}
\end{figure}


\section{Energy minimizers}
\label{sect:minimizers}

In this section we investigate which of the equilibria identified in Proposition \ref{prop:bif} are global energy minimizers. The result is given by the following theorem. 

\begin{theorem} [Global energy minimizers]
\label{thm:global-min}
Consider the uniform distribution together with the equilibria established in Proposition \ref{prop:bif}, for a fixed 
$\dm \geq 1$. Note that we examined three ranges of $0<m<1$ there, where for $\dm=1,2$ and $3$ only some of the ranges are applicable. Then,

\noindent a) In cases i) and ii), the uniform distribution $\rhou$ is the global energy minimizer on $\calP(\bbs^\dm)$ for all $\kappa<\kappa_1$. For $\kappa>\kappa_1$, we have
\begin{itemize}
    \item case i) $1- \frac{2}{\dm} < m <1$. The global minimizer for any $\kappa>\kappa_1$ is given by the equilibrium $\rho_\kappa$ in \eqref{eqn:rhok},
    \item case ii) $1- \frac{2}{\dm-1}<m< 1- \frac{2}{\dm}$. The global minimizer for $\kappa_1<\kappa<\kappa_2$ is given by $\rho_\kappa$ in \eqref{eqn:rhok}, while for $\kappa>\kappa_2$, the ground state is the measure-valued equilibrium $\mu_{\alpha_\kappa}$ from \eqref{eqn:mu-alphak}.
\end{itemize}
Moreover, in both cases the energy differences $E[\rhou]-E[\rho_\kappa]$ and $E[\rhou]-E[\mu_{\alpha_\kappa}]$ increase with $\kappa$ for $\kappa>\kappa_1$.
\smallskip

\noindent b) In case iii) $1- \frac{2}{\dm-1}<m< 1- \frac{2}{\dm}$, there exists $\kappa_3< \kappa_c < \kappa_1$ such that the global energy minimizer on $\calP(\bbs^\dm)$ is 
\begin{itemize}
    \item the uniform distribution $\rhou$ when $\kappa<\kappa_c$,
    \item the measure-valued equilibrium $\mu_{\alphatwo}$ in the form \eqref{eqn:mu-alphak}, as identified in Proposition \ref{prop:bif}, when $\kappa>\kappa_c$.
\end{itemize}
Moreover, the energy difference $E[\rhou]-E[\mu_{\alphatwo}]$ increases with $\kappa$ for $\kappa>\kappa_3$.
\end{theorem}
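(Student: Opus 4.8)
The plan is to reduce the problem to comparing the energies of the finitely many equilibria catalogued in Proposition~\ref{prop:bif}. A global minimizer exists, since $\calP(\bbs^\dm)$ is weak-$*$ compact, the interaction term is weak-$*$ continuous, and the entropy $\frac{1}{m-1}\int_{\bbs^\dm}\rho^m\,\dS$ is a weak-$*$ lower semicontinuous, bounded-below functional of $\mu$ for $0<m<1$; by Lemmas~\ref{Lemma:Dirac-delta-concentration}--\ref{lem:symmetry-steady-states} it lies in $\Psym(\bbs^\dm)$ and, being a critical point, it is one of $\rhou$, $\rho_\kappa$, or a measure-valued $\mu_{\alpha_\kappa}$, on the ranges of $\kappa$ where these exist. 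The key tool I would establish first is an envelope identity: for a $C^1$ branch of equilibria $\kappa\mapsto\mu_\kappa$ in $\Psym$, variations of $\rho$ contribute nothing to $\frac{\dd}{\dd\kappa}E[\mu_\kappa]$ (since $\delta E/\delta\rho$ equals the constant Lagrange multiplier on the full support while $\int_{\bbs^\dm}\partial_\kappa\rho_\kappa\,\dS=0$) and variations of $\alpha$ contribute nothing (since $\partial_\alpha E[\mu_\alpha]=0$ at equilibrium), so that by \eqref{eqn:energy-s}
\[
\frac{\dd}{\dd\kappa}E[\mu_\kappa]=\frac{\partial E}{\partial\kappa}[\mu_\kappa]=\tfrac12\bigl(1-\|c_{\mu_\kappa}\|^2\bigr).
\]
In particular $\frac{\dd}{\dd\kappa}E[\rhou]=\tfrac12$, $\frac{\dd}{\dd\kappa}E[\rho_\kappa]=\tfrac12(1-s_\kappa^2)$, and $\frac{\dd}{\dd\kappa}E[\mu_{\alpha_\kappa}]=\tfrac12\bigl(1-(\alpha_\kappa+(1-\alpha_\kappa)\srho)^2\bigr)$, so the gaps $E[\rhou]-E[\rho_\kappa]$ and $E[\rhou]-E[\mu_{\alpha_\kappa}]$ are strictly increasing wherever defined; this already gives the monotonicity assertions of the theorem.

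For parts a) (cases i and ii) I would argue by elimination. For $\kappa<\kappa_1$ the uniform distribution is the only critical point (Section~\ref{sect:bifurcations-ml1}), hence the global minimizer. For $\kappa>\kappa_1$, $\rhou$ is unstable by Proposition~\ref{prop:stab-unif} and therefore not a minimizer, so the only remaining critical point is the global minimizer: $\rho_\kappa$ for $\kappa\in(\kappa_1,\infty)$ in case i), and $\rho_\kappa$ for $\kappa\in(\kappa_1,\kappa_2)$ together with $\mu_{\alpha_\kappa}$ for $\kappa\in(\kappa_2,\infty)$ in case ii). Continuity at $\kappa=\kappa_2$ (where $\rho_{\kappa_2}=\brho\,\dS$ is the $\alpha_\kappa\searrow0$ limit of $\mu_{\alpha_\kappa}$) glues the two regimes, and the gap monotonicity above finishes the proof of a).

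For part b) (case iii) I would proceed in three steps. \emph{Step 1:} the fully supported branch $\rho_\kappa$, which in case iii) exists only for $\kappa\in(\kappa_2,\kappa_1)$, is never the minimizer there, because $E[\rhou]-E[\rho_\kappa]$ is strictly increasing and vanishes at $\kappa=\kappa_1$ (as $\rho_\kappa\to\rhou$ when $\kappa\nearrow\kappa_1$), hence is strictly negative on $(\kappa_2,\kappa_1)$. \emph{Step 2:} along the family $\mu_\alpha=\alpha\delta_{x_0}+(1-\alpha)\brho\,\dS$ one computes, using \eqref{eqn:energy-mu-s2}, that $\frac{\dd}{\dd\alpha}E[\mu_\alpha]=(1-\srho)\bigl(g(\alpha)-f_\kappa(\alpha)\bigr)$ with $f_\kappa,g$ as in \eqref{eqn:fg}; since $g$ is convex and $f_\kappa$ affine, for $\kappa_3<\kappa<\kappa_2$ the smaller root $\alphaone$ is a local maximum and the larger root $\alphatwo$ a local minimum of $E$ along the family (so $E[\mu_{\alphatwo}]<E[\mu_{\alphaone}]$), while for $\kappa>\kappa_2$ the unique interior root $\alpha_\kappa$ is a local minimum; thus the only non-uniform candidate for a minimizer is the upper branch $\mu_{\alpha_\kappa}$. \emph{Step 3:} set $D(\kappa)=E[\rhou]-E[\mu_{\alpha_\kappa}]$ for $\kappa>\kappa_3$, which is continuous and strictly increasing. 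At $\kappa=\kappa_3$ the roots merge at the saddle-node point $\mu_{\overline{\alpha}}$, which is not a local minimum along the family and hence not a local minimum of $E$; since $\rhou$ is the only critical point for $\kappa<\kappa_3$, it is the global minimizer at $\kappa_3$ (by continuity of $\kappa\mapsto\min_\mu E[\mu]$), whence $D(\kappa_3)<0$. For $\kappa>\kappa_1$, $\rhou$ is unstable and $\mu_{\alpha_\kappa}$ is the only other critical point, so $D(\kappa)>0$. Hence there is a unique $\kappa_c\in(\kappa_3,\kappa_1]$ with $D(\kappa_c)=0$, the minimizer being $\rhou$ for $\kappa<\kappa_c$ and $\mu_{\alpha_\kappa}$ for $\kappa>\kappa_c$.

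The main obstacle is the \emph{strict} inequality $\kappa_c<\kappa_1$, i.e.\ that $\rhou$ fails to be a global minimizer at $\kappa=\kappa_1$: the second-variation criterion of Proposition~\ref{prop:stab-unif} is only marginal there, so one must exploit the subcriticality of the bifurcation at $\kappa_1$ in case iii) (the branch $\rho_\kappa$ exists for $\kappa<\kappa_1$). I would resolve it by a Lyapunov--Schmidt / amplitude expansion of $E$ near $\rhou$ in the first-spherical-harmonic direction: subcriticality forces the quartic coefficient to be negative exactly when $m<1-\tfrac{2}{\dm-1}$, so $\rhou$ is then a local maximum of $E$ along that direction at $\kappa=\kappa_1$ and not the global minimizer; equivalently, one exhibits a probability measure with energy strictly below $E[\rhou]$ at some $\kappa<\kappa_1$, which by strict monotonicity of $D$ gives $\kappa_c<\kappa_1$. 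With $\kappa_c<\kappa_1$ secured, the characterization of the global minimizer for all $\kappa$ follows, and the monotonicity of $E[\rhou]-E[\mu_{\alphatwo}]$ on $\kappa>\kappa_3$ is the envelope computation of the first paragraph.
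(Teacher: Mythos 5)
Your overall strategy coincides with the paper's: catalogue the critical points and track the pairwise energy differences as functions of $\kappa$, using the fact that along an equilibrium branch $\frac{\d}{\d\kappa}E[\mu_\kappa]=\frac12\bigl(1-\|c_{\mu_\kappa}\|^2\bigr)$. This envelope identity is exactly what the paper obtains by explicit computation in \eqref{eqn:dkappa-g1g2} and \eqref{eqn:dEdiff-dk-f}, and your derivation of it is a cleaner packaging. Two of your ingredients are genuinely different and sound: the existence argument via weak-$*$ lower semicontinuity of the entropy (the paper leaves existence implicit), and the observation that $\frac{\d}{\d\alpha}E[\mu_\alpha]=(1-\srho)\bigl(g(\alpha)-f_\kappa(\alpha)\bigr)$, which identifies $\alphaone$ as a local maximum and $\alphatwo$ as a local minimum of $E$ along the family and yields $E[\mu_{\alphatwo}]<E[\mu_{\alphaone}]$ more directly than the paper's Step 3.

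The genuine gap is the strict inequality $\kappa_c<\kappa_1$ in part b), which you correctly flag as the main obstacle but do not close. Your elimination argument only gives $D(\kappa_1)\geq 0$: instability of $\rhou$ for $\kappa>\kappa_1$ plus continuity and monotonicity of $D$ does not exclude $D(\kappa_1)=0$. The proposed Lyapunov--Schmidt expansion is not carried out; the step ``subcriticality forces the quartic coefficient to be negative'' requires verifying that the reduced energy is a nondegenerate even quartic ($c\neq 0$) and actually computing its sign, so as written it is an assertion rather than a proof. The paper closes this gap by a fourth comparison that is available inside your own framework: by the envelope identity,
\[
\frac{\d}{\d\kappa}\bigl(E[\rho_\kappa]-E[\mu_{\alphatwo}]\bigr)=\tfrac12\bigl(\alphatwo+(1-\alphatwo)\srho\bigr)^2-\tfrac12 s_\kappa^2
\quad\text{on }(\kappa_2,\kappa_1),
\]
and this is positive because $\alphatwo>0$ and $s_\kappa\leq\srho$ there; the latter is the one additional fact you are missing, namely that $s_\kappa$ in \eqref{eqn:skappa} is a decreasing function of $\eta_\kappa$, so that $s_\kappa\leq s_{\kappa_2}=\srho$ (proved in the paper's Step 4). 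Since $E[\rho_{\kappa_2}]-E[\mu_{\alphatwo}]>0$ at $\kappa=\kappa_2$ (where $\rho_{\kappa_2}$ coincides with the local-maximum branch $\mu_{\alphaone}$ at $\alphaone=0$) and the difference increases up to $\kappa_1$, one obtains $E[\rhou]=E[\rho_{\kappa_1}]>E[\mu_{\alphatwo}]$ at $\kappa_1$, i.e.\ $D(\kappa_1)>0$ and hence $\kappa_c<\kappa_1$. You should replace the bifurcation expansion by this comparison, or else supply the quartic computation in full.
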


\begin{proof} We will compare the energies of the various equilibria that exist in each case, for various ranges of $\kappa$.
\smallskip

{\em Step 1: Compare $E[\rhou]$ with $E[\rho_\kappa]$.} This calculation applies to case i) when $\kappa>\kappa_1$, to case ii) when $\kappa_1<\kappa<\kappa_2$, and to case iii) when $\kappa_2<\kappa<\kappa_1$.

By \eqref{eqn:energy-s} we write
\begin{equation}
\label{eqn:Ediff-rhok}
E[\rhou]-E[\rho_\kappa] = \frac{1}{m-1} \int_{\bbs^\dm} \rhou^m(x) \dS - \frac{1}{m-1} \int_{\bbs^\dm}  \rho_\kappa(x)^m \dx +  \frac{\kappa}{2} \|c_{\rho_\kappa}\|^2.
\end{equation}
The first term in the r.h.s. above is a constant, and it does not change with $\kappa$. We will investigate how the other two terms change with $\kappa$.

Using \eqref{eqn:skappa-eta} and \eqref{eqn:rhok}, we compute
\begin{equation}
\label{eqn:Ediff-rhok-c}
\begin{aligned}
\frac{\kappa}{2} \|c_{\rho_\kappa}\|^2 - \frac{1}{m-1} \int_{\bbs^\dm} \rho_\kappa^m(x) \dS &= \frac{m}{2(1-m) (\dm w_\dm)^{m-1}} \frac{\int_0^\pi (\eta_\kappa - \cos \theta)^{\frac{1}{m-1}} \sin^{\dm-1} \theta \cos \theta \d \theta}{\left( \int_0^\pi (\eta_\kappa - \cos \theta)^{\frac{1}{m-1}} \sin^{\dm-1} \theta \d \theta \right)^m} \\[3pt]
&\quad - \frac{1}{(m-1) (\dm w_\dm)^{m-1}} \frac{\int_0^\pi (\eta_\kappa - \cos \theta)^{\frac{m}{m-1}} \sin^{\dm-1} \theta \d \theta}{\left( \int_0^\pi (\eta_\kappa - \cos \theta)^{\frac{1}{m-1}} \sin^{\dm-1} \theta \d \theta \right)^m} \\[3pt]
&:= g_1(\eta_\kappa) g_2(\eta_\kappa),
\end{aligned}
\end{equation}
where 
\begin{align*}
g_1(\eta) &= m \int_0^\pi (\eta - \cos \theta)^{\frac{1}{m-1}} \sin^{\dm-1} \theta \cos \theta \d \theta + 2 \int_0^\pi (\eta - \cos \theta)^{\frac{m}{m-1}} \sin^{\dm-1} \theta \d \theta, \\[5pt]
g_2(\eta) &= \frac{1}{2(1-m)(\dm w_\dm)^{m-1}} \frac{1}{\left( \int_0^\pi (\eta - \cos \theta)^{\frac{1}{m-1}} \sin^{\dm-1} \theta \d \theta \right)^m}.
\end{align*}

Calculate 
\begin{align*}
g_1'(\eta) &= \frac{m}{m-1} \int_0^\pi (\eta - \cos \theta)^{\frac{1}{m-1}-1} \sin^{\dm-1} \theta \cos \theta \d \theta + \frac{2m}{m-1} \int_0^\pi (\eta - \cos \theta)^{\frac{1}{m-1}} \sin^{\dm-1} \theta \d \theta \\[3pt]
&= \frac{m}{m-1} \int_0^\pi (\eta - \cos \theta)^{\frac{1}{m-1}-1} \sin^{\dm-1} \theta \cos \theta \d \theta \\[3pt]
& \quad + \frac{2m}{m-1} \int_0^\pi (\eta - \cos \theta)^{\frac{1}{m-1}-1} (\eta - \cos \theta) \sin^{\dm-1} \theta \d \theta \\[3pt]
& = \frac{2m \eta}{m-1} \int_0^\pi (\eta - \cos \theta)^{\frac{1}{m-1}-1} \sin^{\dm-1} \theta \d \theta - \frac{m}{m-1} \int_0^\pi (\eta - \cos \theta)^{\frac{1}{m-1}-1} \sin^{\dm-1} \theta \cos \theta \d \theta,
\end{align*}
and 
\begin{align*}
g_2'(\eta) = \frac{m}{2(1-m)^2 (\dm w_\dm)^{m-1}} \frac{\int_0^\pi (\eta - \cos \theta)^{\frac{1}{m-1}-1} \sin^{\dm-1} \theta \d \theta}{ \left( \int_0^\pi (\eta - \cos \theta)^{\frac{1}{m-1}} \sin^{\dm-1} \theta \d \theta \right)^{m+1}}.
\end{align*}
Then,
\begin{align*}
& (g_1(\eta) g_2(\eta))' = g_1'(\eta) g_2(\eta) + g_1(\eta) g_2'(\eta) \\[2pt]
&\quad = \frac{\frac{m}{1-m} \int_0^\pi (\eta - \cos \theta)^{\frac{1}{m-1}-1} \sin^{\dm-1} \theta \d \theta }{2(1-m)(\dm w\dm)^{m-1} \left( \int_0^\pi (\eta - \cos \theta)^{\frac{1}{m-1}} \sin^{\dm-1} \theta \d \theta \right)^m} \times \\[3pt]
&\qquad \left(  -2 \eta + \frac{\int_0^\pi (\eta - \cos \theta)^{\frac{1}{m-1}-1} \sin^{\dm-1} \theta \cos \theta \d \theta}{\int_0^\pi (\eta - \cos \theta)^{\frac{1}{m-1}-1} \sin^{\dm-1} \theta \d \theta}\right. \\[3pt]
&\quad \qquad \left. + \frac{m \int_0^\pi (\eta - \cos \theta)^{\frac{1}{m-1}} \sin^{\dm-1} \theta \cos \theta \d \theta + 2 \int_0^\pi (\eta - \cos \theta)^{\frac{m}{m-1}} \sin^{\dm-1} \theta \d \theta}{\int_0^\pi (\eta - \cos \theta)^{\frac{1}{m-1}} \sin^{\dm-1} \theta \d \theta} \right).
\end{align*}

Now write 
\begin{align*}
\int_0^\pi (\eta - \cos \theta)^{\frac{m}{m-1}} \sin^{\dm-1} \theta \d \theta &= \int_0 ^\pi (\eta - \cos \theta)^{\frac{1}{m-1}} (\eta - \cos \theta)\sin^{\dm-1} \theta \d \theta \\[3pt]
&= \eta  \int_0 ^\pi (\eta - \cos \theta)^{\frac{1}{m-1}} \sin^{\dm-1} \theta \d \theta - \int_0^\pi (\eta - \cos \theta)^{\frac{1}{m-1}} \sin^{\dm-1} \theta \cos \theta \d \theta,
\end{align*}
to simplify
\begin{equation}
\label{eqn:g1g2p-s}
\begin{aligned}
 (g_1(\eta) g_2(\eta))' &= \frac{\frac{m}{1-m} \int_0^\pi (\eta - \cos \theta)^{\frac{1}{m-1}-1} \sin^{\dm-1} \theta \d \theta }{2(1-m)(\dm w\dm)^{m-1} \left( \int_0^\pi (\eta - \cos \theta)^{\frac{1}{m-1}} \sin^{\dm-1} \theta \d \theta \right)^m} \times \\[3pt]
& \quad \left( \frac{\int_0^\pi (\eta - \cos \theta)^{\frac{1}{m-1}-1} \sin^{\dm-1} \theta \cos \theta \d \theta}{\int_0^\pi (\eta - \cos \theta)^{\frac{1}{m-1}-1} \sin^{\dm-1} \theta \d \theta} + (m-2) \frac{ \int_0^\pi (\eta - \cos \theta)^{\frac{1}{m-1}} \sin^{\dm-1} \theta \cos \theta \d \theta}{\int_0^\pi (\eta - \cos \theta)^{\frac{1}{m-1}} \sin^{\dm-1} \theta \d \theta} \right).
\end{aligned}
\end{equation}

By a direct calculation using the expression of $H(\eta)$ in \eqref{eqn:H}, we compute
\begin{equation}
\label{eqn:HpoverH}
\frac{H'(\eta)}{H(\eta)} = \frac{1}{m-1} \, \frac{\int_0^\pi (\eta-\cos \theta)^{\frac{1}{m-1}-1} \sin^{\dm-1} \theta \cos \theta \d \theta}{\int_0^\pi (\eta-\cos \theta)^{\frac{1}{m-1}} \sin^{\dm-1} \theta \cos \theta \d \theta} +  \frac{m-2}{m-1} \, \frac{\int_0^\pi (\eta-\cos \theta)^{\frac{1}{m-1}-1} \sin^{\dm-1} \theta \d \theta}{\int_0^\pi (\eta-\cos \theta)^{\frac{1}{m-1}} \sin^{\dm-1} \theta \d \theta}.
\end{equation}
Then, by combining \eqref{eqn:g1g2p-s} and \eqref{eqn:HpoverH}, we write
\begin{equation}
\label{eqn:g1g2-diff}
(g_1(\eta) g_2(\eta))' = - (1-m) h(\eta)\frac{H'(\eta)}{H(\eta)},
\end{equation}
where
\begin{equation}
\label{eqn:h}
h(\eta) := \frac{m}{2(1-m)^2 (\dm w_\dm)^{m-1}} \, \frac{\int_0^\pi (\eta - \cos \theta)^{\frac{1}{m-1}} \sin^{\dm-1} \theta \cos \theta \d \theta }{\left( \int_0^\pi (\eta - \cos \theta)^{\frac{1}{m-1}} \sin^{\dm-1} \theta \d \theta \right)^m}.
\end{equation}

 Note that $h(\eta)>0$. To show this, we only need to investigate the sign of $\int_0^\pi (\eta - \cos \theta)^{\frac{1}{m-1}} \sin^{\dm-1} \theta \cos \theta \d \theta$. Since $\frac{1}{m-1}<0$, we find $(\eta - \cos \theta)^{\frac{1}{m-1}}-(\eta +\cos \theta)^{\frac{1}{m-1}}
>0$ for all $0\leq \theta< \frac{\pi}{2}$ and $\eta>1$. Therefore, we have
\begin{align*}
&\int_0^\pi (\eta - \cos \theta)^{\frac{1}{m-1}} \sin^{\dm-1} \theta \cos \theta \d \theta\\
&=\int_0^{\pi/2} (\eta - \cos \theta)^{\frac{1}{m-1}} \sin^{\dm-1} \theta \cos \theta \d \theta+\int_{\pi/2}^\pi (\eta - \cos \theta)^{\frac{1}{m-1}} \sin^{\dm-1} \theta \cos \theta \d \theta\\
&=\int_0^{\pi/2} (\eta - \cos \theta)^{\frac{1}{m-1}} \sin^{\dm-1} \theta \cos \theta \d \theta-\int_0^{\pi/2} (\eta +\cos \theta)^{\frac{1}{m-1}} \sin^{\dm-1} \theta \cos \theta \d \theta\\
&=\int_0^{\pi/2} \left((\eta - \cos \theta)^{\frac{1}{m-1}}-(\eta +\cos \theta)^{\frac{1}{m-1}}\right) \sin^{\dm-1} \theta \cos \theta \d \theta>0,
\end{align*}
where for the second equal sign we made a change of variable $\tilde{\theta} = \pi - \theta$.

Since $h(\eta)>0$ and $H(\eta)>0,$ the sign of $(g_1(\eta)g_2(\eta))'$ is opposite to the sign of $H'(\eta)$. For cases i) and ii), $H(\eta)$ is increasing with $\eta>1$, while for case iii), $H(\eta)$ is decreasing -- see Lemma \ref{lem:H-monotone}.  Consequently, $g_1(\eta)g_2(\eta)$ is decreasing with $\eta$ in cases i) and ii), and increasing with $\eta$ in case iii).

From \eqref{eqn:Ediff-rhok} and \eqref{eqn:Ediff-rhok-c}, we have
\begin{equation}
\label{eqn:E-unimrhok}
E[\rhou]-E[\rho_\kappa] = \text{const.} + g_1(\eta_\kappa) g_2(\eta_\kappa).
\end{equation}
In cases i) and ii), $\eta_\kappa$ decreases as $\kappa$ increases from $\kappa_1$ (since $H(\eta)$ is increasing). Therefore, by the considerations above, $g_1(\eta_\kappa) g_2(\eta_\kappa)$ (and hence the energy difference $E[\rhou]-E[\rho_\kappa]$) increases with $\kappa$. Note that since $\rho_\kappa$ forms at $\kappa=\kappa_1$, the energy difference starts from value $0$. We also point out that for $\kappa<\kappa_1$ in cases i) and ii), the uniform distribution is the only critical point, and therefore the global minimizer. We have now partially shown part a) of the proposition: case i) is complete, and case ii) is shown for $\kappa_1<\kappa<\kappa_2$. For a numerical illustration we refer to Figures \ref{fig:m05-se}(b) and \ref{fig:m025-se}(b).

In case iii), $\eta_\kappa$ increases as $\kappa$ increases from $\kappa_2$ to $\kappa_1$ (since $H(\eta)$ is decreasing). Then, by the monotonicity of $g_1(\eta)g_2(\eta)$, $g_1(\eta_\kappa) g_2(\eta_\kappa)$ (and hence the energy difference $E[\rhou]-E[\rho_\kappa]$) increases as $\kappa$ ranges from $\kappa_2$ to $\kappa_1$. Since at $\kappa=\kappa_1$, $\rho_\kappa$ coincides with $\rhou$, we conclude that $E[\rhou]-E[\rho_\kappa]$ increases from a negative value at $\kappa=\kappa_2$ to value $0$ at $\kappa=\kappa_1$; see Figure \ref{fig:m03-se}(b) for a numerical illustration.
\smallskip

{\em Step 2: Compare $E[\rhou]$ with $E[\mu_{\alphatwo}]$ and $E[\mu_{\alphaone}]$.} This calculation applies to case ii) when $\kappa>\kappa_2$, and to case iii). Note that for case iii), there exist two measure-valued solutions: $\mu_{\alphatwo}$ (for $\kappa>\kappa_3$) and $\mu_{\alphaone}$ (for $\kappa_3<\kappa<\kappa_2$), where $0<\alphaone<\alphatwo<1$. Recall that both $\mu_{\alphaone}$ and $\mu_{\alphatwo}$ are in the form \eqref{eqn:mu-alphak}, where the density $\brho$ does not depend on $\kappa$ -- see \eqref{eqn:equil-brho}. Also recall notation \eqref{eqn:srho}.

By \eqref{eqn:energy-mu-s}, we have
\begin{equation}
\label{eqn:Ediff}
\begin{aligned}
E[\rhou]-E[\mu_{\alpha_\kappa}] & = \frac{1}{m-1} \int_{\bbs^\dm} \rhou^m(x) \dS - \frac{1}{m-1} \int_{\bbs^\dm} (1-\alpha_\kappa)^m \brho(x)^m \dx + \frac{\kappa}{2} \| c_{\mu_{\alpha_\kappa}} \|^2 \\[5pt]
& = \frac{1}{m-1} \int_{\bbs^\dm} \rhou^m(x) \dS - \frac{1}{m-1} \int_{\bbs^\dm} (1-\alpha_\kappa)^m \brho(x)^m \dx  \\[5pt]
& \quad +  \frac{\kappa}{2} \left( \alpha_\kappa + (1-\alpha_\kappa) \srho \right)^2.
\end{aligned}
\end{equation}

Take a derivative with $\kappa$ in \eqref{eqn:Ediff} to get
\begin{equation}
\label{eqn:dEdiff-dk}
\begin{aligned}
\frac{\d}{\d \kappa} \left( E[\rhou]-E[\mu_{\alpha_\kappa}]\right) &= \frac{m}{m-1} \left( \int_{\bbs^\dm} (1-\alpha_\kappa)^{m-1} \brho(x)^m \dx \right) \frac{\d \alpha_\kappa}{\d \kappa} \\[3pt]
&\quad +
\frac{1}{2} \left( \alpha_\kappa + (1-\alpha_\kappa) \srho\right)^2 + \kappa (1-\srho) \left( \alpha_\kappa + (1-\alpha_\kappa) \srho \right) \frac{\d \alpha_\kappa}{\d \kappa}.
\end{aligned}
\end{equation}

We will show that the terms containing $\frac{\d \alpha_\kappa}{\d \kappa}$ cancel each other.  Indeed, using \eqref{eqn:equil-brho} we have
\begin{equation}
\label{eqn:entropyk}
\frac{m}{m-1}  \int_{\bbs^\dm} (1-\alpha_\kappa)^{m-1} \brho(x)^m \dx = \frac{m}{m-1} (\dm w_\dm)^{1-m} (1-\alpha_\kappa)^{m-1} \frac{\int_0^\pi (1-\cos \theta)^{\frac{m}{m-1}} \sin^{\dm-1} \theta \d \theta }{\left( \int_0^\pi (1 -  \cos \theta)^{\frac{1}{m-1}} \sin^{\dm-1}\theta \d \theta \right)^m}.
\end{equation}
By combining \eqref{eqn:lambda-kappa} and \eqref{eqn:mlambda} -- use $\brho$ instead of $\rho$ and $\alpha_\kappa$ instead of $\alpha$ there, we find
\begin{equation}
\label{eqn:factor1}
\begin{aligned}
\kappa \left( \alpha_\kappa + (1-\alpha_\kappa)\srho \right) &= - \frac{\lambda}{1-\alpha_\kappa} \\[5pt]
& = \frac{m}{1-m} (\dm w_\dm)^{1-m} (1-\alpha_\kappa)^{m-1} \frac{1} {\left( \int_0^\pi (1 -  \cos \theta)^{\frac{1}{m-1}} \sin^{\dm-1}\theta \d \theta \right)^{m-1}},
\end{aligned}
\end{equation}
which used in \eqref{eqn:entropyk} leads to
\begin{equation}
\frac{m}{m-1}  \int_{\bbs^\dm} (1-\alpha_\kappa)^{m-1} \brho(x)^m \dx = - \kappa \left( \alpha_\kappa + (1-\alpha_\kappa) \srho \right) \cdot  \frac{\int_0^\pi (1-\cos \theta)^{\frac{m}{m-1}} \sin^{\dm-1} \theta \d \theta }{\int_0^\pi (1 -  \cos \theta)^{\frac{1}{m-1}} \sin^{\dm-1}\theta \d \theta }.
\end{equation}

Now write 
\[
\int_0^\pi (1-\cos \theta)^{\frac{m}{m-1}} \sin^{\dm-1} \theta \d \theta  = \int_0^\pi (1-\cos \theta)^{\frac{1}{m-1}} (1-\cos \theta) \sin^{\dm-1} \theta \d \theta, 
\]
and then by \eqref{eqn:srho-kgk2} we get
\begin{equation}
\label{eqn:factor2}
\begin{aligned}
\frac{\int_0^\pi (1-\cos \theta)^{\frac{m}{m-1}} \sin^{\dm-1} \theta \d \theta }{\int_0^\pi (1 -  \cos \theta)^{\frac{1}{m-1}} \sin^{\dm-1}\theta \d \theta } &= 1- \frac{\int_0^\pi (1-\cos \theta)^{\frac{1}{m-1}} \sin^{\dm-1} \theta \cos \theta \d \theta}{\int_0^\pi (1 -  \cos \theta)^{\frac{1}{m-1}} \sin^{\dm-1} \theta \d \theta} \\[5pt]
&= 1-\srho.
\end{aligned}
\end{equation}

By using \eqref{eqn:factor1} and \eqref{eqn:factor2} in \eqref{eqn:entropyk}, we then find
\begin{equation}
\label{eqn:entropyk-f}
\frac{m}{m-1}  \int_{\bbs^\dm} (1-\alpha_\kappa)^{m-1} \brho(x)^m \dx = - \kappa(1-\srho) \left( \alpha_\kappa + (1-\alpha_\kappa)\srho \right).
\end{equation}
Return now to the r.h.s of \eqref{eqn:dEdiff-dk} and note that the terms containing $\frac{\d \alpha_\kappa}{\d \kappa}$ cancel each other. Hence, we get
\begin{equation}
\label{eqn:dEdiff-dk-f}
\frac{\d}{\d \kappa} \left( E[\rhou]-E[\mu_{\alpha_\kappa}]\right) = \frac{1}{2} \left( \alpha_\kappa + (1-\alpha_\kappa) \srho\right)^2,
\end{equation}
which is positive for all $0\leq \alpha_\kappa\leq 1$. 

We conclude that $E[\rhou]-E[\mu_{\alpha_\kappa}]$ increases with $\kappa$. Together with the previous considerations, this completes the conclusion of the theorem in case ii) -- See Figure \ref{fig:m025-se}(b) for an illustration, and hence, part (a) of the theorem is now shown. 
\smallskip

Similar to \eqref{eqn:dEdiff-dk-f} - just replace $\alphatwo$ with $\alphaone$ in the derivation above, we find
\begin{equation}
\label{eqn:dEdiff-dk-tilde}
\frac{\d}{\d \kappa} \left( E[\rhou]-E[\mu_{\alphaone}]\right) = \frac{1}{2} \left( \alphaone + (1-\alphaone) \srho\right)^2,
\end{equation}
which is also positive for all $0\leq \alphaone \leq 1$. We infer that the energy difference $E[\rhou]-E[\mu_{\alphaone}]$ increases with $\kappa$, for $\kappa_3<\kappa<\kappa_2$. Note that at $\kappa=\kappa_2$, $\mu_{\alphaone}$ coincides with $\rho_\kappa$, and we know from the previous calculations that $E[\rhou]-E[\rho_\kappa]<0$ at $\kappa=\kappa_2$.  Hence, $E[\rhou]-E[\mu_{\alphaone}]$ increases from a negative value at $\kappa=\kappa_3$, and for all $\kappa_3<\kappa<\kappa_2$ we have $E[\rhou]<E[\mu_{\alphaone}]$. Also see Figure \ref{fig:m03-se}(b) for a numerical illustration.

Regarding $\mu_{\alphatwo}$, first note that it coincides with $\mu_{\alphaone}$ at $\kappa=\kappa_3$, and by the above, $\rhou$ is more energetically favourable at $\kappa_3$. Hence, $E[\rhou]-E[\mu_{\alphatwo}]$ increases from a negative value at $\kappa=\kappa_3$. By \eqref{eqn:dEdiff-dk-f}, the rate of change of $E[\rhou]-E[\mu_{\alphatwo}]$ is strictly positive, so there exists a $\kappa_c > \kappa_3$ (a fourth critical value of $\kappa$) where $E[\rhou]-E[\mu_{\alphatwo}]=0$, and past $\kappa>\kappa_c$, we have $E[\mu_{\alphatwo}] < E[\rhou]$. To conclude part (b) and hence the proof, it remains to compare the energies of the two measure-valued solutions $\mu_{\alphaone}$ and $\mu_{\alphatwo}$ (for $\kappa_3 <\kappa<\kappa_2)$ and the energies of $\rho_\kappa$ and $\mu_{\alphatwo}$ (for $\kappa_2<\kappa<\kappa_1)$.
\smallskip

{\em Step 3: Compare $E[\mu_{\alphaone}]$ and $E[\mu_{\alphatwo}]$.} This part concerns case iii) only, with $\kappa_3<\kappa<\kappa_2$. By \eqref{eqn:energy-mu-s2} and \eqref{eqn:mu-alphak} we have 
\begin{equation}
\label{eqn:Ekdiff}
\begin{aligned}
E[\mu_{\alphaone}]-E[\mu_{\alphatwo}] & = \frac{1}{m-1} \int_{\bbs^\dm} (1-\alphaone)^m \brho(x)^m \dx  
-  \frac{\kappa}{2} \left( \alphaone + (1-\alphaone) \srho \right)^2 \\
&\quad  - \frac{1}{m-1} \int_{\bbs^\dm} (1-\alphatwo)^m \brho(x)^m \dx  +  \frac{\kappa}{2} \left( \alphatwo + (1-\alphatwo) \srho \right)^2.
\end{aligned}
\end{equation}
From a calculation similar to that leading from \eqref{eqn:dEdiff-dk} to \eqref{eqn:dEdiff-dk-f}, we get
\begin{align*}
\frac{\d} {\d \kappa} \left( E[\mu_{\alphaone}]-E[\mu_{\alphatwo}]\right) = \frac{1}{2} \left( \alphatwo + (1-\alphatwo) \srho \right)^2 - \frac{1}{2} \left( \alphaone + (1-\alphaone) \srho \right)^2.
\end{align*}
Since $\alphaone<\alphatwo$, the r.h.s. above is positive, and we infer that $E[\mu_{\alphaone}]-E[\mu_{\alphatwo}]$ increases with $\kappa$. For the range $\kappa_3<\kappa<\kappa_2$ (where $\mu_{\alphaone}$ exists) we then have
$E[\mu_{\alphaone}]>E[\mu_{\alphatwo}]$.
\smallskip

{\em Step 4: Compare $E[\rho_\kappa]$ and $E[\mu_{\alphatwo}]$.} This calculations applies to case iii), for $\kappa_2<\kappa<\kappa_1$. Using \eqref{eqn:E-unimrhok} and \eqref{eqn:dEdiff-dk-f}, we get
\begin{equation}
\label{eqn:dE-rhomu}
\frac{\d}{\d \kappa} \left( E[\rho_\kappa]-E[\mu_{\alpha_\kappa}]\right) = \frac{1}{2} \left( \alpha_\kappa + (1-\alpha_\kappa) \srho\right)^2 - \frac{\d}{\d \kappa} (g_1(\eta_\kappa) g_2(\eta_\kappa)).
\end{equation}
We will show that the r.h.s. above is positive.

By chain rule we have
\begin{equation}
\label{eqn:chain-rule}
\frac{\d}{\d \kappa} (g_1(\eta_\kappa) g_2(\eta_\kappa))= \frac{\d}{\d \eta_\kappa} (g_1(\eta_\kappa) g_2(\eta_\kappa)) \frac{\d \eta_\kappa}{\d \kappa}.
\end{equation}
Now differentiate 
\[
{H(\eta_\kappa)}^{-1} = \kappa,
\]
with respect to $\kappa$, to get
\begin{equation}
\label{eqn:detakdk}
-\frac{H'(\eta_\kappa)}{H^2(\eta_\kappa)} \frac{\d \eta_\kappa}{\d \kappa} =1.
\end{equation}
Then, using \eqref{eqn:chain-rule} together with \eqref{eqn:g1g2-diff} and \eqref{eqn:detakdk},  we find
\begin{align*}
\frac{\d}{\d \kappa} (g_1(\eta_\kappa) g_2(\eta_\kappa)) &= -(1-m) h(\eta_\kappa) \frac{H'(\eta_\kappa)}{H^2(\eta_\kappa)} \left( -\frac{H^2(\eta_\kappa)}{H'(\eta_\kappa)} \right) \\
&= \frac{1}{2} \left( \frac{\int_0^\pi (\eta_\kappa-\cos \theta)^{\frac{1}{m-1}} \sin^{\dm-1}\theta \cos \theta \d \theta}{\int_0^\pi (\eta_\kappa-\cos \theta)^{\frac{1}{m-1}} \sin^{\dm-1}\theta \d \theta}\right)^2,
\end{align*}
where for the second equal sign we used the expressions of $h(\eta)$ and $H(\eta)$ from \eqref{eqn:h} and \eqref{eqn:H}.

From \eqref{eqn:skappa} we find from the above that 
\begin{equation}
\label{eqn:dkappa-g1g2}
\frac{\d}{\d \kappa} (g_1(\eta_\kappa) g_2(\eta_\kappa)) = \frac{1}{2} s_\kappa^2.
\end{equation}
Here, $\eta_\kappa$ ranges from $1$ (for $\kappa = \kappa_2$) to $\infty$ (for $\kappa = \kappa_1$). By using \eqref{eqn:dkappa-g1g2} in \eqref{eqn:dE-rhomu}, we get
\begin{equation}
\label{eqn:Ediff-rhomu}
\frac{\d}{\d \kappa} \left( E[\rho_\kappa]-E[\mu_{\alpha_\kappa}]\right) = \frac{1}{2} \left( \alpha_\kappa + (1-\alpha_\kappa) \srho \right)^2 - \frac{1}{2} s_\kappa^2.
\end{equation}

Note that $\srho = s_{\kappa_2}$ (see \eqref{eqn:s-kgk2} and \eqref{eqn:srho-kgk2}), as $\eta_{\kappa_2}=1$. Now, since $\alpha_\kappa>0$ we have
\[
\alpha_\kappa + (1-\alpha_\kappa) \srho > \srho,
\]
and to show that the r.h.s. of \eqref{eqn:Ediff-rhomu} is positive, it is enough to show that $s_\kappa$ decreases with increasing $\kappa$ (or equivalently, with increasing $\eta_\kappa$), as this implies $\srho>s_\kappa$, for all $\kappa_2<\kappa<\kappa_1$. This behaviour was in fact observed numerically -- see the red dashed line in Figure \ref{fig:m03-se}(a).

We can show that $s_\kappa$ is a decreasing function of $\eta_\kappa$ by using the expression of $H$ from \eqref{eqn:H}, and express $s_\kappa$ in terms of $H(\eta_\kappa)$ as 
\[
s_\kappa=\left(\frac{m}{(1-m)(\dm w_\dm)^{m-1}}H(\eta_\kappa)\right)\left(\int_0^\pi (\eta_\kappa-\cos\theta)^{\frac{1}{m-1}}\sin^{\dm-1}\theta \d\theta\right)^{1-m}.
\]
From Lemma \ref{lem:H-monotone}, we know that $H$ is a decreasing function for $0<m<1-\frac{2}{\dm-1}$. Since $\frac{1}{m-1}<0$ and $1-m>0$, we also infer that $\left(\int_0^\pi (\eta_\kappa-\cos\theta)^{\frac{1}{m-1}}\sin^{\dm-1}\theta \d\theta\right)^{1-m}$ is a decreasing function of $\eta_\kappa$. Therefore, as a product of two positive decreasing functions, $s_\kappa$ is also a decreasing function.

We deduce that the r.h.s. of \eqref{eqn:Ediff-rhomu} is positive for $\kappa_2<\kappa<\kappa_1$. We have $E[\rho_\kappa] - E[\mu_{\alphatwo}]>0$ at $\kappa=\kappa_2$ (as $\rho_\kappa$ coincides with $\mu_{\alphaone}$ there, and use Step 3), and from there on, the energy difference will increase to a larger positive value at $\kappa=\kappa_1$. Since $\rho_\kappa$ coincides with $\rhou$ at $\kappa=\kappa_1$, we find that $E[\rhou] > E[\mu_{\alphatwo}]$ at $\kappa_1$. By Step 2, $E[\rhou] < E[\mu_{\alphatwo}]$ at $\kappa=\kappa_3$, which implies that the fourth critical value $\kappa_c$ is between $\kappa_3$ and $\kappa_1$. By the considerations in Steps 2-4, we now conclude part b) of the proof. 

\end{proof}


\appendix
\section{Proof of Lemma \ref{Lemma:Dirac-delta-concentration}}
\label{appendix:Dirac-delta-concentration}

Assume that $\mu$ in the form \eqref{eqn:mu-decomp} is a critical point of the energy $E[\mu]$, where $\alpha>0$ and $\mu_s$ is not a Dirac delta measure concentrated at one point. By \eqref{eqn:energy-s} and \eqref{eqn:mu-decomp}, we write
\begin{equation}
\label{eqn:mu-decomp-s}
E[\mu]=\frac{1}{m-1}\int_{\bbs^\dm} (1-\alpha)^m \rho(x)^m\dx-\frac{\kappa}{2}\|\alpha c_{\mu_s}+(1-\alpha)c_\rho\|^2+\frac{\kappa}{2},
\end{equation}
where 
\[
c_{\mu_s}=\int_{\bbs^\dm}x\d {\mu_s}(x), \qquad \text{ and }\qquad c_\rho=\int_{\bbs^\dm} x \rho(x)\dx.
\]
As we can see from \eqref{eqn:mu-decomp-s}, $c_{\mu_s}$ is the only term that depends on the singular measure $\mu_s$. To show that $\mu$ cannot be a ground state, we will provide a change of $\mu_s$ which decreases the total energy, i.e., increases $\|\alpha c_{\mu_s}+(1-\alpha)c_\rho\|$. 

 First, if $\|c_\rho\|\neq0$ and the directions of $c_\rho$ and $c_{\mu_s}$ are not aligned, then a rotation that aligns the two directions decreases the energy. Indeed, assume $c_\rho$ and $c_{\mu_s}$ are not parallel. Denote by $\tilde{\mu}_s$ the singular measure obtained by rotating $\mu_s$ so that $c_\rho$ and $c_{\tilde{\mu}_s}$ are aligned, i.e., $c_\rho = \|c_\rho\| z$ and $c_{\tilde{\mu}_s} = \|c_{\tilde{\mu}_s}\| z$, for some unit vector $z \in \bbs^\dm$. Also denote 
\[
\tilde{\mu} = \alpha \tilde{\mu}_s + (1-\alpha) \rho \, \dS.
\]
Then, we have
\begin{align*}
\| \alpha c_{\mu_s}+(1-\alpha)c_\rho\| &< \alpha \|c_{\mu_s}\| +(1-\alpha) \|c_\rho\| \\
&= \alpha \|c_{\tilde{\mu}_s}\| +(1-\alpha) \|c_\rho\| \\
&= \| \alpha c_{\tilde{\mu}_s}+(1-\alpha)c_\rho\|,
\end{align*}
where we used the triangle inequality, the fact that a rotation preserves the norm of the centre of mass, and finally, that $c_\rho$ and $c_{\tilde{\mu}_s}$ are aligned. From this calculation we infer that by the rotation, $\|\alpha c_{\mu_s}+(1-\alpha)c_\rho\|$ increases, and hence, the total energy will be decreased (i.e., $E[\tilde{\mu}] \leq E[\mu]$). 

Based on the considerations above, the cases we have to consider are: either $\|c_\rho\|=0$ or $\|c_\rho\|\neq 0$, in which case directions of $c_\rho$ and $c_{\mu_s}$ are aligned. For both cases, we can assume that $c_\rho=\|c_\rho\|z$ and $c_{\mu_s}=\|c_{\mu_s}\|z$ for some unit vector $z\in\bbs^\dm$. 

Consider the map $f^t_z:\bbs^\dm\to\bbs^\dm$ defined for all $0\leq t<1$ by
\[
f^t_z(x)=\frac{x+tz}{\|x+tz\|}, \qquad\forall x\in\bbs^\dm,
\]
and the push-forward measure $(f_{z}^t)_\#\mu_s$ of $\mu_s$ under this map.
Then, we have
\begin{align*}
\langle  c_{(f_{z}^t)_\#\mu_s}, z\rangle&=\left\langle \int_{\bbs^\dm}x\d(f_{z}^t)_\#\mu_s(x), z\right\rangle
=\left\langle
\int_{\bbs^\dm}\frac{x+tz}{\|x+tz\|}\d\mu_s(x), z \right\rangle=\int_{\bbs^\dm}\frac{\langle x+tz, z\rangle}{\|x+tz\|}\d\mu_s(x),
\end{align*}
and by taking a derivative with respect to $t$ in the above, we find
\begin{equation}
\label{eqn:deriv-ip}
\frac{\d}{\d t}\langle  c_{(f_{z}^t)_\#\mu_s}, z\rangle=\int_{\bbs^\dm}\left(
\frac{1}{\|x+tz\|}-\frac{\langle x+tz, z\rangle^2 }{\|x+tz\|^3}
\right)\d\mu_s(x).
\end{equation}

Evaluate \eqref{eqn:deriv-ip} at $t=0$ to get
\[
\frac{\d}{\d t}\langle  c_{(f_{z}^t)_\#\mu_s}, z\rangle _{\big|_{t=0}}=\int_{\bbs^\dm}\left(
1-\langle x, z\rangle^2\right)\d\mu_s(x).
\]
If $\mu_s$ is not concentrated on $\{z,-z\}$, then $\frac{\d}{\d t}\langle  c_{(f_{z}^t)_\#\mu_s}, z\rangle_{\big|_{t=0}}>0$, and this implies
\begin{align*}
\frac{\d}{\d t}\bigl\|\alpha c_{(f_{z}^t)_\#\mu_s}+(1-\alpha)c_\rho\bigr\|^2_{\; \big|_{t=0}}
&=2\left\langle\alpha \frac{\d}{\d t}{c_{(f_{z}^t)_\#\mu_s}}_{\big|_{t=0}},(\alpha\|c_{\mu_s}\|+(1-\alpha)\|c_\rho\|)z\right\rangle\\[2pt]
&=2\alpha(\alpha\|c_{\mu_s}\|+(1-\alpha)\|c_\rho\|)\frac{\d}{\d t}\langle  c_{(f_{z}^t)_\#\mu_s}, z\rangle_{\big|_{t=0}} \\
&>0.
\end{align*}
Therefore, a perturbation $ \mu^t = \alpha (f_{z}^t)_\#\mu_s + (1-\alpha) \rho \dS $ of $\mu$ decreases its energy. 

Consider now the case when $\mu_s$ is concentrated on $\{z,-z\}$. Since we already assumed $\mu_s$ is not concentrated at one point, we can assume that $\alpha\mu_s=\alpha_1\delta_z+\alpha_2\delta_{-z}$, where $\alpha_1 \geq \alpha_2 >0$ and $\alpha=\alpha_1+\alpha_2$. Take a perturbation of $\alpha \mu_s$ given by
\[
\alpha\mu^t_s=\alpha_1\delta_z+\alpha_2\delta_{-(\cos t)z+(\sin t)w},\qquad\text{ for } 0\leq t\leq\frac{\pi}{2},
\]
for some $w\in\bbs^\dm$ that satisfies $\langle w,z \rangle =0$. Then, we have
\begin{align*}
\alpha c_{\mu_s^t}+(1-\alpha)c_\rho&=\alpha_1 z+\alpha_2\left(-(\cos t)z+(\sin t)w\right)+(1-\alpha)\|c_\rho\|z\\[2pt]
&=(\alpha_1-\alpha_2\cos t+(1-\alpha)\|c_\rho\|)z+\alpha_2(\sin t)w\\[2pt]
&=(\alpha\|c_{\mu_s}\|+\alpha_2(1-\cos t) +
(1-\alpha)\|c_\rho\|)z+\alpha_2(\sin t)w,
\end{align*}
where we used $\alpha_1-\alpha_2=\alpha\|c_{\mu_s}\|$. It yields
\[
\|\alpha c_{\mu_s^t}+(1-\alpha)c_\rho\|^2=(\alpha\|c_{\mu_s}\|+\alpha_2(1-\cos t) + (1-\alpha)\|c_\rho\|)^2+\alpha_2^2\sin^2t,
\]
and this function is increasing in $t$, for $0\leq t\leq \frac{\pi}{2}$. For this reason, a perturbation $\mu^t= \alpha \mu_s^t+ (1-\alpha)\rho\dS$ of $\mu$ decreases its energy, and we infer again that $\mu$ cannot be a ground state.

\section{Proof of Lemma \ref{lem:H-monotone}}
\label{appendix:H-monotone}

We will investigate the sign of $H'(\eta)$ for $\eta>1$. From integration by parts, we have
\begin{align*}
&\int_0^\pi (\eta-\cos\theta)^{\frac{1}{m-1}}\sin^{\dm-1}\theta \cos\theta \,\d\theta\\
&\qquad =\left[\frac{1}{\dm}(\eta-\cos\theta)^{\frac{1}{m-1}}\sin^{\dm}\theta \d\theta\right]_0^\pi-\frac{1}{\dm(m-1)}\int_0^\pi (\eta-\cos\theta)^{\frac{1}{m-1}-1}\sin^{\dm+1}\theta\d\theta\\
&\qquad =\frac{1}{\dm(1-m)}\int_0^\pi (\eta-\cos\theta)^{\frac{1}{m-1}-1}\sin^{\dm+1}\theta\d\theta, 
\end{align*}
which we use in \eqref{eqn:H} to rewrite $H$ as
\[
H(\eta)=  \frac{1}{\dm m} (\dm w_\dm)^{m-1} \left(\int_0^\pi (\eta-\cos\theta)^{\frac{1}{m-1}-1}\sin^{\dm+1}\theta\d\theta\right)
\left(\int_0^\pi (\eta-\cos\theta)^{\frac{1}{m-1}}\sin^{\dm-1}\theta \d\theta\right)^{m-2}.
\]

Then, the derivative of $\ln H(\eta)$ can be written as 
\begin{align*}
\frac{\d}{\d\eta}\ln H(\eta) & = \frac{H'(\eta)}{H(\eta)} \\
& =\left(\frac{2-m}{m-1}\right)\left(\frac{\int_0^\pi (\eta-\cos\theta)^{\frac{1}{m-1}-2}\sin^{\dm+1}\theta\d\theta}{\int_0^\pi (\eta-\cos\theta)^{\frac{1}{m-1}-1}\sin^{\dm+1}\theta\d\theta}\right) \\[3pt]
& \quad +\left(\frac{m-2}{m-1}\right)\left(
\frac{\int_0^\pi (\eta-\cos\theta)^{\frac{1}{m-1}-1}\sin^{\dm-1}\theta \d\theta}{\int_0^\pi (\eta-\cos\theta)^{\frac{1}{m-1}}\sin^{\dm-1}\theta \d\theta}
\right).
\end{align*}
Since $\frac{m-2}{m-1}>0$ for any $0<m<1$, the sign of $H'(\eta)$ is the same as the sign of
\begin{equation}
\label{eqn:H1}
\begin{aligned}
\mathcal{H}_1(\eta, m, \dm):=&\left(\int_0^\pi (\eta-\cos\theta)^{\frac{1}{m-1}-1}\sin^{\dm-1}\theta \d\theta\right)\left(
\int_0^\pi (\eta-\cos\theta)^{\frac{1}{m-1}-1}\sin^{\dm+1}\theta\d\theta
\right)\\[3pt]
&\quad - \left(\int_0^\pi (\eta-\cos\theta)^{\frac{1}{m-1}-2}\sin^{\dm+1}\theta\d\theta\right)\left(\int_0^\pi (\eta-\cos\theta)^{\frac{1}{m-1}}\sin^{\dm-1}\theta \d\theta\right).
\end{aligned}
\end{equation}

We will study the sign of $\mathcal{H}_1$ using tables of integrals that involve associated Legendre functions of the first kind, available in \cite{gradshteyn2014table}. From Section 3.666, equation (2) of \cite{gradshteyn2014table}, we have:
\[
\int_0^\pi (\cosh \beta+\sinh\beta \cos \theta)^{\mu+\nu}\sin^{-2\nu}\theta\d \theta=\frac{\sqrt{\pi}}{2^\nu}\sinh^{\nu}\beta~\Gamma\left(\frac{1}{2}-\nu\right)P_\mu^\nu(\cosh \beta)\quad\forall\mathrm{Re}(\nu)<\frac{1}{2},
\]
where $P_\mu^\nu$ denotes the associated Legendre function of the first kind. By factoring out $\sinh \beta$ from the l.h.s., the equation can be written as
\begin{equation}
\label{eqn:int-Legendre}
\int_0^\pi (\coth \beta+ \cos \theta)^{\mu+\nu}\sin^{-2\nu}\theta\d \theta=\frac{\sqrt{\pi}}{2^\nu}\sinh^{-\mu}\beta~\Gamma\left(\frac{1}{2}-\nu\right)P_\mu^\nu(\cosh \beta).
\end{equation}

Denote $\mu_0=\frac{1}{m-1}-2+\frac{\dm+1}{2}$, $\nu_0=-\frac{\dm+1}{2}$ and substitute $\eta=\coth\beta$ in \eqref{eqn:H1}, and use \eqref{eqn:int-Legendre} to simplify each integral in $\mathcal{H}_1(\eta, m, \dm)$ as follows:
\begin{align*}
\begin{cases}
\displaystyle\int_0^\pi (\eta-\cos\theta)^{\frac{1}{m-1}-1}\sin^{\dm-1}\theta \d\theta&= \displaystyle\frac{\sqrt{\pi}}{2^{\nu_0+1}}\sinh^{-\mu_0}\beta~\Gamma\left(\frac{1}{2}-(\nu_0+1)\right)P_{\mu_0}^{\nu_0+1}(\cosh \beta),
\vspace{0.2cm}
\\
\displaystyle\int_0^\pi (\eta-\cos\theta)^{\frac{1}{m-1}-1}\sin^{\dm+1}\theta\d\theta&=\displaystyle\frac{\sqrt{\pi}}{2^{\nu_0}}\sinh^{-(\mu_0+1)}\beta~\Gamma\left(\frac{1}{2}-\nu_0\right)P_{\mu_0+1}^{\nu_0}(\cosh \beta),
\vspace{0.2cm}
\\
\displaystyle\int_0^\pi (\eta-\cos\theta)^{\frac{1}{m-1}-2}\sin^{\dm+1}\theta\d\theta&=\displaystyle\frac{\sqrt{\pi}}{2^{\nu_0}}\sinh^{-\mu_0}\beta~\Gamma\left(\frac{1}{2}-\nu_0\right)P_{\mu_0}^{\nu_0}(\cosh \beta),
\vspace{0.2cm}
\\
\displaystyle\int_0^\pi (\eta-\cos\theta)^{\frac{1}{m-1}}\sin^{\dm-1}\theta \d\theta&=\displaystyle\frac{\sqrt{\pi}}{2^{\nu_0+1}}\sinh^{-(\mu_0+1)}\beta~\Gamma\left(\frac{1}{2}-(\nu_0+1)\right)P_{\mu_0+1}^{\nu_0+1}(\cosh \beta).
\vspace{0.2cm}
\end{cases}
\end{align*}
Hence, we can write $\mathcal{H}_1(\eta, m, \dm)$ as
\begin{align*}
\mathcal{H}_1(\eta, m, \dm)&=\frac{\pi}{2^{2\nu_0+1}}\sinh^{-(2\mu_0+1)}\beta \; \Gamma\left(\frac{1}{2}-\nu_0\right)\Gamma\left(-\frac{1}{2}-\nu_0\right)\\
&\qquad\times\left(P_{\mu_0}^{\nu_0+1}(\cosh \beta) P_{\mu_0+1}^{\nu_0}(\cosh \beta) -P_{\mu_0}^{\nu_0}(\cosh \beta)P_{\mu_0+1}^{\nu_0+1}(\cosh \beta)\right).
\end{align*}

The sign of $\mathcal{H}_1(\eta, m, \dm)$ is the same as the sign of 
\[
\mathcal{H}_2(\beta, m, \dm):= P_{\mu_0}^{\nu_0+1}(\cosh \beta) P_{\mu_0+1}^{\nu_0}(\cosh \beta)-P_{\mu_0}^{\nu_0}(\cosh \beta)P_{\mu_0+1}^{\nu_0+1}(\cosh \beta),
\]
where $\beta$ ranges in $(0,\infty)$.

Equation (1) in Section 8.715 of \cite{gradshteyn2014table} provides the following integral representation of the associated Legendre function of the first kind:
\[
P^\nu_\mu(\cosh\beta)=\frac{\sqrt{2}\sinh^\nu\beta}{\sqrt{\pi}\Gamma\left(\frac{1}{2}-\nu\right)}\int_0^\beta\frac{\cosh\left(\mu+\frac{1}{2}\right)t }{\left(\cosh \beta-\cosh t\right)^{\nu+\frac{1}{2}}}\d t.
\]
By using the integral expression above, we can rewrite $\mathcal{H}_2(\beta, m, \dm)$ as
\begin{align*}
\mathcal{H}_2(\beta, m, \dm)&=\frac{2\sinh^{2\nu_0+1}\beta}{\pi\Gamma\left(\frac{1}{2}-\nu_0\right)\Gamma\left(-\frac{1}{2}-\nu_0\right)}\\[3pt]
&\quad \times\bigg(
\int_0^\beta \frac{\cosh\left(\mu_0+\frac{1}{2}\right)t}{\left(\cosh \beta-\cosh t\right)^{\nu_0+\frac{3}{2}}}\d t 
\int_0^\beta \frac{\cosh\left(\mu_0+\frac{3}{2}\right)t}{\left(\cosh \beta-\cosh t\right)^{\nu_0+\frac{1}{2}}}\d t
\\[3pt]
&\qquad-\int_0^\beta \frac{\cosh\left(\mu_0+\frac{1}{2}\right)t}{\left(\cosh \beta-\cosh t\right)^{\nu_0+\frac{1}{2}}}\d t 
\int_0^\beta \frac{\cosh\left(\mu_0+\frac{3}{2}\right)t}{\left(\cosh \beta-\cosh t\right)^{\nu_0+\frac{3}{2}}}\d t
\bigg).
\end{align*}

Finally, we will investigate the sign of
\begin{align*}
\mathcal{H}_3(\beta, m, \dm):=&
\int_0^\beta \frac{\cosh\left(\mu_0+\frac{1}{2}\right)t}{\left(\cosh \beta-\cosh t\right)^{\nu_0+\frac{3}{2}}}\d t \int_0^\beta \frac{\cosh\left(\mu_0+\frac{3}{2}\right)t}{\left(\cosh \beta-\cosh t\right)^{\nu_0+\frac{1}{2}}}\d t
\\[3pt]
&\quad-\int_0^\beta \frac{\cosh\left(\mu_0+\frac{1}{2}\right)t}{\left(\cosh \beta-\cosh t\right)^{\nu_0+\frac{1}{2}}}\d t
\int_0^\beta \frac{\cosh\left(\mu_0+\frac{3}{2}\right)t}{\left(\cosh \beta-\cosh t\right)^{\nu_0+\frac{3}{2}}}\d t,
\end{align*}
for $\beta \in (0,\infty)$.
The sign of 
\[
A:= \mu_0+1 = \frac{1}{m-1}+\frac{\dm-1}{2}
\]
will be crucial in determining the sign of $\mathcal{H}_3$ (and hence, of $H$).

We express $\mathcal{H}_3$ using $A$ instead of $\mu_0$ as follows:
\begin{align*}
\mathcal{H}_3(\beta, m, \dm)=&
\int_0^\beta \frac{\cosh\left(A-\frac{1}{2}\right)t}{\left(\cosh \beta-\cosh t\right)^{\nu_0+\frac{3}{2}}}\d t \int_0^\beta \frac{\cosh\left(A+\frac{1}{2}\right)t}{\left(\cosh \beta-\cosh t\right)^{\nu_0+\frac{1}{2}}}\d t \\[3pt]
&\quad-\int_0^\beta \frac{\cosh\left(A-\frac{1}{2}\right)t}{\left(\cosh \beta-\cosh t\right)^{\nu_0+\frac{1}{2}}}\d t
\int_0^\beta \frac{\cosh\left(A+\frac{1}{2}\right)t}{\left(\cosh \beta-\cosh t\right)^{\nu_0+\frac{3}{2}}}\d t\\[3pt]
=& \int_0^\beta \frac{\cosh\left|A-\frac{1}{2}\right|t}{\left(\cosh \beta-\cosh t\right)^{\nu_0+\frac{3}{2}}}\d t \int_0^\beta \frac{\cosh\left|A+\frac{1}{2}\right|t}{\left(\cosh \beta-\cosh t\right)^{\nu_0+\frac{1}{2}}}\d t \\[3pt]
&\quad-\int_0^\beta \frac{\cosh\left|A-\frac{1}{2}\right|t}{\left(\cosh \beta-\cosh t\right)^{\nu_0+\frac{1}{2}}}\d t
\int_0^\beta \frac{\cosh\left|A+\frac{1}{2}\right|t}{\left(\cosh \beta-\cosh t\right)^{\nu_0+\frac{3}{2}}}\d t,
\end{align*}
where in the second equality we used that $\cosh(\cdot)$ is an even function. 

Now define, for $t >0$:
\[
\d \xi:=\left(\frac{\cosh\left|A+\frac{1}{2}\right|t}{\left(\cosh \beta-\cosh t\right)^{\nu_0+\frac{3}{2}}}\right)\d t, \quad f(t)=\cosh\beta-\cosh t,\quad g(t)=\frac{\cosh\left|A-\frac{1}{2}\right|t}{\cosh\left|A+\frac{1}{2}\right|t}.
\]
Hence, we write
\[
\mathcal{H}_3(\beta, m, \dm)=\int_0^\beta f(t)\d\xi\int_0^\beta g(t)\d\xi-\int_0^\beta f(t)g(t)\d\xi\int_0^\beta \d\xi.
\]

The function $f$ is decreasing on $t\geq 0$. Also, $g'$ can be calculated from
\[
\frac{g'(t)}{g(t)} =\left|A-\frac{1}{2}\right|\tanh\left|A-\frac{1}{2}\right|t-\left|A+\frac{1}{2}\right|\tanh\left|A+\frac{1}{2}\right|t,
\]
which is positive for $A<0$, negative for $A>0$ and zero for $A=0$. We distinguish between the following cases:

\noindent(Case 1: $A=0$) In this case, $g(t)=1$ and $\mathcal{H}_3\equiv0$.\\

\noindent(Case 2: $A>0$) Since $f$ and $g$ are both decreasing, $\mathcal{H}_3<0$ from the Chebyshev inequality.\\

\noindent(Case 3: $A<0$) For this case, since $f$ is decreasing and $g$ is increasing, $\mathcal{H}_3>0$ from the Chebyshev inequality.

Therefore, we can finally conclude that $H$ is increasing for $1-\frac{2}{\dm-1}<m<1$, decreasing for $0<m<1-\frac{2}{\dm-1}$, and constant for $m=1-\frac{2}{\dm-1}$, which concludes the proof of Lemma \ref{lem:H-monotone}.


\section{Proof of Lemma \ref{lem:Hlim}}
\label{appendix:Hlim}

We show that
\[
\lim_{ \eta\rightarrow \infty}H(-\eta)= \kappa_1^{-1},
\] 
where $\kappa_1$ is given by \eqref{eqn:kappa1} and the function $H$ by \eqref{eqn:H}. By factoring out $\eta$ from each of the terms in \eqref{eqn:H}, we have
\begin{equation}
\label{eqn:H-meta-1}
H(\eta) = \frac{1-m}{m} (\dm w_\dm)^{m-1} \eta
\left(\int_0^\pi \left(1- \frac{\cos\theta}{\eta} \right)^{\frac{1}{m-1}}\sin^{\dm-1}\theta \cos\theta\d\theta\right)
\left(\int_0^\pi \left(1-\frac{\cos\theta}{\eta} \right)^{\frac{1}{m-1}}\sin^{\dm-1}\theta\d\theta\right)^{m-2}.
\end{equation}

We will use Newton's generalized binomial theorem: for any $r\in \bbr$ and $x \in \bbr$ with $|x|<1$, it holds that
\begin{align*}
    (1+x)^r = \sum_{k=0}^\infty \binom{r}{k} x^k,
\end{align*}
where $ \binom{r}{k}= \frac{r(r-1)....(r-k+1)}{k!}$, with the convention $\binom{r}{0}=1$. Then, as the binomial series is uniformly convergent, we get from \eqref{eqn:H-meta-1}:
\begin{align}
H(\eta) &= \frac{1-m}{m} (\dm w_\dm)^{m-1}\eta
\left(\sum_{k=0}^{\infty}(-1)^k\binom{\frac{1}{m-1}}{k}\frac{1}{\eta^k} \int_0^\pi \cos^{k+1}\theta \sin^{d-1}\theta \d\theta\right) \\
&\qquad \times \left(\sum_{k=0}^{\infty}(-1)^k\binom{\frac{1}{m-1}}{k}\frac{1}{\eta^k} \int_0^\pi \cos^k\theta \sin^{d-1}\theta \d\theta\right)^{m-2}\\
&= \frac{1-m}{m} (\dm w_\dm)^{m-1} 
\left(\sum_{k=1}^{\infty}(-1)^k \binom{\frac{1}{m-1}}{k}\frac{1}{\eta^{k-1}} \int_0^\pi \cos^{k+1}\theta \sin^{d-1}\theta \d\theta\right) \\
&\qquad \times \left(\sum_{k=0}^{\infty}(-1)^k \binom{\frac{1}{m-1}}{k}\frac{1}{\eta^k} \int_0^\pi \cos^k\theta \sin^{d-1}\theta \d\theta\right)^{m-2},
\end{align}
where for the second equal sign we used that the $k=0$ term in the first binomial sum, vanishes. 

Then, in the limit $\eta$ tends to infinity, we find
\begin{align} 
\lim_{\eta\rightarrow\infty} H(\eta) &= \frac{1-m}{m} (dw_d)^{m-1} \left(-\binom{\frac{1}{m-1}}{1}\int_0^\pi \cos^2\theta\sin^{d-1}\theta \d\theta\right)\left(\binom{\frac{1}{m-1}}{0}\int_0^\pi\sin^{d-1}\theta\right)^{m-2}\\
&= \frac{1-m}{m} (dw_d)^{m-1} \frac{1}{1-m} \times \frac{1}{\dm+1} \left(\int_0^\pi\sin^{d-1}\theta\right)^{m-1}\\
&= \frac{|\bbs^\dm|^{m-1}}{m(d+1)},
\end{align}
where for the last equal sign we used 
\[
|\bbs^{\dm}| = |\bbs^{\dm-1}| \int_0^\pi \sin^{\dm-1} \theta\d\theta,
\]
and $|\bbs^{\dm-1}| = \dm w_\dm$. This proves the claim.


\section{Derivation of $\kappa_2$}
\label{appendix:kappa2}

We will derive the explicit expression of $\kappa_2$ given in Remark \ref{kappa2-exp}. We calculate $H(1)$ first (see \eqref{eqn:H} and \eqref{eqn:kappa2}):
\begin{equation}
\label{eqn:H1-calc}
\begin{aligned}
H(1)&=\frac{1-m}{m} (\dm w_\dm)^{m-1} \frac{\int_0^\pi (1-\cos\theta)^{\frac{1}{m-1}}\sin^{\dm-1}\theta \cos\theta\d\theta}
{\left(\int_0^\pi (1-\cos\theta)^{\frac{1}{m-1}}\sin^{\dm-1}\theta \d\theta\right)^{2-m}}\\
&=\frac{1-m}{m} (\dm w_\dm)^{m-1} \frac{\int_0^\pi 2^{\frac{1}{m-1}}\sin^{\frac{2}{m-1}}\left(\frac{\theta}{2}\right)\sin^{\dm-1}\theta \cos\theta\d\theta}
{\left(\int_0^\pi 2^{\frac{1}{m-1}}\sin^{\frac{2}{m-1}}\left(\frac{\theta}{2}\right)\sin^{\dm-1}\theta \d\theta\right)^{2-m}}\\
&=2\left(\frac{1-m}{m}\right) (\dm w_\dm)^{m-1} \frac{\int_0^\pi \sin^{\frac{2}{m-1}}\left(\frac{\theta}{2}\right)\sin^{\dm-1}\theta \cos\theta\d\theta}
{\left(\int_0^\pi\sin^{\frac{2}{m-1}}\left(\frac{\theta}{2}\right)\sin^{\dm-1}\theta \d\theta\right)^{2-m}}.
\end{aligned}
\end{equation}

Using the identity
\begin{equation}
\label{eqn:Gamma-prop}
\int_0^\pi\cos^a\left(\frac{\theta}{2}\right)\sin^b\left(\frac{\theta}{2}\right)\d\theta=\frac{\Gamma\left(\frac{a+1}{2}\right)\Gamma\left(\frac{b+1}{2}\right)}{\Gamma\left(\frac{a+b+2}{2}\right)},
\end{equation}
the property $\Gamma(z+1)=z\Gamma(z)$ for all $z>0$, and the elementary trig identities
\[
\sin(\theta) = 2 \sin \left( \frac{\theta}{2}\right) \cos \left( \frac{\theta}{2} \right),\qquad \cos \theta = 2 \cos^2 \left( \frac{\theta}{2} \right)-1,
\]
we can simplify the numerator and denominator in \eqref{eqn:H1-calc} as
\begin{align*}
&\int_0^\pi \sin^{\frac{2}{m-1}}\left(\frac{\theta}{2}\right)\sin^{\dm-1}\theta \cos\theta\d\theta\\
=&2^{\dm-1}\int_0^\pi \left(\sin^{\frac{2}{m-1}+\dm-1}\left(\frac{\theta}{2}\right)\cos^{\dm+1}\left(\frac{\theta}{2}\right)-\sin^{\frac{2}{m-1}+\dm+1}\left(\frac{\theta}{2}\right)\cos^{\dm-1}\left(\frac{\theta}{2}\right)\right)\d\theta\\
=&2^{\dm-1}\left(\frac{\Gamma\left(\frac{1}{m-1}+\frac{\dm}{2}\right)\Gamma\left(\frac{\dm+2}{2}\right)-\Gamma\left(\frac{1}{m-1}+\frac{\dm+2}{2}\right)\Gamma\left(\frac{\dm}{2}\right)}{\Gamma\left(\frac{1}{m-1}+\dm+1\right)}\right)\\
=&2^{\dm-1}\left(\frac{\Gamma\left(\frac{1}{m-1}+\frac{\dm}{2}\right)\Gamma\left(\frac{\dm}{2}\right)}{\Gamma\left(\frac{1}{m-1}+\dm+1\right)}\right)\left(\frac{\dm}{2}-\left(\frac{1}{m-1}+\frac{\dm}{2}\right)\right)\\
=&\frac{2^{\dm-1}\Gamma\left(\frac{1}{m-1}+\frac{\dm}{2}\right)\Gamma\left(\frac{\dm}{2}\right)}{(1-m)\Gamma\left(\frac{1}{m-1}+\dm+1\right)},
\end{align*}
and
\begin{align*}
\int_0^\pi \sin^{\frac{2}{m-1}}\left(\frac{\theta}{2}\right)\sin^{\dm-1}\theta \d\theta
&=2^{\dm-1}\int_0^\pi \sin^{\frac{2}{m-1}+\dm-1}\left(\frac{\theta}{2}\right)\cos^{\dm-1}\left(\frac{\theta}{2}\right)\d\theta\\
&=\frac{2^{\dm-1}\Gamma\left(\frac{1}{m-1}+\frac{\dm}{2}\right)\Gamma\left(\frac{\dm}{2}\right)}{\Gamma\left(\frac{1}{m-1}+\dm\right)}.
\end{align*}

Then, we can simplify $H(1)$ from \eqref{eqn:H1-calc} as follows:
\begin{align*}
H(1)&=2\left(\frac{1-m}{m}\right) (\dm w_\dm)^{m-1} \frac{\left(
\frac{2^{\dm-1}\Gamma\left(\frac{1}{m-1}+\frac{\dm}{2}\right)\Gamma\left(\frac{\dm}{2}\right)}{(1-m)\Gamma\left(\frac{1}{m-1}+\dm+1\right)}\right)}
{
\left(
\frac{2^{\dm-1}\Gamma\left(\frac{1}{m-1}+\frac{\dm}{2}\right)\Gamma\left(\frac{\dm}{2}\right)}{\Gamma\left(\frac{1}{m-1}+\dm\right)}
\right)^{2-m}
}\\
&=\frac{2^{1+(\dm-1)(m-1)}}{m\left(\frac{1}{m-1}+\dm\right)} (\dm w_\dm)^{m-1} \left(
\frac{\Gamma\left(\frac{1}{m-1}+\frac{\dm}{2}\right)\Gamma\left(\frac{\dm}{2}\right)}{\Gamma\left(\frac{1}{m-1}+\dm\right)}\right)^{m-1}.
\end{align*}
Finally, use $|\bbs^\dm|=\frac{\dm w_\dm\Gamma\left(\frac{1}{2}\right)\Gamma\left(\frac{\dm}{2}\right)}{\Gamma\left(\frac{\dm+1}{2}\right)}$ to write the above expression as
\[
H(1)=\frac{2^{1+(\dm-1)(m-1)}|\bbs^\dm|^{m-1}}{m\left(\frac{1}{m-1}+\dm\right)}\left(\frac{\Gamma\left(\frac{1}{m-1}+\frac{\dm}{2}\right)\Gamma\left(\frac{\dm+1}{2}\right)}{\Gamma\left(\frac{1}{2}\right)\Gamma\left(\frac{1}{m-1}+\dm\right)}\right)^{m-1}.
\]
The explicit expression for $\kappa_2$ listed in Remark \ref{kappa2-exp}, follows then from $\kappa_2=H(1)^{-1}$.


\section{Calculation of $\srho$}
\label{appendix:s}
We calculate from \eqref{eqn:srho-kgk2} using simple trig identities:
\begin{align*}
\srho &=1-\frac{\int_0^\pi (1-\cos\theta)^{\frac{1}{m-1}}\sin^{\dm-1}\theta (1-\cos\theta)\d\theta}{\int_0^\pi (1-\cos\theta)^{\frac{1}{m-1}}\sin^{\dm-1}\theta\d\theta}\\[5pt]
&=1-\frac{\int_0^\pi \left(2\sin^2\frac{\theta}{2}\right)^{\frac{1}{m-1}+1}\left(2\sin\frac{\theta}{2}\cos\frac{\theta}{2}\right)^{\dm-1} \d\theta}{\int_0^\pi \left(2\sin^2\frac{\theta}{2}\right)^{\frac{1}{m-1}}\left(2\sin\frac{\theta}{2}\cos\frac{\theta}{2}\right)^{\dm-1}\d\theta}\\[5pt]
&=1-2 \, \frac{\int_0^\pi \sin^{\frac{2}{m-1}+\dm+1}\frac{\theta}{2}\cos^{\dm-1}\frac{\theta}{2}\d\theta}{\int_0^\pi \sin^{\frac{2}{m-1}+\dm-1}\frac{\theta}{2}\cos^{\dm-1}\frac{\theta}{2}\d\theta}.
\end{align*}
Then, using the Gamma function and its properties (see \eqref{eqn:Gamma-prop}) we find
\begin{align*}
\srho &=1-2 \, \frac{\frac{\Gamma\left(\frac{1}{m-1}+\frac{\dm}{2}+1\right)\Gamma\left(\frac{\dm}{2}\right)}{\Gamma\left(\frac{1}{m-1}+\dm+1\right)}}{\frac{\Gamma\left(\frac{1}{m-1}+\frac{\dm}{2}\right)\Gamma\left(\frac{\dm}{2}\right)}{\Gamma\left(\frac{1}{m-1}+\dm\right)}} \\[3pt]
&=1-2 \, \frac{\frac{1}{m-1}+\frac{\dm}{2}}{\frac{1}{m-1}+\dm} \\
&=1-\frac{2+\dm(m-1)}{1+\dm(m-1)}\\[3pt]
&=\frac{1}{(1-m)\dm-1}.
\end{align*}


\bibliographystyle{abbrv}
\def\url#1{}
\bibliography{lit-H.bib}

\end{document}